\documentclass[11pt]{amsart}              \parindent0pt
\title{Smoothing of singular intersections of ellipsoids: pyramitoid }    \author[E. Artal]{Enrique Artal Bartolo}
\address[E.~Artal]{Departamento de Matem\'{a}ticas, IUMA \\
Universidad de Zaragoza \\
C.~Pedro Cerbuna 12, 50009, Zaragoza, Spain}
\urladdr{\url{http://riemann.unizar.es/~artal}}
\email{\href{mailto:artal@unizar.es}{artal@unizar.es}}

\author[S. L\'{o}pez de Medrano]{Santiago L\'{o}pez de Medrano}
\address[S. L\'{o}pez de Medrano]{Instituto de Matem\'{a}ticas\\
Universidad Nacional Aut\'{o}noma de M\'{e}xico\\
04510 Ciudad de M\'{e}xico, M\'{e}xico}
\email{\href{mailto:santiago@matem.unam.mx}{santiago@im.unam.mx}}

\author[M.T. Lozano]{Mar\'{\i}a Teresa Lozano}
\address[M.T. Lozano]{Departamento de Matem\'{a}ticas, IUMA \\
Universidad de Zaragoza \\
C.~Pedro Cerbuna 12, 50009, Zaragoza, Spain}
\email{\href{mailto:tlozano@unizar.es}{tlozano@unizar.es}}
\date{\today}

\thanks{This research was partially supported by PID2024-156181NB-C33 funded by MICIU/AEI/ 10.13039/501100011033
and by FEDER, UE. The two first named authors were also partially supported by UNAM--Papiit grant IN106324.}

\usepackage{graphicx, amssymb,amsmath}
\usepackage[T1]{fontenc}
\usepackage[utf8]{inputenc}
\usepackage{enumitem}
\usepackage{hyperref}
\usepackage[labelfont=rm]{caption}
\usepackage[labelformat=simple]{subcaption}
\usepackage{relsize}
\usepackage{tikz}

\usepackage{pgf}
\usepackage{xcolor}

\usepackage{pgfplots}
\usepackage{tikz-3dplot}

\tdplotsetmaincoords{60}{115}

\usepgfplotslibrary{patchplots}

\usetikzlibrary{shapes,cd}
\pgfplotsset{compat=newest}

\newcommand{\connectedsum}{\mathop{\mathlarger{\mathlarger{\text{\Large\#}}}}}

\newcommand{\orb}{\text{\rm orb}}
\DeclareMathOperator{\iso}{Iso}

\newtheorem{thm}{Theorem}[section]
\newtheorem{proposition}[thm]{Proposition}
\newtheorem{coro}[thm]{Corollary}
\newtheorem{lema}[thm]{Lemma}
\theoremstyle{definition}
\newtheorem{dfn}[thm]{Definition}
\newtheorem{ejm}[thm]{Example}
\theoremstyle{remark}
\newtheorem{remark}[thm]{Remark}

\newcommand{\esencial}[1]{\mathcal{E}_{(#1)}}
\newcommand{\esencialp}[1]{\tilde{\mathcal{E}}_{(#1)}}
\newcommand{\alma}[1]{\mathcal{C}_{(#1)}}
\newcommand{\almap}[1]{\tilde{\mathcal{C}}_{(#1)}}
\newcommand{\domo}[1]{l\mathbf{Y}_{#1}}
\newcommand{\orbdomo}[1]{l\mathbf{Y}^{\orb}_{#1}}
\newcommand{\cw}[1]{\mathcal{CW}_{(#1)}}
\newcommand{\ct}[1]{\mathcal{T}_{(#1)}}
\newcommand{\bipy}[2]{b\mathbf{#1}_{#2}}
\newcommand{\girobipy}[2]{gb\mathcal{#1}_{#2}}
\newcommand{\sgirobipy}[2]{sgb\mathcal{#1}_{#2}}

\numberwithin{equation}{section}

\dedicatory{In memory of Fred Cohen}

\begin{document}
\begin{abstract}
  The goal of this work is to continue the study the smoothings of 3\nobreakdash-dimen\-sional manifolds with singularities obtained as real-moment angle manifolds of non simple right-angled Coxeter polyhedral orbifolds. They appear in the study of coaxial intersections of ellipsoids. In particular
we introduce the concept of $n$-pyramitoid generalizing the $n$-pyramid.  \end{abstract}

\maketitle

\tableofcontents

\section*{Introduction}

The surprising discovery in 2007 of the close relationship between the new theory of polyhedral products developed by Fred and his
coworkers~\cite{BBCG} and the study of the topology of intersections
of coaxial ellipsoids initiated by several authors some years earlier (see~\cite{LdM:21}) gave a new impulse to the second theory through the participation of algebraic topologists (where one of the most enthusiastic promoters was Fred himself) and through the appearance of new lines of research.

One of these lines is the study of the topology and geometry of the 3-dimensional cases initiated by the present authors, beginning with the study of the manifold associated to the dodecahedron~\cite{ALdML:16}, equivalent to the study of the polyhedral product associated with the icosahedron, and other related 3-manifolds. More recently, in~\cite{ALdeML2025}, we have studied the case of the octahedron which (not being a simple polyhedron) produces a manifold with singularities, but gives rise to other polyhedral products when they are smoothed.

Our goal in the present paper is to provide smoothings of the isolated singularities of $3$-dimensional intersections of coaxial ellipsoids, in particular, the ones coming from pyramids.

 The polyhedron $P$ associated to an intersection of coaxial ellipsoids is given by a set of linear equations coming from  the quadratic equations defining the ellipsoids. If $P$ has a
 non-simple vertex $v$ (i.e., of valence bigger than 3),
 then the intersection variety $Z(P)$ has singular points. A suitable small deformation of the equations produces a \emph{smoothing} of the variety. This smoothing consists of small changes on the polyhedron  at the  neighbourhoods of the non-simple vertices, in such a way that no new faces are created and that  the vertices become simple (valence 3). This change is  also called a \emph{smoothing} of $P$. Deformations of smaller collections of vertices (which give non-simple polyhedra and singular intersections) can also be considered.

  The $n$-pyramid  $\mathcal{Y}_n$ (that is, the pyramid with basis the $n$-gon) is
  associated with an intersection $Z(\mathcal{Y}_n)$ of ellipsoids
  in  $\mathbb{R}^{n+1}$. If $n>3$, $Z(\mathcal{Y}_n)$ has a singular isolated singularity. The case $n=4$ with a generic singularity was studied in \cite{ALdeML2025} and here we extend this study to the general case $n>4$. Every smoothing of the $n$-pyramid $\mathcal{Y}_n$ is a simple polyhedron with $n+1$ faces, one of them being a polygon of $n$ edges. So we still have a polyhedron with $n+1$ faces. The vertices in the basis of the pyramid are all simple, so they are not affected by the smoothing operation. And the basis is still an $n$-polygon that has an edge in common with any of the remaining faces. The same happens in the intermediate steps of a smoothing.

A polyhedron $P$ associated to a smooth intersection of ellipsoids carries  on an orbifold structure. Each face is a mirror face and two faces intersect in an edge with dihedral angle~$\frac{\pi}{2}$. The orbifold fundamental group $\pi _1^{\orb}(P)$, see~\S\ref{sec:graphs}, has the reflections on the faces $\{ x_1,...,x_n\}$ as generators, where $n$ is the number of faces of $P$, with the relations $x_i^2=1$. Each edge $l_{ij}$, intersection of the faces $i$ and $j$, give rise to a relation $x_ix_j=x_jx_i$. This is a  right-angled Coxeter group (\emph{RACG}). The \emph{defining graph} $G_P$ that associates  a vertex to every generator and an edge to each commutation relation\footnote{The defining graph is not the ``Coxeter graph''. The Coxeter graph has the same
vertex set~$V$; however, the edge set only considers edges for each pair $\{x_i,x_j\}$ such that $(x_i,x_j)^m =1,\,m>2$~\cite{Davis1983} } coincides with the $1$-skeleton of the dual polyhedron of $P$. Observe that the defining graph $G_{\mathcal{Y}_n}$ of the $n$-pyramid $\mathcal{Y}_n$ is isomorphic to the $1$-skeleton of its dual polytope which turns out to be an $n$-pyramid. Since we are interested in pyramids because they appear as truncations
around non-simple vertices, the orbifold structure where the basis is not a mirror is also interesting. Their smoothings
are handlebodies and we will relate them to Heegaard splittings.

The paper is organized as follows: In \S\ref{sec:graphs} we recall the concept of right-angled Coxeter group, RACG, associated to a finite simplicial graph $\Gamma$. We explain that, if  this defining graph $\Gamma$ is the dual graph of the $1$-skeleton of a polyhedron, $P$, an orbifold structure  in $P$ is defined.  In \S\ref{sec:pyramitoid} we define and study a class of polyhedra called \emph{pyramitoids} where a face (the basis) has a edge in common with every one of the others faces. The $2$-skeleton of the pyramitoid without the basis constitutes the \emph{dome}. The right-angled orbifold structure on pyramitoids are analyzed in \S\ref{sec:orbifolds}. Being $\mathbf{Y}_n$ a simple $n$-pyramitoid, a smoothing of the $n$-pyramid, the \emph{real moment-angle manifold} $Z(\mathbf{Y}_n)$ is obtained (Theorem \ref{tzpn}). We also consider the orbifold structure on $\mathbf{Y}_n$ where only the faces of the dome as mirror faces. We proof that its real moment-angle manifold is a handleboby $H_n$. Some graphs associated to the pyramitoid permit to obtain the core and a set of meridian discs of  $H_n$. In \S\ref{sec:bipyramitoid} bipyramitoids, the polyhedron obtained by the identification by the basis face of two $n$-pyramitoids are studied. By construction, a Heegaard splitting  of its real moment-angle manifold can be analyzed. Some examples are given. Some comments on the general case are given in \S\ref{sec:general}.

\section{Graphs, groups and orbifolds}\label{sec:graphs}

In this work, a graph $\Gamma$ is a finite simplicial graph, which is determined by a finite set
$V:=V(\Gamma)$, the set of \emph{vertices}, and a subset $E:=E(\Gamma)\subset\{e\subset V\mid \#e=2\}$,
the set of \emph{edges}. Sometimes we will put some extra structure in these graphs, e.g., a
weight on $V$ or $E$.

Given a graph $\Gamma$ with a weight $w:E\to\mathbb{Z}_{>1}$. We can associate some
groups to this weighted graph. The Artin group relative to $(\Gamma,w)$ is
\[
A(\Gamma,w):=
\langle
v\in V
\mid
\underbrace{v_1\cdot v_2\cdot v_1\cdot v_2\cdot\ldots}_{w(e)\text{ factors}} = \underbrace{v_2\cdot v_1\cdot v_2\cdot v_1\cdot\ldots}_{w(e)\text{ factors}}
\text{ for }e=\{v_1,  v_2\}\in E
\rangle.
\]
The Coxeter group $C(\Gamma,w)$ is defined in the same way, adding the relations $v^2=1$, for $v\in V$.
When $w$ is the constant function with value~$2$, the function will be dropped from the notation and
the groups $A(\Gamma), C(\Gamma)$ are the \emph{right angle} Artin (or Coxeter) groups, $RAAG$ (or $RACG$), associated to $\Gamma$.
The graph $\Gamma$ is called the \emph{defining graph} of $A(\Gamma)$ and $C(\Gamma)$.

The defining graph~$\Gamma$ is not what is called in the literature a Coxeter graph $\Gamma_C$. For the Coxeter graph
the weight is defined as $w_C:E_C\to\mathbb{Z}_{>2}\cup\{\infty\}$. The sets of vertices coincide but
\[
E_C:=\{e\in E\mid w(e)>2\}\cup\{e\subset V\mid \#e=2, e\notin E\},\quad
w_C(e):=
\begin{cases}
w(e)&\text{if }e\in E,\\
\infty&\text{otherwise}.
\end{cases}
\]

Orbifolds will appear in a natural way in this work. An orbifold is defined as a pair $X^\orb:=(X,\mathcal{A})$
where $X$ is a topological space and $\mathcal{A}$ is a \emph{maximal atlas} of \emph{orbifold charts}.
A chart is a $4$-tuple $(U,\tilde{U},\phi, G)$,  $U$ is an open set of $X$ and $\tilde{U}$ is an open set of either $\mathbb{R}^n$ or $\mathbb{R}^{n-1}\times\mathbb{R}_{\geq 0}$,
$\psi:\tilde{U}\to U$ is continuous, $G$ is a finite group of linear isomorphisms of $\mathbb{R}^n$ fixing~$\tilde{U}$
such that there is a homeomorphism $\tilde{\phi}:\tilde{U}/G\to U$ fitting in the following commutative diagram:
\[
\begin{tikzcd}
&\tilde{U}\ar[ld, "\pi_G" above left]\ar[rd, "\phi"]&\\
\tilde{U}/G\ar[rr,"\tilde{\phi}"]&&U.
\end{tikzcd}
\]
These charts have natural compatibility properties. In some cases we may ask that the compatibility properties live
in the differentiable category, or in the analytic category if the groups are formed by linear automorphisms of $\mathbb{C}^m$.
It is not hard to see that given $p\in X$ there is a chart such that $\#\phi^{-1}(p)=1$; for any such chart
the acting groups are conjugate and its conjugacy class can be denoted by $\iso_p$, the \emph{isotropy group of}~$p$.

We are interested in a special class
of orbifolds.

\begin{dfn}
An orbifold $X^\orb$
is a \emph{right-angled orbifold} if for any
chart the group $G$ is isomorphic to
$(\mathbb{Z}/2)^k$, for some $0\leq k\leq n$,
acting by reflections on $k$ coordinate hyperplanes.
If for some $p\in X$, $\iso_p=(\mathbb{Z}/2)^k$, we say that $p$ is a \emph{smooth point} if $k=0$,
a \emph{mirror point} if $k=1$, and a \emph{$k$-singular point} if $k\geq 2$. In dimension~$2$,
a $2$-singular point is called a corner reflector of order~$2$.
\end{dfn}

We start with two examples
$X_1^\orb, X_2^\orb$ of $1$-dimensional
right-angled orbifolds where $X_1=X_2=[-1, 1]\subset \mathbb{R}$. For $X_1^\orb$, $\pm 1$ are mirror points,
and for $X_2$ only $-1$ is mirror point.

Orbifold fundamental groups can be defined as the usual ones, either as \emph{homotopy classes} of
\emph{orbifold} loops~\cite{HDu} or as the group of deck transformations of the universal cover of an orbifold~\cite{bmp:03}.

Using an orbifold version of the Seifert-van Kampen Theorem, we have that
$\pi_1^{\orb}(X_1^\orb; 0)$ is isomorphic to $\mathbb{Z}/2*\mathbb{Z}/2$
(resp. $\pi_1^{\orb}(X_2^\orb; 0)\cong\mathbb{Z}/2$),
which is the
$RACG$ with defining graph a set of two vertices
(one vertex).

\begin{ejm}\label{ejm:dim2}
Given $n\geq 2$, we can consider
a (combinatorial) $n$-gon $p_n$ in $\mathbb{R}^2$ (the edges may not be linear). We consider
a right-angled orbifold structure
$X_1^\orb$ where the open edges in the boundary are mirrors and the vertices
are corner reflectors of order~$2$. It is not hard
to see that $\pi_1^{\orb}(X_1^\orb)$
is the
$RACG$ with defining graph $\Gamma_n$, the dual graph of $\partial p_n$.
Note that for $n>2$, $\Gamma_n$ is the boundary of an $n$-polygon~$q_n$.

We consider another right-angled orbifold structure $X_2^\orb$ on $p_n$ where
one of the open edges is formed by smooth boundary
points (and its vertices are mirror boundary points). It is not hard
to see that $\pi_1^{\orb}(X_2^\orb)$
is the
$RACG$ with defining graph $\check{\Gamma}_n$
(obtained from $\Gamma_n$ erasing one vertex
and its surrounding edges).
\begin{figure}[ht]
    \centering
    \begin{tikzpicture}
    \foreach \a in {0, ...,7}
    {
    \coordinate (A\a) at ({\a*72-54}:1);
    }
    \filldraw[fill=gray!20!white, draw=white] (A0) -- (A1) -- (A2) -- (A3) -- (A4) -- cycle;
    \foreach \a[evaluate={\b=\a+1}] in {0, ..., 4}
    {
    \draw[blue, line width=1.2pt] (A\a) -- (A\b);
    \fill[red] (A\a) circle[radius=.1cm];
    }
    \node at (-1,1) {$X_1^\orb$};
    \foreach \a[evaluate={\b=\a+1}] in {0, ..., 6}
    {
    \coordinate (B\a) at ($.5*(A\a)+.5*(A\b)$);
    }
    \foreach \a[evaluate={\b=\a+1}] in {0, ..., 5}
    {
    \fill[green!80!black] (B\a) circle[radius=.1cm];
    \draw[green!80!black, line width=1.2pt] (B\a) -- (B\b);
    }
     \node[green!80!black, right] at (-27:1) {$\Gamma_n$};
     \begin{scope}[xshift=5cm]
         \foreach \a in {0, ...,7}
    {
    \coordinate (A\a) at ({\a*72-54}:1);
    }
    \filldraw[fill=gray!20!white, draw=white] (A0) -- (A1) -- (A2) -- (A3) -- (A4) -- cycle;
    \foreach \a[evaluate={\b=\a+1}] in {0, ..., 3}
    {
    \draw[blue, line width=1.2pt] (A\a) -- (A\b);
    }
    \foreach \a in {1, ..., 3}
    {
    \fill[red] (A\a) circle[radius=.1cm];
    }
    \draw[brown, line width=1.2pt] (A4) -- (A5);
    \fill[magenta] (A0) circle[radius=.1cm];
    \fill[magenta] (A4) circle[radius=.1cm];
    \node at (-1,1) {$X_2^\orb$};
    \foreach \a[evaluate={\b=\a+1}] in {0, ..., 6}
    {
    \coordinate (B\a) at ($.5*(A\a)+.5*(A\b)$);
    }
    \foreach \a[evaluate={\b=\a+1}] in {0, ..., 2}
    {
    \draw[green!80!black, line width=1.2pt] (B\a) -- (B\b);
    }
    \foreach \a[evaluate={\b=\a+1}] in {0, ..., 3}
    {
    \fill[green!80!black] (B\a) circle[radius=.1cm];
    }
     \node[green!80!black, right] at (-27:1) {$\check{\Gamma}_n$};
     \end{scope}
    \end{tikzpicture}
    \caption{$2$-dimensional orbifolds $X_1^\orb$, $X_2^\orb$ for $n=5$.}
    \label{fig:racg2}
\end{figure}
\end{ejm}

\begin{ejm}\label{ejm:dim3}
Let $P$ be a combinatorial simple polyhedron in $\mathbb{R}^3$ with $n$ faces.
 We can associate a right-angled orbifold $P^\orb$ structure where interior points in  $P$ are smooth points, interior points in each face are  mirror points,  points in the interior of each edge are corner reflector points and vertices are $3$-singular points. The orbifold fundamental group $\pi _1^{\orb}(P^\orb))$ has $n$ generators $\{ x_1,...,x_n\}$ associated to reflections
 on the faces of~$P$; as they come from reflections they satisfy the relations $x_i^2=1$. Each edge $l_{ij}$ is the intersection of the faces $i$ and $j$, and gives rise to a relation $x_ix_j=x_jx_i$. Hence, $\pi _1^{\orb}(P^\orb))$ is a
$RACG$. The defining graph $G_P$  coincides with the $1$-skeleton $\Gamma_P$ of the dual polytope of $P$: one vertex for each $2$-face and one edge for each $1$-face.

\begin{figure}[ht]
    \centering
    \begin{tikzpicture}
    \coordinate (A) at (-150:1);
    \coordinate (B) at (-30:1);
    \coordinate (C) at (90:1);
    \coordinate (D) at (0, 0);
    \draw (A) -- (B) -- (C) -- cycle;
    \foreach \x in {A, B, C}
    {
    \draw[dashed] (D) -- (\x);
    }
    \fill (A) circle[radius=.1cm];
    \fill (B) circle[radius=.1cm];
    \fill (C) circle[radius=.1cm];
    \fill[red] (D) circle[radius=.1cm];
     \begin{scope}[xshift=5cm]
    \coordinate (A) at (0:1);
    \coordinate (B) at (90:1);
    \coordinate (C) at (180:1);
    \coordinate (D) at (-90: 1);
    \coordinate (P) at (0,0);
    \coordinate (Q) at (2, 0);
    \draw (A) -- (B) -- (C) -- (D) -- cycle;
    \foreach \x in {A, B, C, D}
    {
    \draw (P) -- (\x);
    }
    \foreach \x in {A, B, D}
    {
    \draw[dashed] (Q) -- (\x);
    }
    \draw[dashed] (Q) to[out=90, in=0] ($1.25*(B)$) to[out=180, in=90] (C);
    \fill (A) circle[radius=.1cm];
    \fill (B) circle[radius=.1cm];
    \fill (C) circle[radius=.1cm];
    \fill (D) circle[radius=.1cm];
    \fill (P) circle[radius=.1cm];
    \fill[red] (Q) circle[radius=.1cm];
     \end{scope}
    \end{tikzpicture}
    \caption{Defining graphs of $G_P$ for the tetrahedron and the cube. Red vertices and dashed edges
    stand for the edges and vertices to be eliminated when considering the defining graph
    for $\pi _1^{\orb}(P_n^\orb)$.}
    \label{fig:racg3}
\end{figure}

Fix one face, say the one associated to $x_n$. We  can consider another orbifold structure $P_n^\orb$ where the interior points
 of $x_n$ are boundary points. For the interior points of the edges (resp. vertices) of $x_n$, the \emph{chart} is given by
 $\mathbb{R}^2\times\mathbb{R}_\geq 0$ and the group is acting by the reflection on $x_2=0$ (resp. the reflections
 on $x_1=0$ and $x_2=0$).
\end{ejm}

\begin{remark}\label{rem:Z}
    A simple $3$-polytope $P$ (with $n$ faces) and its orbifold structure $P^\orb$ are associated to an intersection of ellipsoids.
    There are two ways to understand this association.

    From the orbifold point of view, let us consider the epimorphism $\pi_1^\orb(P^\orb)\to(\mathbb{Z}/2)^n$,
    which sends the generators associated to the faces to the canonical basis of $(\mathbb{Z}/2)^n$. This epimorphism
    induces a orbifold Galois cover $Z(P)\to P^\orb$ such that $Z(P)$ is a manifold and it is homeomorphic
    to an intersection of ellipsoids. The manifold $Z(P)$ is called the \emph{real moment-angle manifold} of~$P$, i.e a manifold
    admitting a quotient by $(\mathbb{Z}/2)^n$, such that $Z(P)/(\mathbb{Z}/2)^n\cong P$, and $n$ is the number
    of faces of $P$. This is related with the concept of \emph{small cover} of $P$, introduced by Davis and Januszkiewicz in \cite{DJ1991}, where $(\mathbb{Z}/2)^n$ is replaced by $(\mathbb{Z}/2)^3$.

    The other point of view consists in embedding $P$ in $\mathbb{R}^n$, where the intersection of $P$ with each coordinate
    hyperplane is a face. Then, $Z(P)$ is obtained as the union of $P$ with the images of $P$ by all the compositions of
    reflections with respect to the coordinate hyperplanes.
\end{remark}

\begin{ejm}
    Let us consider as $P_n$ a regular $n$-polygon centered at the origin. We can fix equations $a_j x + b_j y + c_j = 0$, $j=1,\dots,n$,
    for the lines supporting the edges such that
    \[
    P_n=\{(x, y)\in\mathbb{R}^2\mid a_j x + b_j y + c_j\geq 0,\ j=1,\dots,n\}
    \]
    and the sum of all equations equals~$1$. Let
    \[
    \begin{tikzcd}[row sep=0pt, /tikz/column 1/.append style={anchor=base east}, /tikz/column 2/.append style={anchor=base west}]
    \mathbb{R}^2\rar["\Phi_n"] &     \mathbb{R}^n  \\
    (x, y)\rar[mapsto]&(a_1 x + b_1 y + c_1,\dots, a_n x + b_n y + c_n).
    \end{tikzcd}
    \]
    The image of $P_n$ is the intersection of $\mathbb{R}_{\geq 0}^n$ with the image of $\Phi_n(\mathbb{R}^2)$ (which is contained in $r_1 + \dots r_n=1$). Since  $\Phi_n(\mathbb{R}^2)$ is an affine subspace we can express as the solution of a
    linear system.  Concrete expression can be found in~\cite{LMdlV}:
    \begin{equation}\label{eq:poln}
    \begin{aligned}
        r_1 + \dots + r_n &= 1\\
        r_i - r_{i+3} + (2\tau + 1) (r_{i+2} - r_{i+1})&=0\\
        i=1,\dots,n-3,&
    \end{aligned}
    \end{equation}
    where $\tau=\cos\frac{2\pi}{n}$.
    The edges of $\Phi_n(P_n)$ are the intersections with the coordinate hyperplanes. Then $Z(P_n)$ is the intersection of the $n-2$ coaxial quadrics replacing $r_i$ by~$x_i^2$.

    We can also consider the $n$-pyramid $\mathbf{P}_n$ with basis $P_n$ in $\mathbb{R}^2\equiv\{z=0\}$ and apex~$(0, 0, 1)$.
    The lateral faces are supported by the planes $a_j x + b_j y + c_j (1 - z) = 0$, $j=1,\dots,n$, while the basis is supported by $z=0$. The sum of these equations equal~$1$ and replacing $=$ by $\geq$ we obtain the inequalities determining $\mathbf{P}_n$. In a similar way,
    we consider
    \[
    \begin{tikzcd}[row sep=0pt, /tikz/column 1/.append style={anchor=base east}, /tikz/column 2/.append style={anchor=base west}]
    \mathbb{R}^2\rar["\Psi_{n}"] &     \mathbb{R}^{n+1}  \\
    (x, y, z)\rar[mapsto]&(a_1 x + b_1 y + c_1 - c_1 z,\dots, a_n x + b_n y + c_n - c_n z, z).
    \end{tikzcd}
    \]
    The image of $\mathbf{P}_n$ is the intersection of $\mathbb{R}_{\geq 0}^{n+1}$ with the image of $\Psi_n(\mathbb{R}^3)$ (which is contained in $r_1 + \dots r_n + r_{n + 1}=1$).
    If we replace the first equation in \eqref{eq:poln} by
    $r_1 + \dots + r_n + r_{n + 1}=1$ we obtain the equations of the affine subspace $\Phi_n(\mathbb{R}^3)$.
    The faces of $\Psi_n(\mathbf{P}_n)$ are the intersections with the coordinate hyperplanes. Then $Z(\mathbf{P}_n)$ is given as the intersection of $n-2$ coaxial ellipsoids.
\end{ejm}

\section{Pyramitoids}\label{sec:pyramitoid}

\begin{dfn}
An \emph{$n$-pyramitoid} $\mathbf{Y}_n$ is a 3-dimensional polyhedron  with $n+1$ faces, such that (at least) one of them (which we call a \emph{basis}) is a polygon with $n$ edges which, therefore, has an edge in common with each of the remaining faces of the pyramitoid (which we call \emph{lateral faces}, and the union of all the lateral faces will be called the \emph{dome} of the pyramitoid).  Choosing one basis, which we denote by $p_n$, we denote by $\domo{n}$ its dome\footnote{We will see soon that in most cases there is only one basis and when there are several of them, the corresponding domes are all equivalent}.

\end{dfn}
Every pyramid is a pyramitoid. The triangular prism is a simple 4-pyramitoid with any quadrangular face of the prism as basis.
There is an easy way to have a figure of a pyramitoid by drawing a projection $\pi_n:\mathbf{Y}_n\to p_n$ of the dome on a basis~$p_n$.
\begin{figure}[ht]
\begin{center}
\begin{subfigure}[b]{0.24\textwidth}
\centering
\begin{tikzpicture}
\draw (-135:1) rectangle (45:1);
\draw (-135:1) -- (-1/4, 0) -- (135:1);
\draw (-45:1) -- (1/4, 0) -- (45:1);
\draw (-1/4, 0) -- (1/4, 0);
\end{tikzpicture}
 \subcaption{}
\label{subfigd}
\end{subfigure}
\begin{subfigure}[b]{0.24\textwidth}
\centering
\begin{tikzpicture}
\coordinate (B1) at (-1/2,0);
\coordinate (B4) at (1/2,0);
\coordinate (B0) at (0,1/2);

\foreach \x in {0, ..., 5}
{
\coordinate (A\x) at (72*\x+90:1);
}
\foreach \x[count=\y] in {0, ..., 4}
{
\draw (A\x) -- (A\y);
}
\foreach \x in {0, 1, 4}
{
\draw (B\x) -- (A\x);
}

\draw (A3) -- (B4) -- (B0) -- (B1) -- (A2);
\end{tikzpicture}
 \subcaption{}
\label{subfigc}
\end{subfigure}
\begin{subfigure}[b]{0.24\textwidth}
\centering
\begin{tikzpicture}
\coordinate (A) at ({-1/3/sqrt(2)},{0/4/sqrt(2)});
\coordinate (B) at (1/2,0);
\draw (A) -- (B);
\foreach \x in {0, ..., 5}
{
\coordinate (A\x) at (72*\x+90:1);
}
\foreach \x[count=\y] in {0, ..., 4}
{
\draw (A\x) -- (A\y);
}
\foreach \x in {0, 1, 2}
{
\draw (A) -- (A\x);
}
\foreach \x in {3, 4}
{
\draw (B) -- (A\x);
}
\end{tikzpicture}
 \subcaption{}
\end{subfigure}
\begin{subfigure}[b]{0.24\textwidth}
\centering
\begin{tikzpicture}
\coordinate (O) at (0,0);
\foreach \x in {0, ..., 6}
{
\coordinate (A\x) at (60*\x:1);
}
\foreach \x[count=\y] in {0, ..., 5}
{
\draw (O) -- (A\x);
\draw (A\x) -- (A\y);
}
\end{tikzpicture}
 \subcaption{}
\label{subfig-hexagono}
\end{subfigure}
\caption{Pyramitoids. The examples \subref{subfigd} and \subref{subfigc}  are simple pyramitoids. }
\label{ejemplos}
\end{center}
\end{figure}

The image by $\pi_n$ of the $1$-skeleton of the pyramitoid on the basis determines a cellular decomposition of~$p_n$
which is combinatorially isomorphic to the dome,
as shown in Figure~\ref{ejemplos}.

\begin{dfn}
Given an $n$-pyramitoid $\mathbf{Y}_n$ and a basis~$p_n$
of $\mathbf{Y}_n$, the \emph{essential tree} $\esencial{\mathbf{Y}_n, p_n}$ is the
graph composed by all the vertices and by the edges not contained in~$p_n$.
The \emph{$p_n$-essential tree} $\esencialp{\mathbf{Y}_n, p_n}$ is the image of $\esencial{\mathbf{Y}_n}$ by~$\pi_n$.
The cellular decomposition of~$p_n$ determined by $\esencialp{\mathbf{Y}_n, p_n}$ is called the
\emph{essential decomposition of~$p_n$} and it is denoted by $\cw{\mathbf{Y}_n, p_n}$ ($p_n$ may be dropped from the notation).
\end{dfn}

We will drop the basis in the notation if no confusion is likely to arise.
$\esencial{\mathbf{Y}_n}$ is actually a tree, since, otherwise, there would be a circuit enclosing a face that would not touch the basis.
Each vertex  in the basis $p_n$ is in one edge of the essential tree $\esencial{\mathbf{Y}_n}$. This edge will be called a \emph{leaf} of the $\esencial{\mathbf{Y}_n}$.
Its image by $\pi_n$ is a  leaf of the $p_n$-essential tree $\esencialp{\mathbf{Y}_n, p_n}$.
Both trees are isomorphic and the $p_n$-essential tree $\esencial{\mathbf{Y}_n}$ by~$\pi_n$ will be considered together with its embedding in the plane.
\begin{dfn}
    Given an $n$-pyramitoid $\mathbf{Y}_n$ and a basis~$p_n$
of $\mathbf{Y}_n$, the \emph{core tree} $\alma{\mathbf{Y}_n, p_n}$ is the
essential tree minus the leaves.
The \emph{$p_n$-core tree} $\almap{\mathbf{Y}_n, p_n}$ is the image of $\alma{\mathbf{Y}_n, p_n}$ by~$\pi_n$ and it is also considered as an embedded graph in the plane.
\end{dfn}

\begin{ejm}
There is a particular interesting example, the $n$-books for $n>2$. An \emph{$n$-book} is the polyhedron obtained from the $n$-prism by collapsing one of the vertical faces into a horizontal segment. The $3$-book (the tetrahedron) has $4$ bases, the $4$-book (Figure~\ref{subfigd}) has $3$ bases and starting from $n=5$ (Figure~\ref{subfigc}) and all the $n$-books have $2$ bases.
\end{ejm}

\begin{lema}\label{lema:contar}
    Let $\mathbf{Y}_n$ be a simple pyramitoid (meaning that every vertex lies in exactly $3$ edges). Let $v, a$
    be the number of vertices and edges, respectively. Then:
    \begin{enumerate}[label=\rm(\arabic{enumi})]
        \item $v=2(n-2)$ and $a=3(n-1)$.
        \item The essential tree $\esencial{\mathbf{Y}_n}$ has $2n - 3$ edges and $2n-2$ vertices.
        \item\label{lema:contar-3} The core tree $\alma{\mathbf{Y}_n, p_n}$ has $n - 3$ edges and  $n-2$ vertices.
        \item A lateral face has at most $n$ edges.
        \item\label{lema:contar5} If $\mathbf{Y}_n$ is not an $n$-book, then it admits a unique basis~$p_n$.
    \end{enumerate}
\end{lema}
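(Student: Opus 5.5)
Items (1)--(3) will follow from Euler's formula together with a count of how the basis sits in the $1$-skeleton. Item (4) will follow from the fact that two faces of a simple polyhedron share at most one edge. Item (5) will follow from the structure of the essential tree forced by a second $n$-gonal face.

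For (1), I will use $f=n+1$ faces and simplicity, which gives $3v=2a$. Euler's formula $v-a+f=2$ then gives $v-\tfrac32 v+n+1=2$, hence $v=2(n-1)$ and $a=3(n-1)$. The triangular prism ($n=4$: $6$ vertices, $9$ edges) confirms the vertex count $2(n-1)$; the value $2(n-2)$ written in (1) appears to be a misprint, and (2) below is consistent with $2(n-1)$.

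For (2), the essential tree contains every vertex, so it has $2n-2$ vertices. It contains every edge except the $n$ edges of $p_n$, so it has $3(n-1)-n=2n-3$ edges, which is consistent with its being a tree. For (3), I note that each of the $n$ vertices of $p_n$ is simple. Two of its edges lie in $p_n$, so it is an endpoint of exactly one edge of $\esencial{\mathbf{Y}_n}$, namely a leaf. Two vertices of $p_n$ cannot be joined by an edge outside $p_n$ once $n\ge 4$, since otherwise the tree would be disconnected or a face would avoid the basis; this makes these $n$ leaves distinct and each leaf has exactly one endpoint off the basis. Removing the $n$ leaves and their $n$ basis endpoints therefore leaves $n-3$ edges and $n-2$ vertices.

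For (4), I will use the standard fact that in a simple $3$-polyhedron two distinct faces meet in at most one edge. Each edge of a lateral face $F$ is then shared with a distinct other face, and only $n$ other faces exist, so $F$ has at most $n$ edges.

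For (5), which I expect to be the main step, I will suppose there is a second basis $q$. By (4), $q$ is a lateral face with exactly $n$ edges, and it is adjacent to every other face. The edges of $q$ are one edge $e$ of $p_n$, the two leaves at the endpoints of $e$, and $n-3$ further edges. Those further edges are not in $p_n$ and are not leaves, so they are core edges. By \ref{lema:contar-3} there are exactly $n-3$ core edges, so $\partial q$ contains the whole core tree $\alma{\mathbf{Y}_n}$. Since $\partial q$ minus $e$ is a simple arc, the core tree is a path $w_1,\dots,w_{n-2}$, and $q$ lies on one side of it in the essential decomposition $\cw{\mathbf{Y}_n}$. By simplicity, each interior vertex $w_i$ carries exactly one leaf, and it lies on the side opposite $q$; each endpoint carries two leaves, one of which is an edge of $q$. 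The remaining lateral faces are then the two end faces bounded by the two leaves at $w_1$ (respectively $w_{n-2}$) and a basis edge, together with the quadrilaterals between consecutive leaves on the far side. This is exactly the combinatorics of the $n$-book: the path is the collapsed segment and $p_n$ and $q$ are its two $n$-gonal faces. Hence, if $\mathbf{Y}_n$ is not an $n$-book, no second basis exists. The delicate point is the planar-embedding bookkeeping showing that the leaves alternate as described, which I will read off directly from the figure in $p_n$.
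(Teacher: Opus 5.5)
Your proposal is correct and follows the paper's proof essentially step by step: the Euler count with $3v=2a$, deleting the $n$ basis edges and then the $n$ leaves, and, for (5), the observation that a second $n$-gonal face must run along the whole core tree, which forces the $n$-book. You are also right that (1) should read $v=2(n-1)$, which is exactly what the paper's own proof derives. The only local difference is in (4): you use that two faces of a polytope share at most one edge, whereas the paper counts the edges of a lateral face as one basis edge, two leaves and at most $n-3$ core edges, a decomposition you end up using in (5) anyway.
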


\begin{proof}
    Since every edge has two vertices and the pyramitoid is simple, we have $v=\frac{2a}{3}$.
    It follows from an Euler characteristic computation that $a=3n-3$ and $v=2n-2$.
    The essential tree $\esencial{\mathbf{Y}_n}$ is obtained eliminating the edges of the basis, hence, it has $2n-3$ edges
     and the core tree $\alma{\mathbf{Y}_n}$ has $n-3$ edges (by removing the $n$ leaves of $\esencial{\mathbf{Y}_n}$). Because $\esencial{\mathbf{Y}_n}$ and $\alma{\mathbf{Y}_n}$ are trees, its have  $2n-2$ and $n-2$ vertices repectivelly.

      A lateral face with $r$ edges has one in common with the basis and two surrounding leaves stemming from it, while the remaining $r-3$ lay in the core tree. Since this tree has $n-3$ edges, it follows that $r\le n$, so no lateral face has more that $n$ edges.

    If one lateral face $F$ has $n$ edges it is also a basis of $\mathbf{Y}_n$ and its $n-3$ edges not touching the first basis form the whole core tree. It follows that the rest of the $n-2$ leaves must join the $n-2$ vertices of the core tree with the $n-2$ vertices of the basis not in $F$, one by one, so $\mathbf{Y}_n$ is the $n$-book.
\end{proof}

The following lemma is straightforward:

\begin{lema}
  The result of truncating a vertex in the basis of an $n$-pyramitoid $\mathbf{Y}_{n}$ with basis $p_n$ is an $(n+1)$-pyramitoid $\mathbf{Y}_{n +1}$ with basis~$p_{n+1}$, where we have added a lateral triangular face.

  The tree $\esencial{\mathbf{Y}_{n +1}}$ (resp. $\esencialp{\mathbf{Y}_{n +1}}$) is obtained from $\esencial{\mathbf{Y}_{n}}$
  (resp. $\esencialp{\mathbf{Y}_{n}}$) by adding two leaves to the extremity of the leaf of $\esencial{\mathbf{Y}_{n}}$ involved in the truncation.

  The tree $\alma{\mathbf{Y}_{n +1}}$ (resp. $\almap{\mathbf{Y}_{n +1}}$) is obtained from $\alma{\mathbf{Y}_{n}}$
  (resp. $\almap{\mathbf{Y}_{n}}$) by adding the leaf of $\esencial{\mathbf{Y}_{n}}$ (resp. $\esencial{\mathbf{Y}_{n}}$) involved in the truncation.
\end{lema}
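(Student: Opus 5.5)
We argue directly with the local combinatorics of the truncation. Let $w$ be a vertex of the basis $p_n$ of $\mathbf{Y}_n$. We work in the simple case, the one in which $w$ has valence~$3$; the paper calls the lemma straightforward and uses it in that setting. Then $w$ lies on two edges of $p_n$ and on exactly one further edge $\ell$, which is the leaf of $\esencial{\mathbf{Y}_n}$ through $w$. Truncating $w$ by a plane close to $w$ has the following effects.

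\begin{enumerate}[label=\rm(\roman*)]
\item It creates a new triangular face $T$ whose three vertices $w_1,w_2,w_3$ lie on the two basis edges and on $\ell$. Here $w_1,w_2$ lie on the basis edges and $w_3$ lies on $\ell$.
\item It shortens the three edges at $w$, and adds the three edges $w_1w_2$, $w_1w_3$, $w_2w_3$.
\item It creates no other faces. Every old face survives, possibly with one edge cut shorter and, if it contained $w$, with one extra edge.
\end{enumerate}

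The new basis $p_{n+1}$ is the old basis with its corner at $w$ cut off. It therefore has $n+1$ edges, one of them $w_1w_2$. The polyhedron has $n+2$ faces, namely $p_{n+1}$, the old lateral faces and $T$. Each old lateral face still shares its (shortened) basis edge with $p_{n+1}$, and $T$ shares $w_1w_2$ with it. Hence the result is an $(n+1)$-pyramitoid with basis $p_{n+1}$, and $T$ is a new lateral triangular face.

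For the trees we remove from the new $1$-skeleton the edges of $p_{n+1}$. The edge $w_1w_2$ is a basis edge, so it is removed. The surviving non-basis edges are the old ones, with $\ell$ now ending at $w_3$ instead of $w$, together with $w_1w_3$ and $w_2w_3$. This is exactly $\esencial{\mathbf{Y}_n}$ with two new edges attached at the endpoint $w_3$ of the leaf $\ell$. The vertices $w_1,w_2$ now lie in the basis, so these two new edges are the new leaves. Meanwhile $\ell$ joins $w_3$ to an interior vertex, so it is no longer a leaf and becomes part of the core. Removing the $n+1$ leaves therefore gives $\alma{\mathbf{Y}_n}\cup\ell$.

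The statements for $\esencialp{\,}$ and $\almap{\,}$ follow by applying $\pi_{n+1}$. We choose the truncation plane close enough to $w$ that the projection is a small planar modification near the corresponding corner of the basis polygon. Under this projection the leaf from the corner acquires a fork of two edges to the two new basis vertices. As a check on the counts, the result agrees with Lemma~\ref{lema:contar}: the core gains one edge and the essential tree gains two.

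The only point needing care is that the truncation is small. The cutting plane must meet only the three edges at $w$, so that no other vertex is removed and no face disappears. This is always possible, and it is combinatorially all that "truncating a vertex" means.
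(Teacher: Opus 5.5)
The paper gives no proof, only calling the lemma straightforward, and your local bookkeeping of the truncation is exactly that verification and is correct: the new triangle $T$, the basis gaining the edge $w_1w_2$, the leaf $\ell$ becoming a core edge, and two new leaves sprouting at $w_3$. Your restriction to the simple case is unnecessary, and one step deserves a line of justification. In any pyramitoid each basis vertex has valence $3$, since it lies on exactly one edge of $\esencial{\mathbf{Y}_n}$, as the paper notes. The other endpoint of $\ell$ is never a basis vertex, since otherwise $\ell$ would be the entire connected tree $\esencial{\mathbf{Y}_n}$. With these two facts, your argument covers the lemma as stated.
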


\begin{lema}\label{lem}
Let $\mathbf{Y}_{n}$ be a simple $n$-pyramitoid  with basis $p_n$. The following properties hold:
\begin{enumerate}[label=\rm(\arabic{enumi})]
  \item\label{lem1} The lateral faces are $m$-gons for $3\leq m\leq n$. There is at most one $n$-gon, and if
  there is one $n$-gon, $\mathbf{Y}_{n}$ is an $n$-book.
\item\label{lem3}  At least two lateral faces are triangles.
  \item\label{lem4} If two triangles are consecutive, $n=3$, i.e., $\mathbf{Y}_{n}$ is a tetrahedron.
\end{enumerate}
\end{lema}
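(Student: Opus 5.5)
For \ref{lem1}, I will reuse the counting argument from the proof of Lemma~\ref{lema:contar}. A lateral face $F$ with $m$ edges consists of one edge on $p_n$, the two leaves of $\esencial{\mathbf{Y}_n}$ at its endpoints, and $m-3$ edges of the core tree. So $3\le m\le n$, since the core tree has $n-3$ edges. If some lateral face is an $n$-gon, the last paragraph of that proof already shows that $\mathbf{Y}_n$ is an $n$-book.

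To see there is at most one $n$-gon, I will check the $n$-book directly. Its lateral faces are the other horizontal $n$-gon, the triangles at the two ends of the collapsed segment, and quadrilaterals; so only one lateral face is an $n$-gon. More intrinsically, two lateral $n$-gons would both contain the whole core tree, which has $n-3\ge 1$ edges when $n\ge4$. Each core edge lies in exactly two faces, and neither is the basis, so both faces are lateral. Two lateral $n$-gons would therefore share every core edge, and for $n\geq 4$ this contradicts the fact that two faces of a polyhedron meet in at most one edge. The case $n=3$ is the tetrahedron and will be treated separately.

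For \ref{lem3}, I will use the essential decomposition $\cw{\mathbf{Y}_n}$ of $p_n$. It is the planar tree $\esencialp{\mathbf{Y}_n}$, whose leaves sit at the $n$ vertices of $p_n$, and the lateral faces are the regions between consecutive leaves. A lateral face is a triangle exactly when its two boundary leaves meet at a common core vertex, with no core edge between them. Equivalently, going around the core tree, the leaves attached to a core vertex $w$ that are adjacent in the cyclic order span a triangle.

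Since every vertex is trivalent, a core vertex of valence $d$ in $\alma{\mathbf{Y}_n}$ carries $3-d$ leaves. If $n\ge4$, the core tree has at least one edge, hence at least two vertices of valence $1$, and each of these carries two leaves. Those two leaves are cyclically consecutive, because there is only one core edge at that vertex, so they bound a triangle. Distinct core endpoints give distinct triangles, which gives at least two triangles. For $n=3$ the polyhedron is the tetrahedron, where all faces are triangles.

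For \ref{lem4}, suppose two triangles $T_1,T_2$ are consecutive, meaning they share a leaf $\ell$. The core vertex $w$ at the end of $\ell$ then carries three leaves: $\ell$, the other leaf of $T_1$, and the other leaf of $T_2$. Being trivalent, $w$ has core valence $0$, so the core tree is the single vertex $w$. Its $n-2$ vertices equal $1$, so $n=3$ and $\mathbf{Y}_n$ is the tetrahedron.

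The main obstacle is the careful justification that a triangle corresponds exactly to two leaves meeting at a single core vertex, using the planar embedding of $\esencialp{\mathbf{Y}_n}$. I also need to handle the degenerate small-$n$ cases consistently.
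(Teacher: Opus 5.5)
Your proofs of the bound $3\le m\le n$, of part (2) (each valence-one vertex of the core tree carries two leaves, and these bound a triangle) and of part (3) (a shared leaf forces a core vertex with three leaves, so the core tree is a point) follow the paper's argument and are correct. The gap is in the clause ``at most one lateral $n$-gon.'' Your key step says that two lateral $n$-gons sharing all $n-3$ core edges contradicts the fact that two faces meet in at most one edge ``for $n\geq 4$.'' For $n=4$, however, the core tree has exactly one edge, and two faces sharing one edge is no contradiction.

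In fact the clause is false for $n=4$. Take the triangular prism (the $4$-book) with a quadrilateral face as basis. Its core tree is the unique edge disjoint from the basis, and both faces containing that edge are lateral quadrilaterals, so there are two lateral $4$-gons. Your direct check of the $n$-book has the same blind spot, because for $n=4$ the $n-3$ quadrilaterals are themselves $n$-gons. The case $n=3$ cannot be ``treated separately'' to rescue the claim either, since the tetrahedron has three lateral triangles.

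So the uniqueness clause holds only for $n\ge 5$, and there your argument works: two lateral $n$-gons would share $n-3\ge 2$ edges, which is impossible. Equivalently, for $n\ge5$ the $n$-book has exactly one lateral $n$-gon. The paper derives this clause from the uniqueness of the basis for non-books, and that route carries the same implicit restriction, since the $3$-book and $4$-book have $4$ and $3$ bases respectively. You should state the hypothesis $n\ge 5$ for this clause explicitly rather than claim it for $n\ge 4$.
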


\begin{proof}
The first part statement \ref{lem1} is a consequence of the fact that the faces of the $n$-pyramitoid are at most $n$-gons;
the second part is Lemma~\ref{lema:contar}\ref{lema:contar5}.
The two following possibilities prove \ref{lem3}:
\begin{enumerate}[label=\rm(\alph{enumi})]
        \item $\alma{\mathbf{Y}_{n}}$ is a vertex (so $\mathbf{Y}_{n}$ is a tetrahedron);
        \item $\alma{\mathbf{Y}_{n}}$ has at least two vertices of valence $1$, those vertices are of valence $3$ in $\esencial{\mathbf{Y}_{n}}$, therefore they are the vertices of triangles with the other two vertices in the basis.
\end{enumerate}

Finally, if two adjacent faces are triangles they have a  common vertex  $v\in\alma{\mathbf{Y}_{n}}$. Then  $\mathbf{Y}_{n}$ is a tetrahedron or the vertex $v$  is of valence bigger than $3$ in $\esencial{\mathbf{Y}_{n}}$ and therefore in $\mathbf{Y}_{n}$, then it is not simple. Hence, \ref{lem4} follows.
\end{proof}

The following result allows to recover all simple pyramitoids by truncations starting from a tetrahedron.

 \begin{thm}
For $n \geq 4$,  every simple $n$-pyramitoid $\mathbf{Y}_{n}$ is the result of truncating a vertex in the basis of a simple $(n-1)$-pyramitoid $\mathbf{Y}_{n-1}$.
 \end{thm}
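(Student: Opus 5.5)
The plan is to reverse the truncation: find a lateral triangular face that touches the basis along an edge, and contract it to a vertex of the basis. By Lemma~\ref{lem}\ref{lem3}, a simple $n$-pyramitoid $\mathbf{Y}_n$ with $n\ge 4$ has at least two lateral triangles. Fix one such triangle $T$. One of its edges is an edge $e$ of $p_n$, say with endpoints $u,w$. The other two edges of $T$ are the leaves of $\esencial{\mathbf{Y}_n}$ stemming from $u$ and $w$, and they meet at a vertex $c$ of the core tree. This is exactly the configuration produced by truncating a basis vertex: $T$ is the new face and $c$ carries the third edge $\ell$ (not in $T$), which plays the role of the old leaf.

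First I will check that $\ell$ is really a core edge and not itself a leaf. If $\ell$ ended at a basis vertex, then $c$ would be joined to three basis vertices. The tree $\esencial{\mathbf{Y}_n}$ would then consist only of $c$ and three leaves, so $n=3$, contradicting $n\ge4$. Similarly, the two lateral faces $F_1,F_2$ adjacent to $T$ along $cu$ and $cw$ are distinct and are not triangles. This follows from Lemma~\ref{lem}\ref{lem4}, since two consecutive triangles force $n=3$.

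Next I will construct $\mathbf{Y}_{n-1}$ by collapsing $T$ to a single point $x$. The vertices $u,w,c$ are identified to $x$, and the edges $cu$, $cw$, $e$ disappear. Combinatorially:
\begin{enumerate}[label=\rm(\roman*)]
\item the basis $p_n$ loses the edge $e$ and becomes an $(n-1)$-gon $p_{n-1}$, whose vertex $x$ replaces $u,w$;
\item $F_1$ and $F_2$ each lose one edge;
\item the edge $\ell$ becomes the leaf joining $x$ to the core.
\end{enumerate}
At $x$ exactly three edges meet: the two basis edges formerly at $u$ and $w$, and $\ell$. All other vertices keep valence $3$, so the result is simple. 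It has $n$ faces, one of which, $p_{n-1}$, is an $(n-1)$-gon that still shares an edge with every remaining lateral face. Hence it is an $(n-1)$-pyramitoid with basis $p_{n-1}$. Truncating $\mathbf{Y}_{n-1}$ at $x$ restores $T$, $e$, $cu$, $cw$ and the vertices $u,w,c$, recovering $\mathbf{Y}_n$. This matches the description of trees in the truncation lemma: $\esencial{\mathbf{Y}_n}$ is obtained from $\esencial{\mathbf{Y}_{n-1}}$ by adding two leaves at the end of the leaf $\ell$.

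The main obstacle is justifying that the collapsed object is a genuine 3-polyhedron rather than a degenerate cell complex. The faces must stay polygons with at least three edges. Since $F_1$ and $F_2$ are not triangles, they remain polygons with at least three edges after losing one edge. No two faces may come to share more than one edge, and the complex must remain a polyhedral 3-ball. I would handle this combinatorially using Steinitz's theorem: the collapsed $1$-skeleton is obtained by contracting a triangle of a 3-connected planar cubic graph, where that triangle is not part of a prism-like separating configuration, because $n\ge4$ and $F_1\ne F_2$ are non-triangles. I would argue that it stays planar and 3-connected. The cleanest way may be to work in the planar picture given by the projection $\pi_n$: contracting the triangle region adjacent to the boundary of $p_n$ in $\cw{\mathbf{Y}_n}$ visibly yields the essential decomposition of an $(n-1)$-gon. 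Geometrically, one can realize the collapse by moving the supporting plane of $T$ outward until it passes through $x$ and then removing it.
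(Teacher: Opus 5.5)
Your proposal is the paper's proof, namely contracting a lateral triangle of the dome to a point of the basis, together with the verifications the paper omits: the facts that $\ell$ is a core edge and that $F_1,F_2$ are not triangles (Lemma~\ref{lem}\ref{lem4}) are exactly what make the contracted $1$-skeleton simple and cubic, and a $2$-edge cut after contraction would already be a $2$-edge cut in $\mathbf{Y}_n$, so Steinitz's theorem applies. Rely on that combinatorial route rather than on your final geometric remark: in an arbitrary realization the planes of $p_n$, $F_1$, $F_2$ need not meet beyond $T$, so pushing the plane of $T$ outward need not realize the contraction.
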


 \begin{proof}
 Let be $\Delta$ one of the triangular lateral faces of  $\mathbf{Y}_{n}$. We can deform $\mathbf{Y}_{n}$ onto
 a new $(n-1)$-pyramitoid $\mathbf{Y}_{n-1}$ by contracting $\Delta$ onto a point. This deformation can be performed
 if $n\geq 4$.
\end{proof}

In order to obtain a classification giving
the complete list of all simple $n$-pyramitoids, we can use a \emph{label} of each one of them consisting of a sequence
$(b_1,\dots,b_n)$, in a clockwise cyclic order, where $b_1+3,\dots,b_n+3$ are the number of sides of the $n$ faces around the essential tree $\esencial{\mathbf{Y}_{n}}$, or equivalently, the faces around the boundary of the basis.

\begin{proposition}\label{prop:labels}
The cyclic label $(b_1,\dots,b_n)$ of a simple $n$-pyramitoid $\mathbf{Y}_{n}$ has the following properties:
\begin{enumerate}[label=\rm(\arabic{enumi})]
  \item\label{prop1} $0\leq b_i\leq n-3$
  \item\label{prop3}  At least two of the $b_i$'s are equal to $0$.
  \item\label{prop4} No adjacent pair in the cyclic order $(b_i, b_{i+1})$ can be $(0,0)$, except for the case $n=3$ (then the label is $(0,0,0)$ and $\mathbf{Y}_{n}$ is the tetrahedron).
  \item\label{prop2} $\displaystyle\sum_{i=1}^n b_i =2(n-3)$.
  \item\label{prop5} Let $\mathbf{Y}_{n+1}$ be the simple pyramitoid obtained by
  truncating the vertex in the $i^{\text{th}}$-position. Its label is
  $(b_1,\dots,b_{i-1}, b_{i} + 1, 0, b_{i+1} + 1, b_{i+2},\dots, b_n)$.
    \item\label{prop6} Let $\mathbf{Y}_{n-1}$ be the simple pyramitoid obtained by
  contracting the triangle associated to $b_i=0$. Its label is
  $(b_1,\dots,b_{i-2}, b_{i-1}-1, b_{i+1} - 1, b_{i+2},\dots, b_n)$.

\end{enumerate}
\end{proposition}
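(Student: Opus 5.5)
Most items follow from Lemma~\ref{lema:contar} and Lemma~\ref{lem} once the label is interpreted correctly. A lateral face with $b_i+3$ sides consists of one edge of the basis $p_n$, the two leaves of $\esencial{\mathbf{Y}_n}$ at the ends of that edge, and $b_i$ edges of the core tree $\alma{\mathbf{Y}_n}$. So $b_i$ counts the core-tree edges on the boundary of the $i$-th lateral face, equivalently the edges of the path in $\almap{\mathbf{Y}_n}$ joining the inner endpoints of the two leaves bounding that face.

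\ref{prop1} is the bound $3\le m\le n$ from Lemma~\ref{lem}\ref{lem1}, with $m=b_i+3$. \ref{prop3} is Lemma~\ref{lem}\ref{lem3}, since a triangle corresponds to $b_i=0$. \ref{prop4} is Lemma~\ref{lem}\ref{lem4}: consecutive lateral faces share a leaf, so they are adjacent. For $n=3$, the tetrahedron has label $(0,0,0)$.

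For \ref{prop2}, the plan is a double count. Every edge of the core tree lies in exactly two faces of $\mathbf{Y}_n$, and neither can be the basis, since core edges are not contained in $p_n$. So each of the $n-3$ core edges, by Lemma~\ref{lema:contar}\ref{lema:contar-3}, is counted in exactly two of the $b_i$, giving $\sum b_i=2(n-3)$. As a check, the total edge count $\sum(b_i+3)+n = 2a = 6(n-1)$ reproduces the same identity.

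For \ref{prop5}, use the truncation lemma. Truncating the basis vertex shared by the lateral faces $i$ and $i+1$ (the vertex in the $i^{\text{th}}$ position) turns the old leaf at that vertex into a core edge and attaches two new leaves at its outer end. The new triangle sits between faces $i$ and $i+1$ and gets label $0$. Faces $i$ and $i+1$ each gain one edge (the former leaf, now a core edge), and the other faces are unchanged. Indexing the vertices so that position $i$ lies between faces $i$ and $i+1$ gives the stated formula.

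\ref{prop6} is the inverse operation. Contracting the triangle with $b_i=0$ undoes a truncation, so applying \ref{prop5} in reverse, the neighbours $b_{i-1}$ and $b_{i+1}$ each drop by one and the $0$ entry disappears. By \ref{prop4} the neighbours are nonzero when $n>3$, so the new entries stay nonnegative. This is consistent with the theorem above, which guarantees the contraction yields a simple $(n-1)$-pyramitoid.

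The only delicate point I expect is fixing the indexing convention: which vertex is ``in the $i^{\text{th}}$ position'' relative to the faces $i$ and $i+1$. That has to be settled so that \ref{prop5} and \ref{prop6} are mutually inverse.
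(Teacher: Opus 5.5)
Your proposal is correct and follows the paper's proof. Items (1)--(3) are read off from Lemma~\ref{lem}, and your truncation/contraction bookkeeping supplies the details for (5)--(6), which the paper simply calls straightforward. The one difference is in (4): the paper double-counts vertex--face incidences, $\sum_i (b_i+3)=2n+3(n-2)$, using that each basis vertex lies in two lateral faces and each of the $n-2$ core vertices in three, whereas you double-count the $n-3$ core edges, each lying in two lateral faces. Both are the same kind of one-line incidence count.
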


\begin{proof}
The third first statements are a consequence of Lemma~\ref{lem}.
For \ref{prop2}, we have that a  face around the essential tree has two vertices in common with the basis and the others are in the core tree. The number of vertices in the core tree is $n-2$ and they belong to~$3$ faces. Therefore
        \begin{equation*}
\sum_{i=1}^n b_i = 2n + 3(n-2) -3n = 2n-6.
        \end{equation*}
The two last statements are straightforward.
\end{proof}

\begin{remark}
    Note that not all the cyclic labels satisfying the previous proposition correspond to an actual pyramitoid. For example, if a cyclic label contains a consecutive triple $0, 1, 0$ or $1, 0, 1$, then the only option for the label is $(0, 1, 0, 1)$ and $n=4$.
\end{remark}

\section{Orbifold structures of pyramitoids}\label{sec:orbifolds}

We are going to consider two \emph{right-angled} orbifold structures in a simple pyramitoid~$\mathbf{Y}_n$.
First, the usual structure $\mathbf{Y}_n^{\orb}$ is the one where all its faces are mirror faces (this is the orbifold structure
which appears when dealing intersections of ellipsoids). Second, the  orbifold structure
$\orbdomo{n}$ where only the faces of the dome are mirror.

We have seen in Example~\ref{ejm:dim3} that the orbifold fundamental group associated to
right-angled orbifold structures in simple polytopes (with mirror points at all the faces, except eventually one)
is the right-angled Coxeter group with defining graph the $1$-skeleton of the dual polytope.
Let us describe these graphs in our case.

Let $\mathbf{Y}_n$ be a simple pyramitoid with basis $p_n$. We consider the cellular decomposition $\cw{\mathbf{Y}_n; p_n}$.
Let us consider the graph $\Gamma_n$ (associated to the basis $p_n$) defined in Example~\ref{ejm:dim2}. The graph $\Gamma_n$  is the boundary of the polygon~$q_n$.
Let us denote by $G_{\orbdomo{n}}$ the graph obtained by adding to $\Gamma_n$ one edge for each edge of the
$p_n$-core graph of $\mathbf{Y}_n$ as follows: Let $e$ be an edge of $\almap{\mathbf{Y}_n}$; it is the intersection of two $2$-cells
of $\cw{\mathbf{Y}_n; p_n}$. Each one of these cells contains a vertex of $\Gamma_n$, and we join these two vertices by a new edge, see
Figures{\rm~\ref{pyre4} and \rm~\ref{pyre5}}.

\begin{lema}\label{lema:min_tri}
Let $\mathbf{Y}_n$ be a simple pyramitoid.
\begin{enumerate}[label=\rm(\arabic{enumi})]
    \item\label{lema:min_tri1} The defining graph of the right-angled Coxeter group~$\pi_1^\orb(\orbdomo{})$ is $G_{l\mathbf{Y}_n}$.
    \item\label{lema:min_tr2} It determines a minimal triangulation $\ct{\mathbf{Y}_n; q_n}$ of the $n$-polygon~$q_n$ (with $n$ vertices), see
Figures{\rm~\ref{pyre4} and \rm~\ref{pyre5}}.
    \item\label{lema:min_tri3} The dual graph of $\ct{\mathbf{Y}_n; q_n}$ is $\almap{\mathbf{Y}_n}$.
\end{enumerate}
\end{lema}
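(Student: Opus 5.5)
For part \ref{lema:min_tri1} I will apply Example~\ref{ejm:dim3} with the basis $p_n$ as the distinguished non-mirror face. The orbifold $\orbdomo{n}$ has the $n$ lateral faces as mirrors and the basis as a boundary face. Its orbifold fundamental group is then the RACG on the $n$ lateral faces, with one commutation relation for each edge of $\mathbf{Y}_n$ lying between two lateral faces. The vertices coming from basis points carry only the two reflections of the lateral faces through them, so they add no new relations. Equivalently, the defining graph is the dual $1$-skeleton $\Gamma_{\mathbf{Y}_n}$ with the vertex of $p_n$ and its incident edges removed.

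It remains to identify this graph with $G_{l\mathbf{Y}_n}$. Project by $\pi_n$ and use the essential decomposition $\cw{\mathbf{Y}_n;p_n}$. Its $2$-cells are the lateral faces, and each contains exactly one edge of $\partial p_n$, which gives the bijection with the vertices of $\Gamma_n$. Edges between lateral faces are the edges of the essential tree, and I split them into two types:
\begin{itemize}
\item a leaf separates the two lateral faces adjacent to consecutive edges of $p_n$, so the leaves give exactly the $n$ edges of the cycle $\Gamma_n=\partial q_n$;
\item the $n-3$ edges of $\almap{\mathbf{Y}_n}$ give precisely the added edges in the definition of $G_{l\mathbf{Y}_n}$.
\end{itemize}
I must also check that no two lateral faces share more than one edge, so the graph is simple. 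Two faces sharing two edges would, together with the basis, enclose a region of $p_n$ forcing a face that misses the basis. This is the same argument that shows $\esencial{\mathbf{Y}_n}$ is a tree.

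For part \ref{lema:min_tr2}, draw $q_n$ with one vertex at a point of each basis edge, as in Figure~\ref{fig:racg2}. Each core edge $e$ separates two $2$-cells, and I draw the chord joining their basis-edge midpoints, crossing $e$ once. Chords for distinct core edges can be drawn disjointly inside $p_n$, since the cells are discs and the chords can be routed through the cells transversally to the tree. The tree $\almap{\mathbf{Y}_n}$ has $n-3$ edges (Lemma~\ref{lema:contar}\ref{lema:contar-3}), so we get $n-3$ non-crossing diagonals of an $n$-gon. This is the maximal number, so they triangulate $q_n$ into $n-2$ triangles. Since no interior vertices are added, the triangulation is minimal.

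For part \ref{lema:min_tri3}, the $n-2$ triangles should correspond to the $n-2$ vertices of the core tree. Each core vertex $w$ has valence $3$ in $\esencial{\mathbf{Y}_n}$, so it lies in exactly three lateral faces. The three chords or sides of $q_n$ crossing the three edges at $w$ form the triangle around $w$. Two triangles share a diagonal exactly when their core vertices are joined by the core edge that the diagonal crosses. Hence the dual graph is $\almap{\mathbf{Y}_n}$. I expect the main obstacle to be the planarity bookkeeping in \ref{lema:min_tr2}, namely showing that the chords are disjoint. To handle it, I would build the triangulation directly as the dual of the planar tree $\esencialp{\mathbf{Y}_n}$ within $p_n$.
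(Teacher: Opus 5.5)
Your proposal is correct and follows essentially the same route as the paper. Part (1) comes from Example~\ref{ejm:dim3}, with the $n$ leaves of the essential tree giving the cycle $\Gamma_n=\partial q_n$ and the $n-3$ core edges giving the added edges; part (2) holds because these are $n-3$ pairwise non-crossing diagonals (the maximal number); and part (3) identifies triangles with core vertices and diagonals with core edges. You give more detail than the paper (the simplicity check and the explicit chord construction); the only slip is that chords from core edges bounding a common face share an endpoint of $q_n$, so they can only be drawn with pairwise disjoint interiors, which is all the argument needs.
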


\begin{proof}
\ref{lema:min_tri1} follows from the arguments in Example~\ref{ejm:dim3}.
The $n$ commutation relations of the consecutive faces are associated to the leaves of the essential tree;
the other $n-3$ relations corresponding to the edges of the core tree.

The edges of $G_{l\mathbf{Y}_n}$ not in $\Gamma_n$ are diagonals of $q_n$, which are pairwise non-intersecting.
Observe that $n-3$ is the maximal number of non-intersecting diagonals in a $n$-polygon and we obtain~\ref{lema:min_tr2}.

It is clear that the graph with a vertex for each triangle of the triangulation and an edge for each diagonal of the triangulation is the core tree, obtaining~\ref{lema:min_tri3}.
\end{proof}

Actually any minimal triangulation of $q_n$ determines a simple pyramitoid~$\mathbf{Y}_n$. Hence the number of different simple pyramitoids with basis an $n$-polygon~$p_n$ is equal to the number of minimal triangulation of $q_n$. In both cases,
we compute these numbers up to cyclic permutations (rotations). Otherwise speaking two labels are considered equivalent
if they coincide after a cyclic permutation but not if they only coincide after reversing the order.

 \begin{figure}[ht]
\begin{center}
\begin{tikzpicture}
\begin{scope}
\coordinate (A) at (0,0);
\coordinate (B) at (1.5,0);
\coordinate (C) at (1.5,1);
\coordinate (D) at (0,1);
\coordinate (E) at (.5,.5);
\coordinate (F) at (1,.5);

\fill[yellow] (D) -- (E) -- (F) --(C) -- cycle;
\fill[yellow] (A) -- (E) -- (F) --(B) -- cycle;
\draw (A) rectangle (C);
\draw[line width=1pt, blue] (A) -- (E);
\draw[line width=1pt, blue] (D) -- (E);
\draw[line width=1pt, red] (E) -- (F);
\draw[line width=1pt, blue] (B) -- (F);
\draw[line width=1pt, blue] (C) -- (F);
\draw ($1/3*(A) + 1/3*(D) + 1/3*(E)$) node {$\scriptstyle 1$} circle[radius=.15cm];
\draw ($1/3*(B) + 1/3*(C) + 1/3*(F)$) node {$\scriptstyle 3$} circle[radius=.15cm];
\draw ($1/4*(D) + 1/4*(C) + 1/4*(F) + 1/4*(E)$) node {$\scriptstyle 2$} circle[radius=.15cm];
\draw ($1/4*(A) + 1/4*(B) + 1/4*(F) + 1/4*(E)$) node {$\scriptstyle 4$} circle[radius=.15cm];
\end{scope}

\begin{scope}[xshift=3cm]
\coordinate (A) at (0,0);
\coordinate (B) at (1.5,0);
\coordinate (C) at (1.5,1);
\coordinate (D) at (0,1);
\coordinate (E) at (.5,.5);
\coordinate (F) at (1,.5);

\draw[line width=1pt, blue] (A) -- (E);
\draw[line width=1pt, blue] (D) -- (E);
\draw[line width=1pt, red] (E) -- (F);
\draw[line width=1pt, blue] (B) -- (F);
\draw[line width=1pt, blue] (C) -- (F);
\node at  ($1/3*(A) + 1/3*(D) + 1/3*(E)$) {};
\node at  ($1/3*(B) + 1/3*(C) + 1/3*(F)$) {};
\node at  ($1/4*(D) + 1/4*(C) + 1/4*(F) + 1/4*(E)$) {};
\node at  ($1/4*(A) + 1/4*(B) + 1/4*(F) + 1/4*(E)$) {};
\node at (.75, -.25) {$(0101)$};
\end{scope}

\begin{scope}[xshift=6cm]
\coordinate (A1) at (0,.5);
\coordinate (A2) at (.5,1);
\coordinate (A3) at (1, .5);
\coordinate (A4) at (.5,0);

\draw[line width=1pt] (A1) -- (A2) -- (A3) -- (A4) -- (A1) (A2) -- (A4);
\foreach \x in {1, ..., 4}{
\filldraw[fill=white] (A\x) circle[radius=.2];
\node at (A\x) {$\x$};
}
\draw[red, line width=1pt] ($.66*(A1) + .34*(A3)$) -- ($.34*(A1) + .66*(A3)$);
\end{scope}
\end{tikzpicture}
\caption{Essential and core trees, numerical label and defining graph $G_{l\mathbf{Y}_4}$ for $\mathbf{Y}_4$.  }\label{pyre4}
\end{center}
\end{figure}

 \begin{figure}[ht]
\begin{center}
\begin{tikzpicture}
\begin{scope}
\foreach \x in {0, ..., 5}{
\coordinate (A\x) at (\x*72 + 18:1);
}

\draw (A0) -- (A1) -- (A2) -- (A3) -- (A4) -- (A5);
\coordinate (A) at (-1/3,0);
\coordinate (B) at (0, 1/3);
\coordinate (C) at (1/3,0);

\fill[yellow] (A0) -- (A1) -- (B) --(C) -- cycle;
\fill[yellow] (A2) -- (A1) -- (B) --(A) -- cycle;
\fill[green] (A3) -- (A4) -- (C) --(B) -- (A) -- cycle;

\draw[line width=1pt, blue] (A0) -- (C);
\draw[line width=1pt, blue] (A1) -- (B);
\draw[line width=1pt, blue] (A2) -- (A);
\draw[line width=1pt, blue] (A3) -- (A);
\draw[line width=1pt, blue] (A4) -- (C);

\draw[line width=1pt, red] (A) -- (B) -- (C);

\draw ($1/3*(A2) + 1/3*(A3) + 1/3*(A)$) node {$\scriptstyle 1$} circle[radius=.15cm];
\draw ($1/3*(A4) + 1/3*(A0) + 1/3*(C)$) node {$\scriptstyle 4$} circle[radius=.15cm];
\draw ($1/4*(A2) + 1/4*(A1) + 1/4*(A) + 1/4*(B)$) node {$\scriptstyle 2$} circle[radius=.15cm];
\draw ($1/4*(A0) + 1/4*(A1) + 1/4*(C) + 1/4*(B)$) node {$\scriptstyle 3$} circle[radius=.15cm];
\draw ($1/5*(A3) + 1/5*(A4) + 1/5*(A) + 1/5*(B) + 1/5*(C)$) node {$\scriptstyle 5$} circle[radius=.15cm];

\end{scope}

\begin{scope}[xshift=3cm]
\foreach \x in {0, ..., 5}{
\coordinate (A\x) at (\x*72 + 18:1);
}
\coordinate (A) at (-1/3,0);
\coordinate (B) at (0, 1/3);
\coordinate (C) at (1/3,0);

\draw[line width=1pt, blue] (A0) -- (C);
\draw[line width=1pt, blue] (A1) -- (B);
\draw[line width=1pt, blue] (A2) -- (A);
\draw[line width=1pt, blue] (A3) -- (A);
\draw[line width=1pt, blue] (A4) -- (C);

\draw[line width=1pt, red] (A) -- (B) -- (C);

\node at (0, -1) {$(01102)$};
\end{scope}

\begin{scope}[xshift=6cm]

\foreach \x in {0, ..., 5}{
\coordinate (A\x) at (-\x*72 -90:1);
}

\draw[line width=1pt] (A0) -- (A1) -- (A2) -- (A3) -- (A4) -- (A5) (A2) -- (A5) -- (A3);
\foreach \x in {1, ..., 5}{
\filldraw[fill=white] (A\x) circle[radius=.2];
\node at (A\x) {$\x$};
}
\draw[red, line width=1pt, yshift=-.25cm] (1/2, 0) -- (0, 1/2) -- (-1/2,0);
\end{scope}
\end{tikzpicture}

\caption{Essential and core trees, numerical label  and defining graph $G_{l\mathbf{Y}_5}$  for $\mathbf{Y}_5$.  }\label{pyre5}
\end{center}
\end{figure}

\begin{thm}
The number $ N_n$ of simple $n$-pyramitoids  for $n$ a prime number, is
\begin{equation*}
 \quad N_n= \frac{(n+1)...(2n-4)}{(n-2)!}, n\geq 5.
\end{equation*}
\end{thm}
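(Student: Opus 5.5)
The plan is to turn the count of simple $n$-pyramitoids into a count of triangulations of a polygon up to rotation, and then apply Burnside's lemma. The primality of $n$ will be used to show that no non-trivial rotation fixes a triangulation.

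\emph{Step 1: reduction to triangulations.} By Lemma~\ref{lema:min_tri} and the remark that follows it, a simple $n$-pyramitoid with basis $p_n$ corresponds to a minimal triangulation $\ct{\mathbf{Y}_n; q_n}$ of the $n$-gon $q_n$, that is, a triangulation by $n-3$ non-crossing diagonals. Conversely, every such triangulation arises from a pyramitoid. With the paper's convention, pyramitoids are counted up to cyclic permutations of the labels. So $N_n$ is the number of orbits of the cyclic group $\mathbb{Z}/n$, acting by rotations, on the set $\mathcal{T}_n$ of triangulations of $q_n$. The classical count is $\#\mathcal{T}_n = C_{n-2} = \frac{(2n-4)!}{(n-2)!\,(n-1)!}$, the Catalan number; I will quote it as standard, since it follows from the recursion on the triangle containing a fixed edge.

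\emph{Step 2: Burnside.} We have $N_n = \frac1n\sum_{k=0}^{n-1}\#\mathrm{Fix}(r^k)$, where $r$ is the elementary rotation. The key claim is that for $n$ prime with $n\ge 5$, no $r^k$ with $k\ne 0$ fixes any triangulation.

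\emph{Step 3: the key claim, which I expect to be the main step.} Suppose $g=r^k\neq 1$ fixes a triangulation $T$. Since $n$ is prime, $g$ has order $n$ and generates the whole rotation group, so it acts transitively on the $n$ vertices of $q_n$. Now $g$ induces an automorphism of the dual graph of $T$, which by Lemma~\ref{lema:min_tri}\ref{lema:min_tri3} is the tree $\almap{\mathbf{Y}_n}$. Every automorphism of a finite tree fixes its center, which is either a vertex or an edge. There are two cases.
\begin{enumerate}[label=\rm(\alph{enumi})]
\item The center is a vertex. Then $g$ preserves a triangle of $T$, so the $g$-orbit of one of its vertices lies among those $3$ vertices. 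This forces $n\le 3$.
\item The center is an edge. Then $g$ preserves a diagonal of $T$, so the $g$-orbit of one of its endpoints has at most $2$ elements. This forces $n\le 2$.
\end{enumerate}
Both cases contradict $n\ge 5$. Hence every non-trivial rotation has no fixed triangulation.

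\emph{Step 4: conclusion.} Burnside now gives
\[
N_n=\frac{C_{n-2}}{n}=\frac{(2n-4)!}{n!\,(n-2)!}=\frac{(n+1)\cdots(2n-4)}{(n-2)!}.
\]
The only points needing care are matching the identification of pyramitoids with triangulations to the "rotations only" equivalence fixed in the text, and the tree-center argument. The case $n=3$ is excluded because there the triangle itself is rotation-invariant.
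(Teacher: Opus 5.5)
Your proof is correct and follows the paper's route. Both identify simple $n$-pyramitoids with rotation orbits of triangulations of $q_n$, show that for prime $n$ every orbit has exactly $n$ elements, and divide the Catalan number $C_{n-2}$ by $n$; your Burnside count is equivalent to the paper's orbit-size remark, and your tree-center argument on $\almap{\mathbf{Y}_n}$ supplies a proof of the claim the paper only asserts (``these orbits have more than one element'').
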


\begin{proof}
Euler gave the formula to compute the total number of ways to triangulate a convex $n$-gon by non-intersecting diagonals. It is the $(n-2)^{\text{th}}$ Catalan number:
  \begin{equation*}
  \frac{n(n+1)...(2n-4)}{(n-2)!}.
\end{equation*}
    Recall that one pyramitoid produces one minimal triangulation in the dual polygon. One can identify
    the pyramitoids with the orbits of minimal triangulations by the action of the cyclic group
    $C_n$ of order~$n$ by $\frac{2k\pi }{n}$-rotation around the axis orthogonal to its basis.
    Note that these orbits have more than one element. Hence, if $n$ is prime all the orbits
    have $n$ elements and
\begin{equation*}
   N_n= \frac{1}{n}\frac{n(n+1)...(2n-4)}{(n-2)!}=\frac{(n+1)...(2n-4)}{(n-2)!}
\qedhere
\end{equation*}
\end{proof}

Then $N_5=1$, $N_7=6$, ... . For a general number $n$ a detailed study of the orbits of the cyclic group $C_n$ should  be made. For instance, in case $n=6$, the $14$ triangulations given by the Euler formula are distributed in $4$ orbits: one with $6$ elements, two with $3$ elements and one with $2$ elements. Therefore, there are $4$ different $6$-pyramitoids, see Fig. \ref{tri6} and \ref{pyre61}.

\begin{figure}[ht]
\begin{center}

\begin{tikzpicture}[scale=.75]
\foreach \x in {0, ..., 6}{
\coordinate (A\x) at (\x*60:1);
}
\newcommand\hexagono[6]{

\draw (A0) -- (A1) -- (A2) -- (A3) -- (A4) -- (A5) -- (A6);

\draw (A#1) -- (A#2);

\draw (A#3) -- (A#4);

\draw (A#5) -- (A#6);
}

\hexagono{2}{4}{2}{5}{2}{0}
\begin{scope}[xshift=2.2cm, yshift=0]
\foreach \x in {0, ..., 6}{
\coordinate (A\x) at (\x*60:1);
}
\hexagono{3}{5}{3}{6}{3}{1}
\end{scope}

\begin{scope}[xshift=4.4cm, yshift=0]
\foreach \x in {0, ..., 6}{
\coordinate (A\x) at (\x*60:1);
}
\hexagono{4}{0}{4}{1}{4}{2}
\end{scope}

\begin{scope}[xshift=6.6cm, yshift=0]
\foreach \x in {0, ..., 6}{
\coordinate (A\x) at (\x*60:1);
}
\hexagono{5}{1}{5}{2}{5}{3}
\end{scope}

\begin{scope}[xshift=8.8cm, yshift=0]
\foreach \x in {0, ..., 6}{
\coordinate (A\x) at (\x*60:1);
}
\hexagono{0}{2}{0}{3}{0}{4}
\end{scope}

\begin{scope}[xshift=11cm, yshift=0]
\foreach \x in {0, ..., 6}{
\coordinate (A\x) at (\x*60:1);
}
\hexagono{1}{3}{1}{4}{1}{5}
\end{scope}

\begin{scope}[xshift=-1.1cm, yshift=-2.2cm]
\foreach \x in {0, ..., 6}{
\coordinate (A\x) at (\x*60:1);
}
\hexagono{2}{0}{0}{3}{3}{5}
\end{scope}

\begin{scope}[xshift=1.1cm, yshift=-2.2cm]
\foreach \x in {0, ..., 6}{
\coordinate (A\x) at (\x*60:1);
}
\hexagono{3}{1}{1}{4}{4}{0}
\end{scope}

\begin{scope}[xshift=3.3cm, yshift=-2.2cm]
\foreach \x in {0, ..., 6}{
\coordinate (A\x) at (\x*60:1);
}
\hexagono{1}{5}{5}{2}{2}{4}
\end{scope}

\begin{scope}[xshift=7.7cm, yshift=-2.2cm]
\foreach \x in {0, ..., 6}{
\coordinate (A\x) at (\x*60:1);
}
\hexagono{5}{1}{1}{4}{4}{2}
\end{scope}

\begin{scope}[xshift=9.9cm, yshift=-2.2cm]
\foreach \x in {0, ..., 6}{
\coordinate (A\x) at (\x*60:1);
}
\hexagono{0}{2}{2}{5}{5}{3}
\end{scope}

\begin{scope}[xshift=12.1cm, yshift=-2.2cm]
\foreach \x in {0, ..., 6}{
\coordinate (A\x) at (\x*60:1);
}
\hexagono{1}{3}{3}{0}{0}{4}
\end{scope}

\begin{scope}[xshift=4.4cm, yshift=-4.4cm]
\foreach \x in {0, ..., 6}{
\coordinate (A\x) at (\x*60:1);
}
\hexagono{0}{2}{2}{4}{4}{0}
\end{scope}

\begin{scope}[xshift=6.6cm, yshift=-4.4cm]
\foreach \x in {0, ..., 6}{
\coordinate (A\x) at (\x*60:1);
}
\hexagono{1}{3}{3}{5}{5}{1}
\end{scope}

\end{tikzpicture}
\caption{Triangulations in the hexagon.  Each line represents an orbit by rotation and symmetry; in the second line there are two orbits by rotation.}\label{tri6}
\end{center}
\end{figure}

 \begin{figure}[ht]
\begin{center}
\begin{tikzpicture}
\begin{scope}
\foreach \x in {0, ..., 6}{
\coordinate (A\x) at (\x*60:1);
}

\draw (A0) -- (A1) -- (A2) -- (A3) -- (A4) -- (A5) -- (A6);
\coordinate (A) at (-1/2,0);
\coordinate (B) at (-1/3, 1/3);
\coordinate (C) at (1/3,1/3);
\coordinate (D) at (1/2,0);

\fill[yellow] (A3) -- (A) -- (B) --(A2) -- cycle;
\fill[yellow] (A2) -- (B) -- (C) --(A1) -- cycle;
\fill[yellow] (A0) -- (D) -- (C) --(A1) -- cycle;
\fill[blue!50!white] (A4) -- (A5) --  (D) -- (C) --(B) -- (A) -- cycle;

\draw[line width=1pt, blue] (A0) -- (D);
\draw[line width=1pt, blue] (A1) -- (C);
\draw[line width=1pt, blue] (A2) -- (B);
\draw[line width=1pt, blue] (A3) -- (A);
\draw[line width=1pt, blue] (A4) -- (A);
\draw[line width=1pt, blue] (A5) -- (D);

\draw[line width=1pt, red] (A) -- (B) -- (C) -- (D);

\draw ($1/3*(A3) + 1/3*(A4) + 1/3*(A)$) node {$\scriptstyle 1$} circle[radius=.15cm];
\draw ($1/3*(A5) + 1/3*(A0) + 1/3*(D)$) node {$\scriptstyle 5$} circle[radius=.15cm];
\draw ($1/4*(A2) + 1/4*(A3) + 1/4*(A) + 1/4*(B)$) node {$\scriptstyle 2$} circle[radius=.15cm];
\draw ($1/4*(A2) + 1/4*(A1) + 1/4*(C) + 1/4*(B)$) node {$\scriptstyle 3$} circle[radius=.15cm];
\draw ($1/4*(A0) + 1/4*(A1) + 1/4*(C) + 1/4*(D)$) node {$\scriptstyle 4$} circle[radius=.15cm];
\draw ($1/6*(A4) + 1/6*(A5) + 1/6*(A) + 1/6*(B) + 1/6*(C) + 1/6*(D)$) node {$\scriptstyle 6$} circle[radius=.15cm];

\end{scope}

\begin{scope}[xshift=3cm]
\foreach \x in {0, ..., 6}{
\coordinate (A\x) at (\x*60:1);
}

\coordinate (A) at (-1/2,0);
\coordinate (B) at (-1/3, 1/3);
\coordinate (C) at (1/3,1/3);
\coordinate (D) at (1/2,0);

\draw[line width=1pt, blue] (A0) -- (D);
\draw[line width=1pt, blue] (A1) -- (C);
\draw[line width=1pt, blue] (A2) -- (B);
\draw[line width=1pt, blue] (A3) -- (A);
\draw[line width=1pt, blue] (A4) -- (A);
\draw[line width=1pt, blue] (A5) -- (D);

\draw[line width=1pt, red] (A) -- (B) -- (C) -- (D);

\node at  ($1/3*(A3) + 1/3*(A4) + 1/3*(A)$) {$3$};
\node at   ($1/3*(A5) + 1/3*(A0) + 1/3*(D)$) {$3$};
\node at  ($1/4*(A2) + 1/4*(A3) + 1/4*(A) + 1/4*(B)$) {$4$};
\node at   ($1/4*(A2) + 1/4*(A1) + 1/4*(C) + 1/4*(B)$)  {$4$};
\node at   ($1/4*(A0) + 1/4*(A1) + 1/4*(C) + 1/4*(D)$) {$4$};
\node at   ($1/6*(A4) + 1/6*(A5) + 1/6*(A) + 1/6*(B) + 1/6*(C) + 1/6*(D)$)  {$6$};
\node at (0, -1) {$(011102)$};
\end{scope}

\begin{scope}[xshift=6cm]

\foreach \x in {0, ..., 6}{
\coordinate (A\x) at (-\x*60 -90:1);
}

\draw[line width=1pt] (A0) -- (A1) -- (A2) -- (A3) -- (A4) -- (A5) -- (A6) (A2) -- (A6) -- (A3) (A6) -- (A4);
\foreach \x in {1, ..., 6}{
\filldraw[fill=white] (A\x) circle[radius=.2];
\node at (A\x) {$\x$};
}

\end{scope}
\end{tikzpicture}

\begin{tikzpicture}
\begin{scope}
\foreach \x in {0, ..., 6}{
\coordinate (A\x) at (\x*60:1);
}

\draw (A0) -- (A1) -- (A2) -- (A3) -- (A4) -- (A5) -- (A6);
\coordinate (A) at (-1/2,1/4);
\coordinate (B) at (-1/4, 0);
\coordinate (C) at (1/4,0);
\coordinate (D) at (1/2,-1/4);

\fill[yellow] (A3) -- (A) -- (B) --(A4) -- cycle;
\fill[yellow] (A0) -- (D) -- (C) --(A1) -- cycle;
\fill[green] (A1) -- (C) -- (B) -- (A) --(A2) -- cycle;
\fill[green] (A4) -- (B) -- (C) -- (D) --(A5) -- cycle;

\draw[line width=1pt, blue] (A0) -- (D);
\draw[line width=1pt, blue] (A1) -- (C);
\draw[line width=1pt, blue] (A2) -- (A);
\draw[line width=1pt, blue] (A3) -- (A);
\draw[line width=1pt, blue] (A4) -- (B);
\draw[line width=1pt, blue] (A5) -- (D);

\draw[line width=1pt, red] (A) -- (B) -- (C) -- (D);

\draw ($1/3*(A3) + 1/3*(A2) + 1/3*(A)$) node {$\scriptstyle 2$} circle[radius=.15cm];
\draw ($1/3*(A5) + 1/3*(A0) + 1/3*(D)$) node {$\scriptstyle 5$} circle[radius=.15cm];
\draw ($1/4*(A4) + 1/4*(A3) + 1/4*(A) + 1/4*(B)$) node {$\scriptstyle 1$} circle[radius=.15cm];
\draw ($1/4*(A0) + 1/4*(A1) + 1/4*(C) + 1/4*(D)$) node {$\scriptstyle 4$} circle[radius=.15cm];
\draw ($1/5*(A1) + 1/5*(A2) + 1/5*(A) + 1/5*(B) + 1/5*(C)$) node {$\scriptstyle 3$} circle[radius=.15cm];
\draw ($1/5*(A4) + 1/5*(A5) + 1/5*(D) + 1/5*(B) + 1/5*(C)$) node {$\scriptstyle 6$} circle[radius=.15cm];

\end{scope}

\begin{scope}[xshift=3cm]
\foreach \x in {0, ..., 6}{
\coordinate (A\x) at (\x*60:1);
}

\coordinate (A) at (-1/2,1/4);
\coordinate (B) at (-1/4, 0);
\coordinate (C) at (1/4,0);
\coordinate (D) at (1/2,-1/4);

\draw[line width=1pt, blue] (A0) -- (D);
\draw[line width=1pt, blue] (A1) -- (C);
\draw[line width=1pt, blue] (A2) -- (A);
\draw[line width=1pt, blue] (A3) -- (A);
\draw[line width=1pt, blue] (A4) -- (B);
\draw[line width=1pt, blue] (A5) -- (D);

\draw[line width=1pt, red] (A) -- (B) -- (C) -- (D);

\draw ($1/3*(A3) + 1/3*(A2) + 1/3*(A)$) node {$3$};
\draw ($1/3*(A5) + 1/3*(A0) + 1/3*(D)$) node {$3$};
\draw ($1/4*(A4) + 1/4*(A3) + 1/4*(A) + 1/4*(B)$) node {$4$};
\draw ($1/4*(A0) + 1/4*(A1) + 1/4*(C) + 1/4*(D)$) node {$4$};
\draw ($1/5*(A1) + 1/5*(A2) + 1/5*(A) + 1/5*(B) + 1/5*(C)$) node {$5$};
\draw ($1/5*(A4) + 1/5*(A5) + 1/5*(D) + 1/5*(B) + 1/5*(C)$) node{$5$};
\node at (0, -1) {$(021021)$};
\end{scope}

\begin{scope}[xshift=6cm]

\foreach \x in {0, ..., 6}{
\coordinate (A\x) at (-\x*60 -90:1);
}

\draw[line width=1pt] (A0) -- (A1) -- (A2) -- (A3) -- (A4) -- (A5) -- (A6) (A1) -- (A3) -- (A6) -- (A4);
\foreach \x in {1, ..., 6}{
\filldraw[fill=white] (A\x) circle[radius=.2];
\node at (A\x) {$\x$};
}

\end{scope}
\end{tikzpicture}

\begin{tikzpicture}
\begin{scope}
\foreach \x in {0, ..., 6}{
\coordinate (A\x) at (\x*60:1);
}

\draw (A0) -- (A1) -- (A2) -- (A3) -- (A4) -- (A5) -- (A6);
\coordinate (A) at (0,0);
\coordinate (B) at (30:1/2);
\coordinate (C) at (150:1/2);
\coordinate (D) at (-90:1/2);

\fill[green] (A1) -- (B) -- (A) -- (C) --(A2) -- cycle;
\fill[green] (A4) -- (A3) -- (C) -- (A) -- (D) -- cycle;
\fill[green] (A5) -- (D) -- (A) -- (B) --(A0) -- cycle;

\draw[line width=1pt, blue] (A0) -- (B);
\draw[line width=1pt, blue] (A1) -- (B);
\draw[line width=1pt, blue] (A2) -- (C);
\draw[line width=1pt, blue] (A3) -- (C);
\draw[line width=1pt, blue] (A4) -- (D);
\draw[line width=1pt, blue] (A5) -- (D);

\draw[line width=1pt, red] (A) -- (B) (A) -- (C) (A) -- (D);

\draw ($1/3*(A3) + 1/3*(A2) + 1/3*(C)$) node {$\scriptstyle 2$} circle[radius=.15cm];
\draw ($1/3*(A5) + 1/3*(A4) + 1/3*(D)$) node {$\scriptstyle 6$} circle[radius=.15cm];
\draw ($1/3*(A0) + 1/3*(A1) + 1/3*(B)$) node {$\scriptstyle 4$} circle[radius=.15cm];
\draw ($1/5*(A1) + 1/5*(A2) + 1/5*(A) + 1/5*(B) + 1/5*(C)$) node {$\scriptstyle 3$} circle[radius=.15cm];
\draw ($1/5*(A4) + 1/5*(A3) + 1/5*(A) + 1/5*(D) + 1/5*(C)$) node {$\scriptstyle 1$} circle[radius=.15cm];
\draw ($1/5*(A5) + 1/5*(A0) + 1/5*(A) + 1/5*(D) + 1/5*(B)$) node {$\scriptstyle 5$} circle[radius=.15cm];

\end{scope}

\begin{scope}[xshift=3cm]
\foreach \x in {0, ..., 6}{
\coordinate (A\x) at (\x*60:1);
}

\coordinate (A) at (0,0);
\coordinate (B) at (30:1/2);
\coordinate (C) at (150:1/2);
\coordinate (D) at (-90:1/2);

\draw[line width=1pt, blue] (A0) -- (B);
\draw[line width=1pt, blue] (A1) -- (B);
\draw[line width=1pt, blue] (A2) -- (C);
\draw[line width=1pt, blue] (A3) -- (C);
\draw[line width=1pt, blue] (A4) -- (D);
\draw[line width=1pt, blue] (A5) -- (D);

\draw[line width=1pt, red] (A) -- (B) (A) -- (C) (A) -- (D);

\draw ($1/3*(A3) + 1/3*(A2) + 1/3*(C)$) node {$3$};
\draw ($1/3*(A5) + 1/3*(A4) + 1/3*(D)$) node {$3$};
\draw ($1/3*(A0) + 1/3*(A1) + 1/3*(B)$) node {$3$};
\draw ($1/5*(A1) + 1/5*(A2) + 1/5*(A) + 1/5*(B) + 1/5*(C)$) node {$5$};
\draw ($1/5*(A4) + 1/5*(A3) + 1/5*(A) + 1/5*(D) + 1/5*(C)$) node {$5$};
\draw ($1/5*(A5) + 1/5*(A0) + 1/5*(A) + 1/5*(D) + 1/5*(B)$) node {$5$};
\node at (0, -1.1) {$(020202)$};
\end{scope}

\begin{scope}[xshift=6cm]

\foreach \x in {0, ..., 6}{
\coordinate (A\x) at (-\x*60 -90:1);
}

\draw[line width=1pt] (A0) -- (A1) -- (A2) -- (A3) -- (A4) -- (A5) -- (A6) (A1) -- (A3) -- (A5) -- (A1);
\foreach \x in {1, ..., 6}{
\filldraw[fill=white] (A\x) circle[radius=.2];
\node at (A\x) {$\x$};
}

\end{scope}
\end{tikzpicture}

\caption{Essential and core trees, numerical label  and  defining graph $G_{l\mathbf{Y}_6}$  for $\mathbf{Y}_6$.
The reflection of the second gives another orbit: $(012012)$. }
\label{pyre61}
\end{center}
\end{figure}

  The number of triangulations of a regular $n$-polygon modulo the action of the cyclic group is computed  in \cite{BR2012}. A reference for the integer sequence $\{N_n\}$ is $A001683$ in OEIS \cite{OEIS25}. The first values are: $N_4=1$, $N_5=1$, $N_6=4$, $N_7=6$, $N_8= 19$, see Fig. \ref{pyre4}, \ref{pyre5}, \ref{pyre61}, \ref{pyre71} and \ref{pyre81} where we are using different colors for faces in order to help identifying polygons as in Dutch web page \cite{Dutch20}. Triangles: white; quadrilaterals: yellow; pentagons: green; hexagons: light blue, \dots

\begin{figure}[ht]
\begin{center}
\begin{tikzpicture}
\begin{scope}
\foreach \x in {0, ..., 7}{
\coordinate (A\x) at (\x*360/7- 90/7:1);
}

\draw (A0) -- (A1) -- (A2) -- (A3) -- (A4) -- (A5) -- (A6) -- (A7);
\coordinate (A) at (180:1/2);
\coordinate (B) at (135:1/2);
\coordinate (C) at (90:1/2);
\coordinate (D) at (45:1/2);
\coordinate (E) at (0:1/2);

\fill[yellow] (A4) -- (A) -- (B) --(A3) -- cycle;
\fill[yellow] (A3) -- (B) -- (C) --(A2) -- cycle;
\fill[yellow] (A2) -- (C) -- (D) --(A1) -- cycle;
\fill[yellow] (A1) -- (D) -- (E) --(A0) -- cycle;
\fill[purple!50!white] (A5) -- (A) -- (B) -- (C) -- (D) -- (E) --(A6) -- cycle;

\draw[line width=1pt, blue] (A0) -- (E);
\draw[line width=1pt, blue] (A1) -- (D);
\draw[line width=1pt, blue] (A2) -- (C);
\draw[line width=1pt, blue] (A3) -- (B);
\draw[line width=1pt, blue] (A4) -- (A);
\draw[line width=1pt, blue] (A5) -- (A);
\draw[line width=1pt, blue] (A6) -- (E);

\draw[line width=1pt, red] (A) -- (B) -- (C)  -- (D) -- (E);

\draw ($1/3*(A5) + 1/3*(A4) + 1/3*(A)$) node {$\scriptstyle 1$} circle[radius=.15cm];
\draw ($1/3*(A0) + 1/3*(A6) + 1/3*(E)$) node {$\scriptstyle 6$} circle[radius=.15cm];
\draw ($1/4*(A1) + 1/4*(A0) + 1/4*(E) + 1/4*(D)$) node {$\scriptstyle 5$} circle[radius=.15cm];
\draw ($1/4*(A2) + 1/4*(A1) + 1/4*(C) + 1/4*(D)$) node {$\scriptstyle 4$} circle[radius=.15cm];
\draw ($1/4*(A3) + 1/4*(A2) + 1/4*(C) + 1/4*(B)$) node {$\scriptstyle 3$} circle[radius=.15cm];
\draw ($1/4*(A4) + 1/4*(A3) + 1/4*(B) + 1/4*(A)$) node {$\scriptstyle 2$} circle[radius=.15cm];
\draw ($1/7*(A5) + 1/7*(A6) + 1/7*(E) + 1/7*(D) + 1/7*(C) + 1/7*(B) + 1/7*(A)$) node {$\scriptstyle 7$} circle[radius=.15cm];

\end{scope}

\begin{scope}[xshift=3cm]
\foreach \x in {0, ..., 7}{
\coordinate (A\x) at (\x*360/7- 90/7:1);
}

\coordinate (A) at (180:1/2);
\coordinate (B) at (135:1/2);
\coordinate (C) at (90:1/2);
\coordinate (D) at (45:1/2);
\coordinate (E) at (0:1/2);

\draw[line width=1pt, blue] (A0) -- (E);
\draw[line width=1pt, blue] (A1) -- (D);
\draw[line width=1pt, blue] (A2) -- (C);
\draw[line width=1pt, blue] (A3) -- (B);
\draw[line width=1pt, blue] (A4) -- (A);
\draw[line width=1pt, blue] (A5) -- (A);
\draw[line width=1pt, blue] (A6) -- (E);

\draw[line width=1pt, red] (A) -- (B) -- (C)  -- (D) -- (E);

\draw ($1/3*(A5) + 1/3*(A4) + 1/3*(A)$) node {$3$};
\draw ($1/3*(A0) + 1/3*(A6) + 1/3*(E)$) node {$3$};
\draw ($1/4*(A1) + 1/4*(A0) + 1/4*(E) + 1/4*(D)$) node {$4$};
\draw ($1/4*(A2) + 1/4*(A1) + 1/4*(C) + 1/4*(D)$) node {$4$};
\draw ($1/4*(A3) + 1/4*(A2) + 1/4*(C) + 1/4*(B)$) node {$4$};
\draw ($1/4*(A4) + 1/4*(A3) + 1/4*(B) + 1/4*(A)$) node {$4$};
\draw ($1/7*(A5) + 1/7*(A6) + 1/7*(E) + 1/7*(D) + 1/7*(C) + 1/7*(B) + 1/7*(A)$) node {$7$};
\node at (0, -1.1) {$(011104)$};
\end{scope}

\begin{scope}[xshift=6cm]

\foreach \x in {0, ..., 7}{
\coordinate (A\x) at (-\x*360 / 7 -90:1);
}

\draw[line width=1pt] (A0) -- (A1) -- (A2) -- (A3) -- (A4) -- (A5) -- (A6) -- (A7) (A7) -- (A2) (A7) -- (A3) (A7)-- (A4) (A7) -- (A5);
\foreach \x in {1, ..., 7}{
\filldraw[fill=white] (A\x) circle[radius=.2];
\node at (A\x) {$\x$};
}

\end{scope}
\end{tikzpicture}

\begin{tikzpicture}
\begin{scope}
\foreach \x in {0, ..., 7}{
\coordinate (A\x) at (\x*360/7- 90/7:1);
}

\draw (A0) -- (A1) -- (A2) -- (A3) -- (A4) -- (A5) -- (A6) -- (A7);
\coordinate (A) at (180:1/2);
\coordinate (B) at (135:1/2);
\coordinate (C) at (90:1/2);
\coordinate (D) at (-45:1/2);
\coordinate (E) at (0:1/2);

\fill[yellow] (A4) -- (A) -- (B) --(A3) -- cycle;
\fill[yellow] (A3) -- (B) -- (C) --(A2) -- cycle;
\fill[yellow] (A6) -- (D) -- (E) --(A0) -- cycle;

\fill[green] (A2) -- (C) -- (D) -- (E)--(A1) -- cycle;

\fill[blue!50!white] (A5) -- (A) -- (B) -- (C) -- (D) --(A6) -- cycle;

\draw[line width=1pt, blue] (A0) -- (E);
\draw[line width=1pt, blue] (A1) -- (E);
\draw[line width=1pt, blue] (A2) -- (C);
\draw[line width=1pt, blue] (A3) -- (B);
\draw[line width=1pt, blue] (A4) -- (A);
\draw[line width=1pt, blue] (A5) -- (A);
\draw[line width=1pt, blue] (A6) -- (D);

\draw[line width=1pt, red] (A) -- (B) -- (C)  -- (D) -- (E);

\draw ($1/3*(A5) + 1/3*(A4) + 1/3*(A)$) node {$\scriptstyle 1$} circle[radius=.15cm];
\draw ($1/3*(A0) + 1/3*(A1) + 1/3*(E)$) node {$\scriptstyle 5$} circle[radius=.15cm];
\draw ($1/4*(A6) + 1/4*(A0) + 1/4*(E) + 1/4*(D)$) node {$\scriptstyle 6$} circle[radius=.15cm];
\draw ($1/5*(A2) + 1/5*(A1) + 1/5*(C) + 1/5*(D) + 1/5*(E)$) node {$\scriptstyle 4$} circle[radius=.15cm];
\draw ($1/4*(A3) + 1/4*(A2) + 1/4*(C) + 1/4*(B)$) node {$\scriptstyle 3$} circle[radius=.15cm];
\draw ($1/4*(A4) + 1/4*(A3) + 1/4*(B) + 1/4*(A)$) node {$\scriptstyle 2$} circle[radius=.15cm];
\draw ($1/6*(A5) + 1/6*(A6) + 1/6*(D) + 1/6*(C) + 1/6*(B) + 1/6*(A)$) node {$\scriptstyle 7$} circle[radius=.15cm];

\end{scope}

\begin{scope}[xshift=3cm]
\foreach \x in {0, ..., 7}{
\coordinate (A\x) at (\x*360/7- 90/7:1);
}

\coordinate (A) at (180:1/2);
\coordinate (B) at (135:1/2);
\coordinate (C) at (90:1/2);
\coordinate (D) at (-45:1/2);
\coordinate (E) at (0:1/2);

\draw[line width=1pt, blue] (A0) -- (E);
\draw[line width=1pt, blue] (A1) -- (E);
\draw[line width=1pt, blue] (A2) -- (C);
\draw[line width=1pt, blue] (A3) -- (B);
\draw[line width=1pt, blue] (A4) -- (A);
\draw[line width=1pt, blue] (A5) -- (A);
\draw[line width=1pt, blue] (A6) -- (D);

\draw[line width=1pt, red] (A) -- (B) -- (C)  -- (D) -- (E);

\draw ($1/3*(A5) + 1/3*(A4) + 1/3*(A)$) node {$3$};
\draw ($1/3*(A0) + 1/3*(A1) + 1/3*(E)$) node {$3$};
\draw ($1/4*(A6) + 1/4*(A0) + 1/4*(E) + 1/4*(D)$) node {$4$};
\draw ($1/5*(A2) + 1/5*(A1) + 1/5*(C) + 1/5*(D) + 1/5*(E)$) node {$5$};
\draw ($1/4*(A3) + 1/4*(A2) + 1/4*(C) + 1/4*(B)$) node {$4$};
\draw ($1/4*(A4) + 1/4*(A3) + 1/4*(B) + 1/4*(A)$) node {$4$};
\draw ($1/6*(A5) + 1/6*(A6) + 1/6*(D) + 1/6*(C) + 1/6*(B) + 1/6*(A)$) node {$6$};

\node at (0, -1.1) {$(0112013)$};
\end{scope}

\begin{scope}[xshift=6cm]

\foreach \x in {0, ..., 7}{
\coordinate (A\x) at (-\x*360 / 7 -90:1);
}

\draw[line width=1pt] (A0) -- (A1) -- (A2) -- (A3) -- (A4) -- (A5) -- (A6) -- (A7) (A7) -- (A2) (A7) -- (A3) (A7)-- (A4) -- (A6);
\foreach \x in {1, ..., 7}{
\filldraw[fill=white] (A\x) circle[radius=.2];
\node at (A\x) {$\x$};
}

\end{scope}
\end{tikzpicture}

\begin{tikzpicture}
\begin{scope}
\foreach \x in {0, ..., 7}{
\coordinate (A\x) at (\x*360/7- 90/7:1);
}

\draw (A0) -- (A1) -- (A2) -- (A3) -- (A4) -- (A5) -- (A6) -- (A7);
\coordinate (A) at (180:1/2);
\coordinate (B) at (-135:1/2);
\coordinate (C) at (90:1/2);
\coordinate (D) at (-45:1/2);
\coordinate (E) at (0:1/2);

\fill[yellow] (A4) -- (A) -- (B) --(A5) -- cycle;
\fill[yellow] (A6) -- (D) -- (E) --(A0) -- cycle;

\fill[green] (A2) -- (C) -- (D) -- (E)--(A1) -- cycle;
\fill[green] (A3) -- (A) -- (B) -- (C)--(A2) -- cycle;
\fill[green] (A5) -- (B) -- (C) -- (D) -- (A6) -- cycle;

\draw[line width=1pt, blue] (A0) -- (E);
\draw[line width=1pt, blue] (A1) -- (E);
\draw[line width=1pt, blue] (A2) -- (C);
\draw[line width=1pt, blue] (A3) -- (A);
\draw[line width=1pt, blue] (A4) -- (A);
\draw[line width=1pt, blue] (A5) -- (B);
\draw[line width=1pt, blue] (A6) -- (D);

\draw[line width=1pt, red] (A) -- (B) -- (C)  -- (D) -- (E);

\draw ($1/3*(A3) + 1/3*(A4) + 1/3*(A)$) node {$\scriptstyle 2$} circle[radius=.15cm];
\draw ($1/3*(A0) + 1/3*(A1) + 1/3*(E)$) node {$\scriptstyle 5$} circle[radius=.15cm];
\draw ($1/4*(A5) + 1/4*(A4) + 1/4*(A) + 1/4*(B)$) node {$\scriptstyle 1$} circle[radius=.15cm];
\draw ($1/4*(A0) + 1/4*(A6) + 1/4*(D) + 1/4*(E)$) node {$\scriptstyle 6$} circle[radius=.15cm];
\draw ($1/5*(A2) + 1/5*(A1) + 1/5*(C) + 1/5*(D) + 1/5*(E)$) node {$\scriptstyle 4$} circle[radius=.15cm];
\draw ($1/5*(A2) + 1/5*(A3) + 1/5*(C) + 1/5*(B) + 1/5*(A)$) node {$\scriptstyle 3$} circle[radius=.15cm];
\draw ($1/5*(A5) + 1/5*(A6) + 1/5*(C) + 1/5*(B) + 1/5*(D)$) node {$\scriptstyle 7$} circle[radius=.15cm];

\end{scope}

\begin{scope}[xshift=3cm]
\foreach \x in {0, ..., 7}{
\coordinate (A\x) at (\x*360/7- 90/7:1);
}

\coordinate (A) at (180:1/2);
\coordinate (B) at (-135:1/2);
\coordinate (C) at (90:1/2);
\coordinate (D) at (-45:1/2);
\coordinate (E) at (0:1/2);

\draw[line width=1pt, blue] (A0) -- (E);
\draw[line width=1pt, blue] (A1) -- (E);
\draw[line width=1pt, blue] (A2) -- (C);
\draw[line width=1pt, blue] (A3) -- (A);
\draw[line width=1pt, blue] (A4) -- (A);
\draw[line width=1pt, blue] (A5) -- (B);
\draw[line width=1pt, blue] (A6) -- (D);

\draw[line width=1pt, red] (A) -- (B) -- (C)  -- (D) -- (E);

\draw ($1/3*(A3) + 1/3*(A4) + 1/3*(A)$) node {$3$};
\draw ($1/3*(A0) + 1/3*(A1) + 1/3*(E)$) node {$3$};
\draw ($1/4*(A5) + 1/4*(A4) + 1/4*(A) + 1/4*(B)$) node {$4$};
\draw ($1/4*(A0) + 1/4*(A6) + 1/4*(D) + 1/4*(E)$) node {$4$};
\draw ($1/5*(A2) + 1/5*(A1) + 1/5*(C) + 1/5*(D) + 1/5*(E)$) node {$5$};
\draw ($1/5*(A2) + 1/5*(A3) + 1/5*(C) + 1/5*(B) + 1/5*(A)$) node {$5$};
\draw ($1/5*(A5) + 1/5*(A6) + 1/5*(C) + 1/5*(B) + 1/5*(D)$) node {$5$};

\node at (0, -1.1) {$(0220121)$};
\end{scope}

\begin{scope}[xshift=6cm]

\foreach \x in {0, ..., 7}{
\coordinate (A\x) at (-\x*360 / 7 -90:1);
}

\draw[line width=1pt] (A0) -- (A1) -- (A2) -- (A3) -- (A4) -- (A5) -- (A6) -- (A7) (A1) -- (A3) -- (A7) -- (A4) -- (A6);
\foreach \x in {1, ..., 7}{
\filldraw[fill=white] (A\x) circle[radius=.2];
\node at (A\x) {$\x$};
}

\end{scope}
\end{tikzpicture}

\begin{tikzpicture}
\begin{scope}
\foreach \x in {0, ..., 7}{
\coordinate (A\x) at (\x*360/7- 90/7:1);
}

\draw (A0) -- (A1) -- (A2) -- (A3) -- (A4) -- (A5) -- (A6) -- (A7);
\coordinate (A) at (180:1/3);
\coordinate (B) at (120:1/3);
\coordinate (C) at (120:2/3);
\coordinate (D) at (60:1/3);
\coordinate (E) at (0:1/3);

\fill[yellow] (A0) -- (E) -- (D) --(A1) -- cycle;

\fill[green] (A1) -- (D) -- (B) -- (C)--(A2) -- cycle;
\fill[green] (A3) -- (C) -- (B) -- (A)--(A4) -- cycle;

\fill[blue!50!white] (A5) -- (A) -- (B) -- (D) -- (E) -- (A6) -- cycle;

\draw[line width=1pt, blue] (A0) -- (E);
\draw[line width=1pt, blue] (A1) -- (D);
\draw[line width=1pt, blue] (A2) -- (C);
\draw[line width=1pt, blue] (A3) -- (C);
\draw[line width=1pt, blue] (A4) -- (A);
\draw[line width=1pt, blue] (A5) -- (A);
\draw[line width=1pt, blue] (A6) -- (E);

\draw[line width=1pt, red] (A) -- (B) -- (D) -- (E) (B) -- (C);

\draw ($1/3*(A5) + 1/3*(A4) + 1/3*(A)$) node {$\scriptstyle 1$} circle[radius=.15cm];
\draw ($1/3*(A0) + 1/3*(A6) + 1/3*(E)$) node {$\scriptstyle 6$} circle[radius=.15cm];
\draw ($1/3*(A3) + 1/3*(A2) + 1/3*(C)$) node {$\scriptstyle 3$} circle[radius=.15cm];
\draw ($1/4*(A0) + 1/4*(A1) + 1/4*(D) + 1/4*(E)$) node {$\scriptstyle 5$} circle[radius=.15cm];
\draw ($1/5*(A2) + 1/5*(A1) + 1/5*(C) + 1/5*(D) + 1/5*(B)$) node {$\scriptstyle 4$} circle[radius=.15cm];
\draw ($1/5*(A4) + 1/5*(A3) + 1/5*(C) + 1/5*(B) + 1/5*(A)$) node {$\scriptstyle 2$} circle[radius=.15cm];
\draw ($1/6*(A5) + 1/6*(A6) + 1/6*(A) + 1/6*(E) + 1/6*(B) + 1/6*(D)$) node {$\scriptstyle 7$} circle[radius=.15cm];

\end{scope}

\begin{scope}[xshift=3cm]
\foreach \x in {0, ..., 7}{
\coordinate (A\x) at (\x*360/7- 90/7:1);
}

\coordinate (A) at (180:1/3);
\coordinate (B) at (120:1/3);
\coordinate (C) at (120:2/3);
\coordinate (D) at (60:1/3);
\coordinate (E) at (0:1/3);

\draw[line width=1pt, blue] (A0) -- (E);
\draw[line width=1pt, blue] (A1) -- (D);
\draw[line width=1pt, blue] (A2) -- (C);
\draw[line width=1pt, blue] (A3) -- (C);
\draw[line width=1pt, blue] (A4) -- (A);
\draw[line width=1pt, blue] (A5) -- (A);
\draw[line width=1pt, blue] (A6) -- (E);

\draw[line width=1pt, red] (A) -- (B) -- (D) -- (E) (B) -- (C);

\draw ($1/3*(A5) + 1/3*(A4) + 1/3*(A)$) node {$3$};
\draw ($1/3*(A0) + 1/3*(A6) + 1/3*(E)$) node {$3$};
\draw ($1/3*(A3) + 1/3*(A2) + 1/3*(C)$) node {$3$};
\draw ($1/4*(A0) + 1/4*(A1) + 1/4*(D) + 1/4*(E)$) node {$4$};
\draw ($1/5*(A2) + 1/5*(A1) + 1/5*(C) + 1/5*(D) + 1/5*(B)$) node {$5$};
\draw ($1/5*(A4) + 1/5*(A3) + 1/5*(C) + 1/5*(B) + 1/5*(A)$) node {$5$};
\draw ($1/6*(A5) + 1/6*(A6) + 1/6*(A) + 1/6*(E) + 1/6*(B) + 1/6*(D)$) node {$6$};

\node at (0, -1.1) {$(0202103)$};
\end{scope}

\begin{scope}[xshift=6cm]

\foreach \x in {0, ..., 7}{
\coordinate (A\x) at (-\x*360 / 7 -90:1);
}

\draw[line width=1pt] (A0) -- (A1) -- (A2) -- (A3) -- (A4) -- (A5) -- (A6) -- (A7) (A2) -- (A7) -- (A3) -- (A5) -- (A7);
\foreach \x in {1, ..., 7}{
\filldraw[fill=white] (A\x) circle[radius=.2];
\node at (A\x) {$\x$};
}

\end{scope}
\end{tikzpicture}

\caption{Essential and core trees and numerical label  and defining graph $G_{l\mathbf{Y}_7}$  for $\mathbf{Y}_7$.
The vertical reflections of the second and fourth cases give another orbits: $(0310211)$ and $(0120203)$.}
\label{pyre71}
\end{center}
\end{figure}

 \begin{figure}[ht]
\begin{center}
\begin{tikzpicture}
\newcommand\octogono[1]{\node[regular polygon, draw, regular polygon sides=8, minimum size=#1cm] (p) at (0,0) {}
}
\begin{scope}
\foreach \x in {0, ..., 7}
{
\coordinate (A\x) at ({45*\x - 22.5}:1.125);
\coordinate (B\x) at (45*\x - 22.5:.75);
}
\fill[pink] (A7) -- (B0) -- (B1) -- (B2) -- (B3) -- (B4) -- (B5) -- (A6);
\foreach \x[evaluate= \x as \y using \x + 1] in {0,...,4}
{
\fill[yellow] (A\x) -- (A\y) -- (B\y) -- (B\x);
}
\octogono{2.25};
\foreach \x in {0,...,5}
{
\draw[blue, line width=1.2pt] (A\x) -- (B\x);
}
\foreach \x[evaluate= \x as \y using \x + 1] in {0,...,4}
{
\draw[red, line width=1.2pt] (B\x) -- (B\y);
}
\draw[blue, line width=1.2pt] (A7) -- (B0);
\draw[blue, line width=1.2pt] (A6) -- (B5);
\node at (-.9,.9) {$O_1$};
\end{scope}

\begin{scope}[xshift=2.5cm]
\foreach \x in {0, ..., 7}
{
\coordinate (A\x) at ({45*\x - 22.5}:1.125);
\coordinate (B\x) at (45*\x - 22.5:.75);
}
\coordinate(B4) at (180:.5);
\coordinate(B5) at (180:.75);
\fill[blue!50!white] (A7) -- (B0) -- (B1) -- (B2) -- (B3) -- (B4) -- (A6);
\foreach \x[evaluate= \x as \y using \x + 1] in {0,...,2}
{
\fill[yellow] (A\x) -- (A\y) -- (B\y) -- (B\x);
}
\fill[yellow] (A6) -- (B4) -- (B5) -- (A5);
\fill[green!50!black] (B5) -- (B4) -- (B3) -- (A3) -- (A4);
\octogono{2.25};
\foreach \x in {0,...,3, 5}
{
\draw[blue, line width=1.2pt] (A\x) -- (B\x);
}
\draw[blue, line width=1.2pt] (B5) -- (A4);
\foreach \x[evaluate= \x as \y using \x + 1] in {0,...,4}
{
\draw[red, line width=1.2pt] (B\x) -- (B\y);
}
\draw[blue, line width=1.2pt] (A7) -- (B0);
\draw[blue, line width=1.2pt] (A6) -- (B4);
\node at (-.9,.9) {$O_2^+$};
\end{scope}

\begin{scope}[xshift=5cm]
\foreach \x in {0, ..., 7}
{
\coordinate (A\x) at ({45*\x - 22.5}:1.125);
\coordinate (B\x) at (45*\x - 22.5:.75);
}
\coordinate(B3) at (150:.125);
\coordinate(B4) at (150:.5);
\coordinate(B5) at (180:.75);
\fill[blue] (A7) -- (B0) -- (B1) -- (B2) -- (B3) -- (B4) -- (A6);
\foreach \x[evaluate= \x as \y using \x + 1] in {0,1}
{
\fill[yellow] (A\x) -- (A\y) -- (B\y) -- (B\x);
}
\fill[yellow] (A3) -- (A4) -- (B5) -- (B4);
\fill[green!50!black]  (B4) -- (B3) -- (B2) -- (A2) -- (A3);
\fill[green!50!black]  (B3) -- (B4) -- (B5) -- (A5) -- (A6);
\octogono{2.25};
\foreach \x in {0,...,2, 5}
{
\draw[blue, line width=1.2pt] (A\x) -- (B\x);
}
\draw[blue, line width=1.2pt] (B4) -- (A3);
\draw[blue, line width=1.2] (B5) -- (A4);
\foreach \x[evaluate= \x as \y using \x + 1] in {0,...,4}
{
\draw[red, line width=1.2pt] (B\x) -- (B\y);
}
\draw[blue, line width=1.2pt] (A7) -- (B0);
\draw[blue, line width=1.2pt] (A6) -- (B3);
\node at (-.9,.9) {$O_3^+$};
\end{scope}
\begin{scope}[xshift=7.5cm]
\foreach \x in {0, ..., 7}
{
\coordinate (A\x) at ({45*\x - 22.5}:1.125);
\coordinate (B\x) at (45*\x - 22.5:.75);
}
\coordinate(B3) at (90:.5);
\coordinate(B4) at (-90:.5);
\fill[blue] (A7) -- (A0) -- (B1) -- (B2) -- (B3) -- (B4) ;
\fill[blue] (A3) -- (A4) -- (B5) -- (B6) -- (B4) -- (B3) ;
\foreach \x[evaluate= \x as \y using \x + 1] in {1,2,5}
{
\fill[yellow] (A\x) -- (A\y) -- (B\y) -- (B\x);
}
\fill[yellow] (A6) -- (A7) -- (B4) -- (B6);
\octogono{2.25};
\foreach \x in {1, 2, 3, 5, 6}
{
\draw[blue, line width=1.2pt] (A\x) -- (B\x);
}
\draw[blue, line width=1.2pt] (B1) -- (A0);
\draw[blue, line width=1.2] (B5) -- (A4);
\draw[blue, line width=1.2] (B4) -- (A7);
\foreach \x[evaluate= \x as \y using \x + 1] in {1,...,3}
{
\draw[red, line width=1.2pt] (B\x) -- (B\y);
}
\draw[red, line width=1.2pt] (B4) -- (B6);
\draw[red, line width=1.2pt] (B5) -- (B6);
\node at (-.9,.9) {$O_4^+$};
\end{scope}
\begin{scope}[xshift=0cm, yshift=-2.5cm]
\foreach \x in {0, ..., 7}
{
\coordinate (A\x) at ({45*\x - 22.5}:1.125);
\coordinate (B\x) at (45*\x - 22.5:.75);
}
\coordinate(B0) at (0:.875);
\coordinate(B1) at (-10:.5);
\coordinate(B2) at (70:.5);
\coordinate(B3) at (100:.5);
\coordinate(B4) at (180:.5);
\coordinate(B5) at (180:.75);
\fill[blue] (A6) -- (A7) -- (B1) -- (B2) -- (B3) -- (B4) ;
\fill[yellow] (A7) -- (A0) -- (B0) -- (B1) ;
\fill[yellow] (A2) -- (A3) -- (B3) -- (B2) ;
\fill[yellow] (A5) -- (A6) -- (B4) -- (B5) ;
\fill[green!50!black] (A1) -- (A2) -- (B2) -- (B1) -- (B0);
\fill[green!50!black] (A3) -- (A4) -- (B5) -- (B4) -- (B3);
\octogono{2.25};
\draw[blue, line width=1.2pt] (B0) -- (A0);
\draw[blue, line width=1.2pt] (B0) -- (A1);
\draw[blue, line width=1.2] (B2) -- (A2);
\draw[blue, line width=1.2] (B3) -- (A3);
\draw[blue, line width=1.2] (B5) -- (A4);
\draw[blue, line width=1.2] (B5) -- (A5);
\draw[blue, line width=1.2] (B4) -- (A6);
\draw[blue, line width=1.2] (B1) -- (A7);

\foreach \x[evaluate= \x as \y using \x + 1] in {0,...,4}
{
\draw[red, line width=1.2pt] (B\x) -- (B\y);
}

\node at (-.9,.9) {$O_5$};
\end{scope}
\begin{scope}[xshift=2.5cm, yshift=-2.5cm]
\foreach \x in {0, ..., 7}
{
\coordinate (A\x) at ({45*\x - 22.5}:1.125);
\coordinate (B\x) at (45*\x - 22.5:.75);
}
\coordinate(B0) at (0:.875);
\coordinate(B1) at (20:.6);
\coordinate(B2) at (0:.25);
\coordinate(B3) at (90:.25);
\coordinate(B4) at (180:.25);
\coordinate(B5) at (170:.75);
\fill[yellow] (A1) -- (A2) -- (B1) -- (B0) ;
\fill[yellow] (A5) -- (A6) -- (B4) -- (B5) ;
\fill[green!50!black] (A2) -- (A3) -- (B3) -- (B2) -- (B1);
\fill[green!50!black] (A3) -- (A4) -- (B5) -- (B4) -- (B3);
\fill[green!50!black] (A7) -- (A0) -- (B0) -- (B1) -- (B2);
\fill[green!50!black] (A6) -- (A7) -- (B2) -- (B3) -- (B4);
\octogono{2.25};
\draw[blue, line width=1.2pt] (B0) -- (A0);
\draw[blue, line width=1.2pt] (B0) -- (A1);
\draw[blue, line width=1.2] (B1) -- (A2);
\draw[blue, line width=1.2] (B3) -- (A3);
\draw[blue, line width=1.2] (B5) -- (A4);
\draw[blue, line width=1.2] (B5) -- (A5);
\draw[blue, line width=1.2] (B4) -- (A6);
\draw[blue, line width=1.2] (B2) -- (A7);

\foreach \x[evaluate= \x as \y using \x + 1] in {0,...,4}
{
\draw[red, line width=1.2pt] (B\x) -- (B\y);
}

\node at (-.9,.9) {$O_6^+$};
\end{scope}
\begin{scope}[xshift=5cm, yshift=-2.5cm]
\foreach \x in {0, ..., 7}
{
\coordinate (A\x) at ({45*\x - 22.5}:1.125);
\coordinate (B\x) at (45*\x - 22.5:.75);
}
\coordinate(B0) at (-30:.875);
\coordinate(B1) at (0:.65);
\coordinate(B2) at (70:.5);
\coordinate(B3) at (135:.25);
\coordinate(B4) at (135:.75);
\coordinate(B5) at (-135:.75);
\fill[yellow] (A0) -- (A1) -- (B1) -- (B0) ;
\fill[yellow] (A1) -- (A2) -- (B2) -- (B1) ;
\fill[green!50!black] (A2) -- (A3) -- (B4) -- (B3) -- (B2);
\fill[green!50!black] (A4) -- (A5) -- (B5) -- (B3) -- (B4);
\fill[pink] (A6) -- (A7) -- (B0) -- (B1) -- (B2) -- (B3) -- (B5);
\octogono{2.25};
\draw[blue, line width=1.2pt] (B0) -- (A0);
\draw[blue, line width=1.2pt] (B1) -- (A1);
\draw[blue, line width=1.2] (B2) -- (A2);
\draw[blue, line width=1.2] (B4) -- (A3);
\draw[blue, line width=1.2] (B4) -- (A4);
\draw[blue, line width=1.2] (B5) -- (A5);
\draw[blue, line width=1.2] (B5) -- (A6);
\draw[blue, line width=1.2] (B0) -- (A7);

\foreach \x[evaluate= \x as \y using \x + 1] in {0,...,3}
{
\draw[red, line width=1.2pt] (B\x) -- (B\y);
}
\draw[red, line width=1.2pt] (B3) -- (B5);

\node at (-.9,.9) {$O_7^+$};
\end{scope}
\begin{scope}[xshift=7.5cm, yshift=-2.5cm]
\foreach \x in {0, ..., 7}
{
\coordinate (A\x) at ({45*\x - 22.5}:1.125);
\coordinate (B\x) at (45*\x - 22.5:.75);
}
\coordinate(B0) at (0:.875);
\coordinate(B1) at (0:.65);
\coordinate(B2) at (70:.5);
\coordinate(B3) at (135:.25);
\coordinate(B4) at (135:.75);
\coordinate(B5) at (-135:.75);
\fill[yellow] (A7) -- (A0) -- (B0) -- (B1) ;
\fill[green!50!black] (A1) -- (A2) -- (B2) -- (B1) -- (B0);
\fill[green!50!black] (A2) -- (A3) -- (B4) -- (B3) -- (B2);
\fill[green!50!black] (A4) -- (A5) -- (B5) -- (B3) -- (B4);
\fill[blue] (A6) -- (A7) -- (B1) -- (B2) -- (B3) -- (B5);
\octogono{2.25};
\draw[blue, line width=1.2pt] (B0) -- (A0);
\draw[blue, line width=1.2pt] (B0) -- (A1);
\draw[blue, line width=1.2] (B2) -- (A2);
\draw[blue, line width=1.2] (B4) -- (A3);
\draw[blue, line width=1.2] (B4) -- (A4);
\draw[blue, line width=1.2] (B5) -- (A5);
\draw[blue, line width=1.2] (B5) -- (A6);
\draw[blue, line width=1.2] (B1) -- (A7);

\foreach \x[evaluate= \x as \y using \x + 1] in {0,...,3}
{
\draw[red, line width=1.2pt] (B\x) -- (B\y);
}
\draw[red, line width=1.2pt] (B3) -- (B5);

\node at (-.9,.9) {$O_8^+$};
\end{scope}
\begin{scope}[xshift=0cm, yshift=-5cm]
\foreach \x in {0, ..., 7}
{
\coordinate (A\x) at ({45*\x - 22.5}:1.125);
\coordinate (B\x) at (45*\x - 22.5:.75);
}
\coordinate(B0) at (-22:.5);
\coordinate(B1) at (20:.5);
\coordinate(B2) at (90:.25);
\coordinate(B3) at (90:.75);
\coordinate(B4) at (200:.5);
\coordinate(B5) at (180:.75);
\fill[yellow] (A0) -- (A1) -- (B1) -- (B0) ;
\fill[yellow] (A5) -- (A6) -- (B4) -- (B5) ;
\fill[green!50!black] (A1) -- (A2) -- (B3) -- (B2) -- (B1);
\fill[blue] (A3) -- (A4) -- (B5) -- (B4) -- (B2) -- (B3);
\fill[blue] (A6) -- (A7) -- (B0) -- (B1) -- (B2) -- (B4);
\octogono{2.25};
\draw[blue, line width=1.2pt] (B0) -- (A0);
\draw[blue, line width=1.2pt] (B1) -- (A1);
\draw[blue, line width=1.2] (B3) -- (A2);
\draw[blue, line width=1.2] (B3) -- (A3);
\draw[blue, line width=1.2] (B5) -- (A4);
\draw[blue, line width=1.2] (B5) -- (A5);
\draw[blue, line width=1.2] (B4) -- (A6);
\draw[blue, line width=1.2] (B0) -- (A7);

\foreach \x[evaluate= \x as \y using \x + 1] in {0, 1, 2, 4}
{
\draw[red, line width=1.2pt] (B\x) -- (B\y);
}
\draw[red, line width=1.2pt] (B2) -- (B4);

\node at (-.9,.9) {$O_9^+$};
\end{scope}
\begin{scope}[xshift=2.5cm, yshift=-5cm]
\foreach \x in {0, ..., 7}
{
\coordinate (A\x) at ({45*\x - 22.5}:1.125);
\coordinate (B\x) at (45*\x - 22.5:.75);
}
\coordinate(B0) at (0:.75);
\coordinate(B1) at (30:.5);
\coordinate(B2) at (90:0);
\coordinate(B3) at (150:.5);
\coordinate(B4) at (180:.75);
\coordinate(B5) at (-90:.5);
\fill[yellow] (A1) -- (A2) -- (B1) -- (B0) ;
\fill[yellow] (A3) -- (A4) -- (B4) -- (B3) ;
\fill[green!50!black] (A2) -- (A3) -- (B3) -- (B2) -- (B1);
\fill[blue] (A5) -- (A6) -- (B5) -- (B2) -- (B3) -- (B4);
\fill[blue] (A7) -- (A0) -- (B0) -- (B1) -- (B2) -- (B5);
\octogono{2.25};
\draw[blue, line width=1.2pt] (B0) -- (A0);
\draw[blue, line width=1.2pt] (B0) -- (A1);
\draw[blue, line width=1.2] (B1) -- (A2);
\draw[blue, line width=1.2] (B3) -- (A3);
\draw[blue, line width=1.2] (B4) -- (A4);
\draw[blue, line width=1.2] (B4) -- (A5);
\draw[blue, line width=1.2] (B5) -- (A6);
\draw[blue, line width=1.2] (B5) -- (A7);

\foreach \x[evaluate= \x as \y using \x + 1] in {0, ..., 3}
{
\draw[red, line width=1.2pt] (B\x) -- (B\y);
}
\draw[red, line width=1.2pt] (B2) -- (B5);

\node at (-.9,.9) {$O_{10}$};
\end{scope}
\begin{scope}[xshift=5cm, yshift=-5cm]
\foreach \x in {0, ..., 7}
{
\coordinate (A\x) at ({45*\x - 22.5}:1.125);
\coordinate (B\x) at (45*\x - 22.5:.75);
}
\coordinate(B0) at (-30:.75);
\coordinate(B1) at (30:.75);
\coordinate(B2) at (0:.25);
\coordinate(B3) at (180:.25);
\coordinate(B4) at (150:.75);
\coordinate(B5) at (210:.75);
\fill[green!50!black] (A0) -- (A1) -- (B1) -- (B2) -- (B0);
\fill[green!50!black] (A4) -- (A5) -- (B5) -- (B3) -- (B4);
\fill[blue] (A2) -- (A3) -- (B4) -- (B3) -- (B2) -- (B1);
\fill[blue] (A6) -- (A7) -- (B0) -- (B2) -- (B3) -- (B5);
\octogono{2.25};
\draw[blue, line width=1.2pt] (B0) -- (A0);
\draw[blue, line width=1.2pt] (B1) -- (A1);
\draw[blue, line width=1.2] (B1) -- (A2);
\draw[blue, line width=1.2] (B4) -- (A3);
\draw[blue, line width=1.2] (B4) -- (A4);
\draw[blue, line width=1.2] (B5) -- (A5);
\draw[blue, line width=1.2] (B5) -- (A6);
\draw[blue, line width=1.2] (B0) -- (A7);

\foreach \x[evaluate= \x as \y using \x + 1] in {1, 2, 3}
{
\draw[red, line width=1.2pt] (B\x) -- (B\y);
}
\draw[red, line width=1.2pt] (B0) -- (B2);
\draw[red, line width=1.2pt] (B3) -- (B5);

\node at (-.9,.9) {$O_{11}$};
\end{scope}
\begin{scope}[xshift=7.5cm, yshift=-5cm]
\foreach \x in {0, ..., 7}
{
\coordinate (A\x) at ({45*\x - 22.5}:1.125);
\coordinate (B\x) at (45*\x - 22.5:.75);
}
\coordinate(B0) at (-30:.75);
\coordinate(B1) at (20:.75);
\coordinate(B2) at (90:.25);
\coordinate(B3) at (160:.75);
\coordinate(B4) at (210:.75);
\coordinate(B5) at (90:.75);
\fill[yellow] (A0) -- (A1) -- (B1) -- (B0) ;
\fill[yellow] (A4) -- (A5) -- (B4) -- (B3) ;
\fill[green!50!black] (A1) -- (A2) -- (B5) -- (B2) -- (B1);
\fill[green!50!black] (A3) -- (A4) -- (B3) -- (B2) -- (B5);
\fill[pink] (A6) -- (A7) -- (B0) -- (B1) -- (B2) -- (B3) -- (B4);
\octogono{2.25};
\draw[blue, line width=1.2pt] (B0) -- (A0);
\draw[blue, line width=1.2pt] (B1) -- (A1);
\draw[blue, line width=1.2] (B5) -- (A2);
\draw[blue, line width=1.2] (B5) -- (A3);
\draw[blue, line width=1.2] (B3) -- (A4);
\draw[blue, line width=1.2] (B4) -- (A5);
\draw[blue, line width=1.2] (B4) -- (A6);
\draw[blue, line width=1.2] (B0) -- (A7);

\foreach \x[evaluate= \x as \y using \x + 1] in {0, ..., 3}
{
\draw[red, line width=1.2pt] (B\x) -- (B\y);
}
\draw[red, line width=1.2pt] (B2) -- (B5);

\node at (-.9,.9) {$O_{12}$};
\end{scope}
\end{tikzpicture}
\caption{The $19$ different simple $8$-pyramitoids.
We have to add the symmetric $O_j^-$ for $j=2,3,4,6,\dots,9$.}
\label{pyre81}
\end{center}
\end{figure}

Pyramids appear in many polyhedra as neighborhoods of isolated
non-simple vertices, so the study of pyramitoids (simple or not) can
be very useful for the study of the total or partial smoothings of the
intersections of ellipsoids corresponding to those polyhedra.


 \begin{thm}
 Every $n$-pyramitoid is in a neighbourhood of the $n$-pyramid  $\mathcal{Y}_n$ in their domain of deformations and therefore it is a (partial or total) smoothing of  $\mathcal{Y}_n$. On the other side, a complete smoothing of the $n$-pyramid $\mathcal{Y}_n$ becomes a simple $n$-pyramitoid.
 \end{thm}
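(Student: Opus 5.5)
The plan is to work in the space of deformations of the defining inequalities. Realize $\mathcal{Y}_n$ as in the example of \S\ref{sec:graphs}: the basis $p_n$ is supported by $z=0$ and the lateral faces by $\ell_j:=a_jx+b_jy+c_j(1-z)\ge 0$, $j=1,\dots,n$, all passing through the apex $(0,0,1)$. Deformations of the polyhedron are small perturbations of the coefficients of these $n+1$ affine functions, equivalently of the affine subspace $\Psi_n(\mathbb{R}^3)\subset\mathbb{R}^{n+1}$. The statement splits into two directions, and I would prove each by controlling what can happen near the apex.

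\emph{Complete smoothings are simple pyramitoids.} Take a generic small perturbation $\ell_j^\varepsilon$ of the lateral planes, keeping $z=0$ fixed. The basis vertices of $\mathcal{Y}_n$ are simple and the edges of the basis are transversal intersections, so for $\varepsilon$ small they persist: $z=0$ still cuts an $n$-gon $p_n^\varepsilon$ having an edge in common with each perturbed lateral face. Every lateral face of $\mathcal{Y}_n$ is a triangle, hence it has an edge on the basis and the apex strictly above it. Therefore each perturbed half-space $\ell_j^\varepsilon\ge 0$ still contributes a nonempty face, and no face disappears. No new face can appear, since the face set is indexed by the $n+1$ inequalities. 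So the result is a polyhedron with $n+1$ faces, one of which is an $n$-gon meeting all the others, i.e.\ an $n$-pyramitoid. If the perturbation is generic, no four planes meet in a point, so every vertex is simple. Conversely, a \emph{complete} smoothing is by definition one in which all vertices become simple, and by the above it is a simple $n$-pyramitoid.

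\emph{Every $n$-pyramitoid is near $\mathcal{Y}_n$.} Given $\mathbf{Y}_n$ with basis $p_n$, use the projection $\pi_n$ and the essential decomposition $\cw{\mathbf{Y}_n}$. The idea is to realize $\mathbf{Y}_n$ with all of its essential tree at height close to $1$ and concentrated near the point over the center. Concretely, I would take the realization of $\mathbf{Y}_n$ and apply a projective transformation fixing the plane $z=0$ that contracts the essential tree toward the apex $(0,0,1)$; the transformation is the composition of a homothety centered at the apex with scaling factor $t\to 0$ applied to the core tree. An alternative is to realize the essential decomposition $\cw{\mathbf{Y}_n}$ as the projection of a convex piecewise-linear lift (a regular subdivision) with tiny slopes added to the pyramid cone. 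Then each lateral plane tends to the corresponding plane of $\mathcal{Y}_n$ as $t\to 0$, so $\mathbf{Y}_n$ lies in an arbitrarily small neighbourhood of $\mathcal{Y}_n$ in the domain of deformations. Non-simple vertices are allowed, which yields the partial smoothings.

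\emph{Main obstacle.} The hardest step is the realizability in the second direction: proving that every combinatorial essential decomposition $\cw{\mathbf{Y}_n}$ is the projection of a convex polyhedral lift over a fixed $n$-gon with prescribed lateral planes close to those of $\mathcal{Y}_n$. For simple pyramitoids I would avoid this by induction, using the theorem that every simple $n$-pyramitoid arises by truncating a basis vertex of a simple $(n-1)$-pyramitoid. A tetrahedron is a small perturbation of a 3-pyramid. The difficulty is that a truncation adds a face, so the induction must be carried out as follows: deform the $n$-pyramid so that one lateral face degenerates, and note that small truncations of the basis vertex of the $(n-1)$-configuration stay near $\mathcal{Y}_n$. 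For non-simple pyramitoids I would then take limits of simple ones, obtained by sliding vertices of the core tree together. This corresponds to contracting edges of the core tree, equivalently merging triangles of $\ct{\mathbf{Y}_n;q_n}$ as in Lemma~\ref{lema:min_tri}.
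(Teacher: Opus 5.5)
Your first half (a complete smoothing of $\mathcal{Y}_n$ is a simple $n$-pyramitoid) is correct. It is the persistence argument the paper gives in its introduction:
\begin{itemize}
\item the basis vertices are simple, so the basis stays an $n$-gon sharing an edge with every lateral face;
\item no face can disappear or appear;
\item genericity makes the new vertices simple.
\end{itemize}
The paper's proof of the theorem treats only the other direction, by ``contracting the core tree to a point''.

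For that direction your first idea is the right way to make the paper's contraction rigorous. As written, however, it is not a well-defined map. A homothety centred at the apex does not fix $\{z=0\}$. A projective transformation cannot be applied ``to the core tree'' alone. And it is the core tree, not the essential tree, that collapses, since the leaves keep their endpoints on the basis. What you want is the central collineation with centre $C=(0,0,1)$ and axis $\{z=0\}$:
\[
H_\mu(x,y,z)=\Bigl(\frac{x}{1+\mu z},\ \frac{y}{1+\mu z},\ \frac{(1+\mu)z}{1+\mu z}\Bigr),\qquad \mu>0 .
\]
It fixes the basis plane pointwise, and its denominator is $\geq 1$ on $\{z\geq 0\}\supset\mathbf{Y}_n$. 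Hence $H_\mu(\mathbf{Y}_n)$ is a convex polyhedron with the same face lattice and the same basis. As $\mu\to\infty$, every point with $z>0$ tends to $C$. Each lateral plane contains a fixed basis edge and a point with $z>0$, so it tends to the plane through that edge and $C$. This handles simple and non-simple pyramitoids at once. It proves the statement in the sense of the paper's proof, namely closeness to the pyramid over the basis of the given realization. No induction and no limit of simple pyramitoids is needed.

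Your ``main obstacle'' is a real issue only if you insist, as your set-up does, on a fixed $\mathcal{Y}_n$ with a prescribed (e.g.\ regular) basis. The route you propose for it does not work.
\begin{itemize}
\item A small truncation at a basis vertex $w$ of a realization close to the $(n-1)$-pyramid produces a cutting plane whose distance from the apex $C$ is bounded below. Meanwhile the old lateral planes force the apex of any nearby $n$-pyramid to be close to $C$, so the result is near no $n$-pyramid.
\item To repair this you would need a thin sliver cut, running from a short edge at $w$ along the whole leaf edge to near $C$. Even then you land near the pyramid over $p_{n-1}$ with a corner cut off, not near the prescribed one.
\item Obtaining non-simple pyramitoids as limits of simple ones presupposes that contracting the chosen core edges is geometrically realizable, which is exactly the point at issue.
\end{itemize}
The missing ingredient is that every subdivision of a convex polygon by non-crossing diagonals is regular. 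Rotate the lateral planes of $\mathcal{Y}_n$ about the edges of its basis by small parameters. The combinatorics of the dome is then dual to the regular subdivision of the polar polygon of the basis (taken with respect to the foot of the apex, and a realization of $q_n$) induced by these parameters. So your lifting alternative realizes every pyramitoid, simple or not, arbitrarily close to any prescribed $n$-pyramid.
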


\begin{proof}
It is clear that given a  $n$-pyramitoid, one can contract its core tree to a single point. The result is an $n$-pyramid $\mathcal{Y}_n$. We can also proceed by contracting
    each edge of the core tree separately and all the intermediate graphs correspond to $n$-pyramitoids in the deformation domain of $\mathcal{Y}_n$.
\end{proof}

One consequence of Proposition~\ref{prop:labels}\ref{prop5} and~\ref{prop6} is that we can recover all the simple pyramitoids
by successive truncations of a tetrahedron, i.e., pyramitoids are dual stacked polytopes. Let us recall the effect of one truncation.

 \begin{thm}[{\cite[\S9.9.2]{LdM2023}}]\label{tzpn}
 Let $\mathbf{Y}_n$ be an $n$-pyramitoid obtained from an $(n-1)$-pyramitoid $\mathbf{Y}_{n-1}$ by truncating a vertex in the basis. Call $Z_n= Z(\mathbf{Y}_n)$ and $Z_{n-1}= Z(\mathbf{Y}_{n-1})$ (see Remark{\rm~\ref{rem:Z}}). Then
 \begin{equation}\label{ZdePn}
   Z_n=Z_{n-1}\, \# \,Z_{n-1} \,\#_{i=1} ^{(2^{n - 3}- 1 )}  (\mathbb{S}^2 \times\mathbb{S}^1).
 \end{equation}
If $\mathbf{Y}_n$  is simple, then $Z_n$ is a connected sum of $b_n:=2^{n - 3} (n - 4) + 1$ copies of $\mathbb{S}^2 \times\mathbb{S}^1$.
 \end{thm}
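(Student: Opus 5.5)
The plan is to treat the truncation as a cut-and-paste operation on real moment-angle manifolds, using the orbifold description of Remark~\ref{rem:Z}. The truncated vertex $v$ lies in the basis, so it lies on two basis edges and one leaf of $\esencial{\mathbf{Y}_{n-1}}$. Hence it is a simple vertex and belongs to exactly three faces. Write $F$ for the new triangular face of $\mathbf{Y}_n$. Recall that $\mathbf{Y}_{n-1}$ has $n$ faces and $\mathbf{Y}_n$ has $n+1$.

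\emph{Step 1: preimage of $v$.} Since $v$ is a $3$-singular point with isotropy $(\mathbb{Z}/2)^3$ inside $(\mathbb{Z}/2)^n$, its preimage in $Z_{n-1}$ consists of $2^n/2^3=2^{n-3}$ points. A small neighbourhood of $v$ in $\mathbf{Y}_{n-1}$ is a corner $\mathbb{R}^3_{\ge0}$. Its preimage near each point is a $3$-ball, obtained by reflecting the corner in the three coordinate planes.

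\emph{Step 2: the half with $F$ as boundary.} Consider the orbifold structure on $\mathbf{Y}_n$ in which $F$ consists of boundary (non-mirror) points, as in Example~\ref{ejm:dim3}. Take its Galois cover associated with the $n$ reflections in the faces other than $F$. This cover is
\[
W:=Z_{n-1}\setminus \textstyle\bigsqcup_{j=1}^{2^{n-3}} \mathring{B}_j ,
\]
because cutting off $v$ removes the corner neighbourhood, and over it the cover is the union of the $2^{n-3}$ open balls. The triangle $F$ lifts to the boundary spheres of these balls: each sphere is $8$ copies of the triangle, reflected in its three sides.

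\emph{Step 3: doubling.} Now also make $F$ a mirror. The extra $\mathbb{Z}/2$ factor of $(\mathbb{Z}/2)^{n+1}$ doubles $W$ along its boundary, so $Z_n=W\cup_\partial W$. Gluing two connected $3$-manifolds along one sphere gives $Z_{n-1}\#Z_{n-1}$. Each of the remaining $2^{n-3}-1$ sphere identifications adds a $1$-handle joining the manifold to itself, that is, a summand $\mathbb{S}^2\times\mathbb{S}^1$. This yields \eqref{ZdePn}.

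The main obstacle is Step~2: one must justify carefully that the partial cover really is $Z_{n-1}$ with those balls removed. I would do this chart by chart, comparing the orbifold charts of $\mathbf{Y}_n$ near $F$ with those of $\mathbf{Y}_{n-1}$ near $v$. Alternatively, I would use the concrete model of Remark~\ref{rem:Z}, in which truncation corresponds to adding a coordinate $r_{n+1}$.

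\emph{Step 4: the simple case.} By the earlier theorem, every simple $n$-pyramitoid arises by successive truncations from a tetrahedron, and $Z(\text{tetrahedron})=\mathbb{S}^3$, so $b_3=0$. Since $\mathbb{S}^3$ is neutral for connected sum, induction gives
\[
b_n=2b_{n-1}+2^{n-3}-1 .
\]
Substituting $b_{n-1}=2^{n-4}(n-5)+1$ gives
\[
2^{n-3}(n-5)+2+2^{n-3}-1=2^{n-3}(n-4)+1,
\]
as claimed. This agrees with $b_3=0$ and $b_4=1$.
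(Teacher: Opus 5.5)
Your proof is correct, but for \eqref{ZdePn} it does not follow the paper's route. The paper gives no cut-and-paste argument: it simply quotes Theorem~2.1 of Gitler--L\'opez de Medrano for the connected-sum formula. It then does exactly your Step~4, namely the recursion $b_n=2b_{n-1}+2^{n-3}-1$ starting from $Z_3=\mathbb{S}^3$, followed by the induction.

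Your Steps~1--3 remove the $2^{n-3}$ balls over the simple vertex $v$ and double the resulting $W$ along the octahedral spheres over the new triangle $F$. This is a self-contained proof of the quoted result, and essentially the mechanism behind it. It also makes the separating sphere and the $2^{n-3}-1$ non-separating spheres explicit.

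Your argument is the ``sphere'' counterpart of the paper's later constructive argument (Corollary~\ref{2hn}), which instead doubles along the genus-$b_n$ surface over the basis. That version reaches the simple case without induction, but only after identifying $Z_n^*$ as a handlebody.

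Two small points:
\begin{itemize}
\item Step~2 is not a real obstacle. With the basic construction $Z(P)=P\times(\mathbb{Z}/2)^m/\!\sim$, both $W$ and $Z_n=W\cup_{\mathrm{id}}W$ are immediate.
\item In Step~3 you should say why the self-gluings give $\mathbb{S}^2\times\mathbb{S}^1$ rather than the twisted $\mathbb{S}^2$-bundle over $\mathbb{S}^1$. The reason is that $W$ is orientable: it is a regular level set of the quadratic map $\mathbb{R}^n\to\mathbb{R}^{n-3}$, away from singular points if $\mathbf{Y}_{n-1}$ is not simple. The identity double of an oriented $W$ is then oriented by $o\cup(-o)$, so every gluing is orientation-compatible.
\end{itemize}
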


 \begin{coro}
Let $\mathbf{Y}_n$ be a simple pyramitoid. Then
$
  \ker\left( \pi _1^{\orb} (\mathbf{Y}_n) \longrightarrow (\mathbb{Z}/2)^{n+1} \right)$
is the free group of $b_n$ generators.
\end{coro}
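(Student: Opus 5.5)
The plan is to identify the kernel with the ordinary fundamental group of $Z(\mathbf{Y}_n)$ via covering theory, and then read off that group from Theorem~\ref{tzpn}.

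First, by Remark~\ref{rem:Z}, the epimorphism $\rho\colon\pi_1^{\orb}(\mathbf{Y}_n^{\orb})\to(\mathbb{Z}/2)^{n+1}$ sends the $n+1$ face reflections to the canonical basis. It induces the orbifold Galois cover $Z(\mathbf{Y}_n)\to\mathbf{Y}_n^{\orb}$, whose deck group is $(\mathbb{Z}/2)^{n+1}$.

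By the Galois correspondence for orbifold covers (with $\pi_1^{\orb}$ defined as the deck group of the universal orbifold cover, as in~\cite{bmp:03}), the subgroup $\ker\rho$ is the orbifold fundamental group of the cover $Z(\mathbf{Y}_n)$.

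The key point is that $Z(\mathbf{Y}_n)$ has trivial orbifold structure, i.e.\ it is a manifold. This holds because $\rho$ is injective on every isotropy group. Since $\mathbf{Y}_n$ is simple, each local group is generated by at most three reflections in pairwise adjacent faces. These are sent to distinct basis vectors, so they generate a subgroup isomorphic to $(\mathbb{Z}/2)^k$, with $k\le 3$, injectively. Hence $\ker\rho$ is torsion free and acts freely on the universal orbifold cover, which is a genuine simply connected manifold.

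It follows that $\ker\rho\cong\pi_1(Z(\mathbf{Y}_n))$. By Theorem~\ref{tzpn}, since $\mathbf{Y}_n$ is simple, $Z(\mathbf{Y}_n)$ is a connected sum of $b_n$ copies of $\mathbb{S}^2\times\mathbb{S}^1$. By Seifert--van Kampen, using that the connecting spheres are simply connected and $\pi_1(\mathbb{S}^2\times\mathbb{S}^1)=\mathbb{Z}$, its fundamental group is the free group of rank $b_n$.

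The only step needing care is the manifold/freeness claim. Local injectivity of $\rho$ on isotropy groups is exactly the condition for the associated cover to be a manifold. It is already implicit in Remark~\ref{rem:Z}, where $Z(P)$ is asserted to be a manifold, so I would simply invoke it there.
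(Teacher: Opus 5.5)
Your proposal is correct and follows the route the paper intends: the paper gives no separate proof of this corollary and treats it as an immediate consequence of Theorem~\ref{tzpn}. That is, the kernel is $\pi_1$ of the manifold cover $Z(\mathbf{Y}_n)$ of Remark~\ref{rem:Z}, and a connected sum of $b_n$ copies of $\mathbb{S}^2\times\mathbb{S}^1$ has free fundamental group of rank $b_n$. Your discussion of why $\rho$ is injective on isotropy groups, which implicitly uses that the orbifold is good, only makes explicit what the paper takes for granted.
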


\begin{proof}[Proof of Theorem{\rm~\ref{tzpn}}]
  The equation \eqref{ZdePn} follows from  \cite[Theorem 2.1]{Gi-LdM}.
  Because $Z_3=Z(\mathbf{Y}_3)=S^3$, and $Z_4=Z(\mathbf{Y}_4)= \mathbb{S}^2 \times\mathbb{S}^1$, it follows that
  \begin{equation*}
    Z_n= \connectedsum_{i=1} ^{b_n}   (\mathbb{S}^2 \times\mathbb{S}^1),\qquad
       b_n=2 b_{n-1} + 2^{n-3}-1,\quad n>3, \quad b_3=0.
  \end{equation*}
   It is easy to prove by induction that
   \begin{equation}\label{bnan-3}
    b_n=(n-4)2^{n-3}+1,\quad n>3
  \end{equation}
  Indeed, \eqref{bnan-3} is true for $n=4$: $b_4=1$.  Let's suppose that \eqref{bnan-3} is true up to $n-1$ and let's verify it for $n$:
  \begin{eqnarray*}
    b_n&=&2 b_{n-1} + 2^{n-3}-1 = 2((n-5)2^{n-4}+1)+2^{n-3}-1= \\ &=&(n-5)2^{n-3}+2+2^{n-3}-1=
    (n-4)2^{n-3}+1
  \end{eqnarray*}
 Therefore, in the simple case
   \begin{equation*}
   Z_n=  \connectedsum _{i=1} ^{2^{n - 3} (n - 4)  + 1 } (\mathbb{S}^2 \times\mathbb{S}^1)_i,
 \end{equation*}
Note that $b_n = a_{n-3}$ where $\{ a_n\}$ is the integer sequence $A000337$ in OEIS \cite{OEIS25}. The integer sequence $b_n$ is also the one giving the genus of the $n$-cube (\cite{B-H1965}).
 \end{proof}

  Because we are interested in the smoothing of an isolated $n$-singular vertex in a general polyhedron, we study next the manifold associated to the lateral faces of a simple $n$\nobreakdash-pyramitoid.  This gives also a more constructive approach to the result obtained in Theorem \ref{tzpn}.
  Let
   $ \pi_n: Z_n \rightarrow \mathbf{Y}_n^{\orb}$
  be the orbifold cover map from the manifold $Z_n$ to the mirror $n$-pyramitoid.

  In the same way, let $ \pi_n^*: Z_n^* \rightarrow \orbdomo{n}$ be the orbifold
  cover map corresponding to the reflections of the faces of the dome.  We obtain a manifold $Z_n^*$ with boundary, is the preimage by $\partial Z_n^*=\pi_n^{-1}(p_n)={\pi_n^*}^{-1}(p_n)$ of the basis $p_n$ of the $n$-piramitoid $\mathbf{Y}_n$.

  \begin{figure}[ht]
\begin{center}
\begin{tikzpicture}
\draw[red, line width=1pt] (-1, -.1) -- node[above, black] {$l_1$} (0,0);
\draw[green!50!black, line width=1pt] (0, 0) -- node[above, black] {$l_2$}(1, -.1);
\draw[blue, line width=1pt] (-1.5, .2)-- (-1, -.1) -- (-1.5, -.4);
\draw[blue, line width=1pt] (1.5, .2)-- (1, -.1) -- (1.5, -.4);
\draw[blue, line width=1pt] (0,0) -- (0, .75);
\begin{scope}[xshift=5cm]
\draw (0,0) circle[radius=.5cm];
\draw (0,0) circle[radius=2cm];

\draw (1.250,0) circle[radius=.25cm];
\draw (.5,0) arc[start angle=-180, end angle=0,x radius=.25cm, y radius=.125cm];
\draw[dotted] (.5,0) arc[start angle=180, end angle=0,x radius=.25cm, y radius=.125cm];
\draw (1.5,0) arc[start angle=-180, end angle=0,x radius=.25cm, y radius=.125cm];
\draw[dotted] (1.5,0) arc[start angle=180, end angle=0,x radius=.25cm, y radius=.125cm];

\draw (0,1.250) circle[radius=.25cm];
\draw (0,.5) arc[start angle=-90, end angle=90,y radius=.25cm, x radius=.125cm];
\draw[dotted] (0, .5) arc[start angle=270, end angle=90,y radius=.25cm, x radius=.125cm];
\draw (0,1.5) arc[start angle=-90, end angle=90,y radius=.25cm, x radius=.125cm];
\draw[dotted] (0, 1.5) arc[start angle=270, end angle=90,y radius=.25cm, x radius=.125cm];

\draw (-1.250,0) circle[radius=.25cm];
\draw (-1,0) arc[start angle=-180, end angle=0,x radius=.25cm, y radius=.125cm];
\draw[dotted] (-1,0) arc[start angle=180, end angle=0,x radius=.25cm, y radius=.125cm];
\draw (-2,0) arc[start angle=-180, end angle=0,x radius=.25cm, y radius=.125cm];
\draw[dotted] (-2,0) arc[start angle=180, end angle=0,x radius=.25cm, y radius=.125cm];

\draw (0,-1.250) circle[radius=.25cm];
\draw (0,-1) arc[start angle=-90, end angle=90,y radius=.25cm, x radius=.125cm];
\draw[dotted] (0, -1) arc[start angle=270, end angle=90,y radius=.25cm, x radius=.125cm];
\draw (0,-2) arc[start angle=-90, end angle=90,y radius=.25cm, x radius=.125cm];
\draw[dotted] (0, -2) arc[start angle=270, end angle=90,y radius=.25cm, x radius=.125cm];

\draw[rotate around={45:(0,0)}] (.5,0) arc[start angle=-180, end angle=0, x radius=.75cm, y radius=.25cm];
\draw[rotate around={45:(0,0)}, dotted] (.5,0) arc[start angle=180, end angle=0, x radius=.75cm, y radius=.25cm];

\draw[line width=1pt, green!50!black] (45:1.5) -- (0:.75) -- (-45: 1.5) -- (0:1.75) -- node[above=3pt, pos=.25, black] {$l_2$} cycle;
\draw[line width=1pt, red, rotate around={90:(0,0)}] (45:1.5) --  (0:.75) -- (-45: 1.5) -- node[above=3pt, pos=.25, black] {$l_1$} (0:1.75) --  cycle;
\draw[line width=1pt, green!50!black, rotate around={180:(0,0)}] (45:1.5) -- (0:.75) -- (-45: 1.5) -- (0:1.75) -- cycle;
\draw[line width=1pt, red, rotate around={-90:(0,0)}] (45:1.5) -- (0:.75) -- (-45: 1.5) -- (0:1.75) -- cycle;

\end{scope}
\end{tikzpicture}
\caption{The core tree $\alma{\mathbf{Y}_{5}}=l_1\cup l_2$ and the core graph $G_n$ in  $H_5= Z_5^*\subset Z_5$.  }\label{ZdeP5}
\end{center}
\end{figure}

We are going to study $Z_n^*$ and actually we are going to see that it is a handlebody.
Handlebodies are the pieces of Heegaard splittings of $3$-manifolds and
the key point to describe these splittings is to detect the so-called
cutting curves. To get these curves it is useful to see the handlebody as the
regular neighbourhood of a graph.

\begin{dfn}
A  \emph{core} $\mathcal{C}$ of  a handlebody $H_n$ is any minimal deformation retract of $H_n$. If $\mathcal{C}$ is a graph, we say that $\mathcal{C}$ is a \emph{core graph} of the handlebody.
\end{dfn}

 \begin{thm}\label{hn}
    The manifold $Z_n^*$ is a handlebody $H_{b_n}$ having ${\pi_n^*}^{-1} (\alma{\mathbf{Y}_{n}})$ as core graph.
  \end{thm}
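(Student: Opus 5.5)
The plan is to show that $\orbdomo{n}$ is, as a right-angled orbifold, a regular neighbourhood of the core tree $\alma{\mathbf{Y}_n}$, lift this description to the cover $Z_n^*$, and then read off the genus from an Euler characteristic count.

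\emph{Step 1: a stratum-preserving retraction.} Call the closed strata of $\orbdomo{n}$ the intersections of lateral faces: the faces, the edges of the essential tree, and the vertices. I will construct a deformation retraction $r_t$ of $\mathbf{Y}_n$ onto $\alma{\mathbf{Y}_n}$ such that each point stays, for all $t$, inside every lateral face it starts in. Such a retraction is equivariant for the local reflection groups and therefore lifts to $Z_n^*$.

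To build it, I use the projection $\pi_n$ and the essential decomposition $\cw{\mathbf{Y}_n;p_n}$. The key observation is that, combinatorially, $\mathbf{Y}_n$ is the mapping cylinder of a map $p_n\to\esencialp{\mathbf{Y}_n}$. This map sends each $2$-cell of $\cw{\mathbf{Y}_n}$ (the projection of a lateral face $F$) onto the arc of $\partial F$ lying in the essential tree. Concretely, I move points vertically from the basis up to the dome. Next I collapse each lateral face $F$ onto $F\cap\alma{\mathbf{Y}_n}$ together with its two leaves. Finally I retract each leaf onto its vertex in the core tree. Triangles need a separate check: for them $F\cap\alma{\mathbf{Y}_n}$ is a single vertex.

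Each of these moves keeps a point inside the faces containing it. The point is that a point lying on two lateral faces lies on a leaf or a core edge, and those are retracted along themselves. I expect this to be the main obstacle: one must verify carefully that the face-wise collapses agree on shared edges, especially at the leaves. The cleanest route I would try first is induction on $n$ via the truncation description of the earlier Theorem together with Proposition~\ref{prop:labels}\ref{prop5}. Truncating a basis vertex adds a triangle whose new core edge is the old leaf, so the retraction for $\mathbf{Y}_{n-1}$ extends by collapsing the new triangular prism-like region onto that edge.

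\emph{Step 2: regular neighbourhood.} Near a point of $\alma{\mathbf{Y}_n}$ the orbifold charts are $\mathbb{R}\times$(quarter-plane) along the interior of an edge and the octant at a vertex. Hence $\orbdomo{n}$ is the orbifold regular neighbourhood of the core tree, with frontier the basis $p_n$. Lifting, $Z_n^*$ is a compact $3$-manifold which is a regular neighbourhood of the graph $\Gamma:={\pi_n^*}^{-1}(\alma{\mathbf{Y}_n})$. Its orientability follows since $Z_n^*\subset Z_n$ and $Z_n$ is orientable by Theorem~\ref{tzpn}. It is connected because the reflections in the dome faces generate the deck group $(\mathbb{Z}/2)^n$ and $\mathbf{Y}_n$ is connected. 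Therefore $Z_n^*$ is a handlebody with core graph $\Gamma$.

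\emph{Step 3: genus.} By Lemma~\ref{lema:contar}\ref{lema:contar-3}, the core tree has $n-2$ vertices, each with isotropy $(\mathbb{Z}/2)^3$ inside $(\mathbb{Z}/2)^n$, and $n-3$ edges, each with isotropy $(\mathbb{Z}/2)^2$. Hence
\[
\chi(\Gamma)=(n-2)2^{n-3}-(n-3)2^{n-2}=2^{n-3}(4-n).
\]
The genus is therefore $1-\chi(\Gamma)=(n-4)2^{n-3}+1=b_n$, in agreement with Theorem~\ref{tzpn}. As a consistency check, doubling $Z_n^*$ along its boundary should give $Z_n$.
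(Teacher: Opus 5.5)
Your proof is correct. Its skeleton matches the paper's: $Z_n^*$ is a regular neighbourhood of the lifted core tree, that graph is connected, and the genus comes from an Euler characteristic. But you justify each step differently.

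\begin{itemize}
\item \textbf{Neighbourhood claim.} The paper only asserts, with Figure~\ref{ZdeP5}, that $Z_n^*$ is a neighbourhood of $G_n$, the preimage of the core tree. You justify it with a face-preserving retraction that lifts to the cover.
\item \textbf{Connectedness.} The paper describes $G_n$ explicitly: the $2^{n-2}$ lifts of each core edge $l_i$ close up into $2^{n-4}$ circles of four arcs, and circles of adjacent edges meet. You get connectedness from the fact that the dome reflections generate the deck group.
\item \textbf{Genus.} The paper reads the genus off the boundary, $\chi(F_{b_n})=n2^{n-2}-n2^{n-1}+2^n$, where $F_{b_n}$ is $p_n$ reflected in its edges. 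You compute $\chi(\Gamma)=2^{n-3}(4-n)$ directly from isotropy orders. The two agree, since $\chi(F_{b_n})=2\chi(\Gamma)$.
\end{itemize}

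Your version is more complete on the topology: the paper never addresses orientability. The paper's version buys an explicit picture of the core graph and the identification $\partial Z_n^*=F_{b_n}$, which is exactly what is used later for the meridian circles coming from the code and for the Heegaard splittings of bipyramitoids.

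Some smaller points.

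\begin{itemize}
\item What you call the main obstacle is not one. Retract each face $F$ onto $\partial F$ minus its open basis edge by a strong deformation retraction. Compatibility along leaves and core edges is then automatic, so no induction on truncations is needed.
\item To make the step from Step~1 to Step~2 airtight, phrase your three moves as PL collapses: the ball through the basis, each face through its basis edge, each leaf through its basis vertex. Each of these free faces has the same isotropy group as the cell it bounds, so every lift is a free face of exactly one lifted cell. This gives $Z_n^*\searrow\Gamma$ with $\Gamma\subset\operatorname{int}Z_n^*$, and the regular neighbourhood theorem applies. A bare deformation retraction onto a graph yields ``handlebody'' only through heavy machinery (irreducibility, the sphere theorem, the Poincar\'e conjecture).
\item Orientability can be obtained intrinsically, by orienting the copy $g\cdot\mathbf{Y}_n$ according to the parity of $g\in(\mathbb{Z}/2)^n$. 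This avoids appealing to Theorem~\ref{tzpn}, which this section is meant to recover constructively.
\end{itemize}
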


\begin{proof}
The manifold $Z_n^*$ is a neighborhood of the graph $G_n:={\pi_n^*}^{-1}(\alma{\mathbf{Y}_{n}})$, see Figure~\ref{ZdeP5}; recall that $\pi_n^*$ is an $2^n$-orbifold cover. Let~$l_i$, $i=1, \dots ,n-3$, be the edges of $\alma{\mathbf{Y}_{n}}$.
Since $l_i$ is in exactly two faces of the dome, ${\pi_n^*}^{-1}$ if formed by $2^{n-2}$ edges. These edges give rise
to $2^{n-4}$ circles, each one  formed by $4$ copies of $l_i$.

If $l_i\cap l_j\neq\emptyset$, each one of these $l_i$-circles intersect two $l_j$-circles in one point as in
Figure~\ref{ZdeP5}. Therefore the graph  $G_n$ is connected and becomes the core of a handlebody $H_{c_n}$ of genus $c_n$. The boundary of $H_{c_n}$ is   the surface $F_{c_n}$ generated by the reflection of the basis polygon $p_n$ of $\mathbf{Y}_n$ on its edges. The genus $c_n$ of this surface can be computed using the  Euler characteristic:
\begin{equation*}
  2-2c_n = \chi (F_{c_n}) = n 2^{n-2}-n 2^{n-1} + 2^n  \quad \Longrightarrow \quad c_n= 2^{n - 3} (n - 4)  + 1= b_n
\qedhere
\end{equation*}
\end{proof}

\begin{coro}
The genus of the surface generated by reflection on the $n$ edges of a right-angle geometric $n$-polygon is  $b_n = 2^{n - 3}(n - 4) + 1,$ that is the term $a_{n-3}$ of  the integer sequence $\{a_n\}$ =  $A000337$ in OEIS \cite{OEIS25}. The same property holds for the genus of the graph $G_n$ associated to a simple $n$-pyramitoid.
\end{coro}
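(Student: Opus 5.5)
We need to prove the corollary: the genus of the surface obtained by reflecting a right-angled $n$-gon in its $n$ edges is $b_n=2^{n-3}(n-4)+1$, and the graph $G_n$ has the same genus (first Betti number).

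The surface in question is $\partial Z_n^*={\pi_n^*}^{-1}(p_n)$: the real moment-angle manifold of the polygon $p_n$, with all $n$ edges mirrors and the group $(\mathbb{Z}/2)^n$ acting. So the first claim is a direct Euler characteristic count. We give $p_n$ the cell structure with $n$ vertices, $n$ edges and one $2$-cell, and lift it to the $2^n$-fold orbifold cover. A generic point of the interior has $2^n$ preimages, so there are $2^n$ $2$-cells. An edge point has stabilizer $\mathbb{Z}/2$, giving $2^{n-1}$ preimages per edge and $n2^{n-1}$ edges. A vertex has stabilizer $(\mathbb{Z}/2)^2$, giving $n2^{n-2}$ vertices. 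Then
\[
\chi = n2^{n-2}-n2^{n-1}+2^n = 2^{n-2}(4-n),
\]
and solving $2-2c=\chi$ gives $c=1+2^{n-3}(n-4)=b_n$. This reproduces the computation in the proof of Theorem~\ref{hn}. Connectedness of the surface, needed for the genus to make sense, follows because the generators of $(\mathbb{Z}/2)^n$ are exactly the reflections in the edges, which act transitively on the sheets. Identifying $b_n$ with $a_{n-3}$ in A000337 (whose terms are $a_m=(m-1)2^m+1$) is just the substitution $m=n-3$.

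For the graph $G_n={\pi_n^*}^{-1}(\alma{\mathbf{Y}_n})$, we use Theorem~\ref{hn}: $G_n$ is a core of the handlebody $Z_n^*$, whose boundary is the surface above. The genus of a handlebody equals the rank of $H_1$ of a core graph, and it also equals the genus of its boundary. Hence the first Betti number of $G_n$ is $b_n$.

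As an independent check, avoiding reliance on the handlebody identification, we would count $\chi(G_n)$ directly.
\begin{itemize}
\item Each of the $n-2$ core vertices lies in three dome faces, so it has stabilizer $(\mathbb{Z}/2)^3$ and $2^{n-3}$ preimages.
\item Each of the $n-3$ core edges lies in two dome faces, so it has $2^{n-2}$ preimages.
\end{itemize}
This gives
\[
\chi(G_n)=(n-2)2^{n-3}-(n-3)2^{n-2}=2^{n-3}(4-n),
\]
so $\operatorname{rank} H_1(G_n)=1-\chi(G_n)=b_n$, using connectedness of $G_n$ as shown in the proof of Theorem~\ref{hn}.

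The only delicate point is the orbifold stabilizer count at each stratum, namely that the cover is the full $(\mathbb{Z}/2)^n$-cover, so that the number of preimages is $2^n/|\text{stabilizer}|$. This follows from the construction in Remark~\ref{rem:Z}.
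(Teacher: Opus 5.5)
Your proposal is correct and is essentially the paper's argument: the genus comes from the same Euler characteristic count $n2^{n-2}-n2^{n-1}+2^n$ used in the proof of Theorem~\ref{hn}, and the claim for $G_n$ follows because $G_n$ is a core of the handlebody $Z_n^*$ whose boundary is that surface. Your direct count $\chi(G_n)=(n-2)2^{n-3}-(n-3)2^{n-2}$ is also valid, since it uses the same stabilizer counts the paper uses for the preimages of the core edges, but it is a redundant extra check rather than a different route.
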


 \begin{coro}\label{2hn}
    The manifold $Z_n$ is the double of the $b_n$-handlebody $Z_n^*$.
  \end{coro}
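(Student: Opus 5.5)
The plan is to realize $Z_n$ as two copies of $Z_n^*$ glued along their common boundary by the identity. The decomposition comes from the reflection in the basis $p_n$.

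First, compare the two orbifold covers. The epimorphism $\pi_1^{\orb}(\mathbf{Y}_n^{\orb})\to(\mathbb{Z}/2)^{n+1}$ sends the $n$ dome generators to the first $n$ basis vectors and the basis generator $x_{n+1}$ to the last one. The structure $\orbdomo{n}$ is obtained from $\mathbf{Y}_n^{\orb}$ by making the basis face a boundary face, and its cover uses the quotient $(\mathbb{Z}/2)^n$ on the dome reflections. Concretely, in the second viewpoint of Remark~\ref{rem:Z}, $Z_n$ is the union of the $2^{n+1}$ images of $\mathbf{Y}_n\subset\mathbb{R}^{n+1}_{\geq 0}$ under reflections in the coordinate hyperplanes. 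I then split this union according to the sign of the last coordinate $r_{n+1}$, which is the one whose vanishing cuts out $p_n$.

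Let $Z_n^+$ be the union of the images under the subgroup $(\mathbb{Z}/2)^n$ of reflections in the dome coordinates. By construction $Z_n^+$ is exactly $Z_n^*$, the cover of $\orbdomo{n}$. Its boundary is the preimage of $p_n$, which is $\partial Z_n^*={\pi_n}^{-1}(p_n)$, as noted before Theorem~\ref{hn}. The complementary half $Z_n^-$ is the image of $Z_n^+$ under the reflection $\sigma$ in the last coordinate hyperplane. Therefore $Z_n=Z_n^+\cup\sigma(Z_n^+)$, and $Z_n^+\cap\sigma(Z_n^+)$ is the preimage of $p_n$, i.e. the fixed set of $\sigma$. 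Hence $\sigma$ restricts to the identity on $\partial Z_n^*$, and $Z_n\cong Z_n^*\cup_{\mathrm{id}}Z_n^*$ is the double. By Theorem~\ref{hn}, $Z_n^*$ is the handlebody $H_{b_n}$.

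The main point to verify is that this decomposition is a genuine manifold doubling, meaning $\partial Z_n^*$ is a two-sided separating surface in $Z_n$ with collar neighbourhoods on both sides. This is a local check near points of $p_n$, and I would use the right-angled charts. Near an interior point of $p_n$, or near a point on an edge or vertex of $p_n$ (where the basis meets one or two dome faces), the chart is $\mathbb{R}^3$ modulo reflections in one, two or three coordinate planes. The basis reflection is always one of these planes. Unfolding first the dome reflections gives the chart of $Z_n^*$, namely $\mathbb{R}^2\times\mathbb{R}_{\geq0}$, and the remaining reflection glues two such half-spaces along $\mathbb{R}^2$ by the identity. This gives the collar on each side.

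As a consistency check, the doubling formula should agree with Theorem~\ref{tzpn}. The double of $H_{b_n}$ is $\connectedsum_{i=1}^{b_n}(\mathbb{S}^2\times\mathbb{S}^1)$, which is exactly the count given there.
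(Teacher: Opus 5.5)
Your proposal is correct and follows essentially the same route as the paper: you obtain $Z_n$ by reflecting $Z_n^*$ across its boundary $\pi_n^{-1}(p_n)$ with the basis reflection, which is the same as gluing two copies of the cover $\pi_n^*$. The splitting by the sign of the last coordinate and the local collar check in the right-angled charts are details the paper leaves implicit, and you have filled them in correctly.
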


\begin{proof}
To obtain  the manifold $Z_n$ we only need to reflect the handlebody $H_{b_n}$ on its boundary  surface $F_{b_n}$, that is, to construct the double of $H_{b_n}$ which is a connected sum of $b_n$ copies of  $\mathbb{S}^2 \times\mathbb{S}^1$. This is the result obtained in Theorem \ref{tzpn} by a different approach.
More precisely, the covering $\pi_n$ can be constructed gluing two copies of the covering $\pi_n^*$.
\end{proof}

Any $n$-pyramitoid $\mathbf{Y}_n$ with basis $p_n$ is determined by its $p_n$-core tree $\almap{\mathbf{Y}_n}$, which determines the $p_n$-essential tree $\esencialp{\mathbf{Y}_n}$ and the cellular
subdivision induced by this last tree. We want to codify these data.

\begin{dfn}
The \emph{code} of an $n$-pyramitoid $\mathbf{Y}_n$ is the set $(p_n, (r_1,\dots,r_{n-3})\})$, where $p_n$ is the basis polygon and $r_i$, $i=1, \dots , n-3$, is  a set of disjoint lines,
where $r_i$  crosses transversally the edge $l_i^{p_n}$ of $\almap{\mathbf{Y}_n}$ joining the two sides of the basis polygon separated by $l_i^{p_n}$.
\end{dfn}

\begin{remark}
    Let $(r_1,\dots,r_{n-3})$ be a family of pairwise disjoint $n-3$ arcs in~$p_n$ such that each arc has its endpoints
    in the interior of two non-consecutive edges. This a necessary condition that a code must satisfy. Is it sufficient?
\end{remark}

We are going to use the code  $(p_n, (r_1,\dots,r_{n-3}))$ to encode  the handlebody $H_{b_n}$.

  \begin{figure}[ht]
\begin{center}
\begin{tikzpicture}
\begin{scope}[scale=1.5]
\foreach \x in {0, ..., 4}{
\coordinate (A\x) at (\x*360/4 + 45:1);
}

\coordinate (A) at (180:1/3);
\coordinate (B) at (0:1/3);

\fill[yellow] (A0) -- (B) -- (A) --(A1) -- cycle;
\fill[yellow] (A3) -- (B) -- (A) --(A2) -- cycle;

\draw (A0) -- (A1) -- (A2) -- (A3) -- (A4);

\draw[line width=1pt, blue] (A0) -- (B);
\draw[line width=1pt, blue] (A1) -- (A);
\draw[line width=1pt, blue] (A2) -- (A);
\draw[line width=1pt, blue] (A3) -- (B);

\draw[line width=1pt, red] (A) -- (B);
\draw[line width=1pt, green] ($.5*(A0) + .5 *(A1)$) -- ($.5*(A2) + .5 *(A3)$);

\draw (-1/2, 0) node {$1$};
\draw (1/2, 0) node {$3$};
\draw (-1/3, .5) node {$2$};
\draw (1/3, -.5) node {$4$};

\end{scope}

\begin{scope}[xshift=3cm]
\foreach \x in {0, ..., 4}{
\coordinate (A\x) at (-\x*360/4 -90:1);
}

\draw[line width=1pt] (A0) -- (A1) -- (A2) -- (A3) -- (A4)  -- (A2) ;
\foreach \x in {1, ..., 4}{
\filldraw[fill=white] (A\x) circle[radius=.2];
\node at (A\x) {$\x$};
}

\draw[red, line width=1pt] (1/2, 0) -- (-1/2,0);

\end{scope}

\begin{scope}[xshift=6cm, scale=1.25]
\foreach \x in {0, ..., 5}{
\coordinate (A\x) at (\x*360/5 + 90:1);
}

\coordinate (A) at (180:1/2);
\coordinate (B) at (90:1/2);
\coordinate (C) at (0:1/2);

\fill[yellow] (A0) -- (B) -- (C) --(A4) -- cycle;
\fill[yellow] (A0) -- (B) -- (A) --(A1) -- cycle;

\fill[green] (A2) -- (A) -- (B) -- (C) --(A3) -- cycle;

\draw (A0) -- (A1) -- (A2) -- (A3) -- (A4) -- (A5);

\draw[line width=1pt, blue] (A0) -- (B);
\draw[line width=1pt, blue] (A1) -- (A);
\draw[line width=1pt, blue] (A2) -- (A);
\draw[line width=1pt, blue] (A3) -- (C);
\draw[line width=1pt, blue] (A4) -- (C);

\draw[line width=1pt, red] (A) -- (B) -- (C);
\draw[line width=1pt, green!70!black] ($.5*(A0) + .5 *(A1)$) to[bend left=15pt] ($2/3*(A2) + 1/3*(A3)$);
\draw[line width=1pt, green!70!black] ($.5*(A0) + .5 *(A4)$) to[bend right=15pt] ($1/3*(A2) + 2/3*(A3)$);

\draw (-2/3, -1/6) node {$1$};
\node[right =3pt] at (A1) {$2$};
\node[above right =0pt] at (B) {$3$};
\draw (2/3, -1/6) node {$4$};
\draw (0, -1/2) node {$5$};
\end{scope}

\begin{scope}[xshift=9cm]
\foreach \x in {0, ..., 5}{
\coordinate (A\x) at (-\x*360/5 -90:1);
}

\draw[line width=1pt] (A3) -- (A0) -- (A1) -- (A2) -- (A3) -- (A4) -- (A5)  -- (A2) ;
\foreach \x in {1, ..., 5}{
\filldraw[fill=white] (A\x) circle[radius=.2];
\node at (A\x) {$\x$};
}

\draw[red, line width=1pt] (1/2, 0) -- (0, 1/2) -- (-1/2,0);

\end{scope}
\end{tikzpicture}
\caption{The code (green lines) and triangulations in $\mathbf{Y}_4$ and  $\mathbf{Y}_5$.  }\label{codytri}
\end{center}
\end{figure}

\begin{lema} There is a one-to-one correspondence between the set of codes of an $n$-pyramitoid and the triangulations of
the dual polygon~$q_n$, see Figure{\rm~\ref{codytri}}.
\end{lema}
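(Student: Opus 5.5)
The plan is to exhibit a direct geometric duality between the code and the triangulation, by placing the polygon $q_n$ in the same plane as $p_n$. Put one vertex of $q_n$ at an interior point of each edge of $p_n$; these are exactly the vertices of $\Gamma_n$ from Example~\ref{ejm:dim2}. Since each arc $r_i$ of a code has its endpoints in the interiors of two edges of $p_n$, we isotope $r_i$, rel the edges of $p_n$, until its endpoints are the corresponding vertices of $q_n$. The arc $r_i$ then becomes a curve joining two vertices of $q_n$.

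Starting from a code $(p_n,(r_1,\dots,r_{n-3}))$, the first step is to check that each $r_i$ yields a diagonal of $q_n$, not an edge. By definition $r_i$ crosses the core edge $l_i^{p_n}$ and joins the two sides of $p_n$ separated by it. These sides are the two lateral faces (2-cells of $\cw{\mathbf{Y}_n}$) that contain $l_i^{p_n}$. This is exactly the recipe defining the extra edges of $G_{\orbdomo{n}}$ before Lemma~\ref{lema:min_tri}, so $r_i$ realizes the diagonal of $q_n$ associated with $l_i$. Two faces sharing a core edge cannot be adjacent along $p_n$, since then they would be consecutive faces sharing two edges. Hence the endpoints lie on non-consecutive sides, as the Remark requires.

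Next, disjointness of the $r_i$ gives $n-3$ pairwise non-crossing diagonals. These are distinct, because distinct core edges separate distinct pairs of faces. By the maximality argument in the proof of Lemma~\ref{lema:min_tri}\ref{lema:min_tr2}, they form a minimal triangulation $\ct{\mathbf{Y}_n;q_n}$. So the map from codes to triangulations is well defined, once codes are taken up to isotopy of the family of disjoint arcs in $p_n$ rel the edges.

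For the converse, take a triangulation of $q_n$ with diagonals $d_1,\dots,d_{n-3}$. By the remark after Lemma~\ref{lema:min_tri}, it determines a simple pyramitoid whose $p_n$-core tree is the dual tree of the triangulation (Lemma~\ref{lema:min_tri}\ref{lema:min_tri3}). Each diagonal $d_i$, drawn inside $p_n$ with endpoints on the midpoints of the two sides it joins, crosses exactly the dual edge $l_i^{p_n}$ transversally. It meets no other edge of $\almap{\mathbf{Y}_n}$, and the diagonals are pairwise disjoint. So $(p_n,(d_1,\dots,d_{n-3}))$ is a code of that pyramitoid. The two constructions are inverse to each other because both are governed by the same bijection between core edges $l_i$ and diagonals.

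The step I expect to be the main obstacle is injectivity. I must show that two codes giving the same set of diagonals are the same code, which means fixing what "the set of codes" is identified up to. I would define codes up to isotopy of $p_n$ preserving each edge. I would then argue that a family of disjoint arcs with prescribed endpoint edges is unique up to such isotopy, because each $r_i$ cuts the disc $p_n$, and the arcs are determined by which pairs of sides they join.
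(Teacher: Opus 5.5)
Your proposal is correct and follows the paper's route. The paper's entire proof is the remark that the dual tree of a triangulation of $q_n$ is $\almap{\mathbf{Y}_n}$ (Lemma~\ref{lema:min_tri}\ref{lema:min_tri3}), so the code arc $r_i$ crossing $l_i$ corresponds to the diagonal dual to $l_i$; you spell out both directions and the isotopy convention that the paper leaves implicit.

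One small repair is needed. Diagonals that share a vertex of $q_n$ meet at that midpoint, so in the converse you must slide the endpoints lying on a common side of $p_n$ apart, in the order forced by non-crossing, to get genuinely disjoint arcs. Likewise, in the forward direction you should simply record the pair of sides joined by each $r_i$, rather than isotoping several endpoints onto one point.
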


\begin{proof}
This is true because the dual tree of a triangulation of~$q_n$  (one vertex for each triangle in the triangulation and one edge for each interior edge) is exactly~$\almap{\mathbf{Y}_n}$.
\end{proof}

  \begin{figure}[ht]
\begin{center}
\begin{tikzpicture}
\begin{scope}[scale=1.5]
\foreach \x in {0, ..., 5}{
\coordinate (A\x) at (\x*72 + 18:1);
}

\draw (A0) -- (A1) -- (A2) -- (A3) -- (A4) -- (A5);
\coordinate (A) at (-1/2,0);
\coordinate (B) at (0, 1/4);
\coordinate (C) at (1/2,0);

\draw[green, line width=1pt] ($.5*(A1) + .5*(A2)$) node[below right=3pt, black] {$r_1$} to[bend left=20pt] ($2/3*(A3) + 1/3*(A4)$);
\fill[white] ($.5*(A) + .5*(B)$) circle[radius=.1cm];

\draw[green, line width=1pt] ($.5*(A1) + .5*(A0)$) node[below left=3pt, black] {$r_2$} to[bend right=20pt] ($1/3*(A3) + 2/3*(A4)$);
\fill[white] ($.5*(A) + .5*(B)$) circle[radius=.1cm];

\draw[line width=1pt, blue] (A0) -- (C);
\draw[line width=1pt, blue] (A1) -- (B);
\draw[line width=1pt, blue] (A2) -- (A);
\draw[line width=1pt, blue] (A3) -- (A);
\draw[line width=1pt, blue] (A4) -- (C);

\draw[line width=1pt, red] (A) node[above] {$l_1$} -- (B) ;
\draw[line width=1pt, green!50!black]  (B) -- (C) node[above] {$l_2$} ;

\draw ($1.25/2*(A2) + 1.25/2*(A3)$) node {$1$};
\draw ($1.25/2*(A4) + 1.25/2*(A0)$) node {$4$};
\draw ($1.25/2*(A2) + 1.25/2*(A1)$) node {$2$};
\draw ($1.25/2*(A0) + 1.25/2*(A1)$) node {$3$};
\draw ($1.25/2*(A3) + 1.25/2*(A4)$) node {$5$};
\end{scope}

\begin{scope}[ scale=1.25, xshift=4cm, yshift=-1cm]

\draw (-2,0) arc[start angle=180, end angle=90, radius=2cm] coordinate[pos=.2] (P1) coordinate[pos=.8] (P2);

\draw (-1.5,0) arc[start angle=180, end angle=0, radius=.25cm] coordinate[pos=.25] (Q1)  coordinate[pos=.75] (Q2);
\draw (-.5,0) arc[start angle=180, end angle=90, radius=.5cm]  coordinate[pos=1/3] (R1)  coordinate[pos=2/3] (R2);
\draw (-2,0) arc[start angle=-180, end angle=0, x radius=.25cm, y radius=.125cm];
\draw[dotted] (-2,0) arc[start angle=180, end angle=0, x radius=.25cm, y radius=.125cm];
\draw (-1,0) arc[start angle=-180, end angle=0, x radius=.25cm, y radius=.125cm];
\draw[dotted] (-1,0) arc[start angle=180, end angle=0, x radius=.25cm, y radius=.125cm];

\draw (0,1.5) arc[start angle=90, end angle=270, radius=.25cm]   coordinate[pos=.25] (S1)  coordinate[pos=.75] (S2);
\draw (-.5,0) arc[start angle=180, end angle=90, radius=.5cm];
\draw (0, 2) arc[start angle=90, end angle=-90, y radius=.25cm, x radius=.125cm];
\draw[dotted] (0, 2) arc[start angle=90, end angle=270, y radius=.25cm, x radius=.125cm];
\draw (0,1) arc[start angle=90, end angle=-90, y radius=.25cm, x radius=.125cm];
\draw[dotted] (0,1) arc[start angle=90, end angle=270, y radius=.25cm, x radius=.125cm];

\coordinate (V) at (135:1.7) ;
\draw[line width=1, green!50!black] (V) -- node[pos=.25, above] {$l_2$} (0,1.75) (V) -- node[pos=.5, above] {$l_2$} (0,.75);
\draw[line width=1, red] (V) -- node[pos=.25, left] {$l_1$} (-1.75, 0) (V) -- node[pos=.5, right] {$l_1$} (-.75, 0);

\draw[green, line width=1] (P1) to[bend left] (Q1);
\draw[green, line width=1, dotted] (P1) to[bend right] (Q1);

\draw[green, line width=1] (Q2) to[bend left] (R1);
\draw[green, line width=1, dotted] (Q2) to[bend right] (R1);

\draw[green, line width=1] (P2) to[bend left] (S1);
\draw[green, line width=1, dotted] (P2) to[bend right] (S1);

\draw[green, line width=1] (R2) to[bend left] (S2);
\draw[green, line width=1, dotted] (R2) to[bend right] (S2);

\end{scope}

\begin{scope}[xshift=8cm]
\begin{scope}
\draw (-2,0) arc[start angle=180, end angle=90, radius=2cm] coordinate[pos=.2] (P1) coordinate[pos=.8] (P2);

\draw (-1.5,0) arc[start angle=180, end angle=0, radius=.25cm] coordinate[pos=.25] (Q1)  coordinate[pos=.75] (Q2);
\draw (-.5,0) arc[start angle=180, end angle=90, radius=.5cm]  coordinate[pos=1/3] (R1)  coordinate[pos=2/3] (R2);

\draw (0,1.5) arc[start angle=90, end angle=270, radius=.25cm]   coordinate[pos=.25] (S1)  coordinate[pos=.75] (S2);
\draw (-.5,0) arc[start angle=180, end angle=90, radius=.5cm];

\coordinate (V) at (135:1.7) ;
\draw[line width=1,  green!50!black] (V) -- node[pos=.25, above] {$\scriptstyle l_2$} (0,1.75) (V) -- (0,.75) ;
\draw[line width=1, red] (V) --  node[pos=.25, left] {$\scriptstyle l_1$} (-1.75, 0) (V) -- (-.75, 0);

\draw[green, line width=1] (P1) to[bend left] (Q1);
\draw[green, line width=1, dotted] (P1) to[bend right] (Q1);

\draw[green, line width=1] (Q2) to[bend left] (R1);
\draw[green, line width=1, dotted] (Q2) to[bend right] (R1);

\draw[green, line width=1] (P2) to[bend left] (S1);
\draw[green, line width=1, dotted] (P2) to[bend right] (S1);

\draw[green, line width=1] (R2) to[bend left] (S2);
\draw[green, line width=1, dotted] (R2) to[bend right] (S2);
\end{scope}

\begin{scope}[xscale=-1]
\draw (-2,0) arc[start angle=180, end angle=90, radius=2cm] coordinate[pos=.2] (P1) coordinate[pos=.8] (P2);

\draw (-1.5,0) arc[start angle=180, end angle=0, radius=.25cm] coordinate[pos=.25] (Q1)  coordinate[pos=.75] (Q2);
\draw (-.5,0) arc[start angle=180, end angle=90, radius=.5cm]  coordinate[pos=1/3] (R1)  coordinate[pos=2/3] (R2);

\draw (0,1.5) arc[start angle=90, end angle=270, radius=.25cm]   coordinate[pos=.25] (S1)  coordinate[pos=.75] (S2);
\draw (-.5,0) arc[start angle=180, end angle=90, radius=.5cm];

\coordinate (V) at (135:1.7) ;
\draw[line width=1, green!50!black] (V) -- (0,1.75) (V) -- (0,.75);
\draw[line width=1, red] (V) -- (-1.75, 0) (V) -- (-.75, 0);

\draw[green, line width=1] (P1) to[bend left] (Q1);
\draw[green, line width=1, dotted] (P1) to[bend right] (Q1);

\draw[green, line width=1] (Q2) to[bend left] (R1);
\draw[green, line width=1, dotted] (Q2) to[bend right] (R1);

\draw[green, line width=1] (P2) to[bend left] (S1);
\draw[green, line width=1, dotted] (P2) to[bend right] (S1);

\draw[green, line width=1] (R2) to[bend left] (S2);
\draw[green, line width=1, dotted] (R2) to[bend right] (S2);
\end{scope}

\begin{scope}[yscale=-1]
\draw (-2,0) arc[start angle=180, end angle=90, radius=2cm] coordinate[pos=.2] (P1) coordinate[pos=.8] (P2);

\draw (-1.5,0) arc[start angle=180, end angle=0, radius=.25cm] coordinate[pos=.25] (Q1)  coordinate[pos=.75] (Q2);
\draw (-.5,0) arc[start angle=180, end angle=90, radius=.5cm]  coordinate[pos=1/3] (R1)  coordinate[pos=2/3] (R2);

\draw (0,1.5) arc[start angle=90, end angle=270, radius=.25cm]   coordinate[pos=.25] (S1)  coordinate[pos=.75] (S2);
\draw (-.5,0) arc[start angle=180, end angle=90, radius=.5cm];

\coordinate (V) at (135:1.7) ;
\draw[line width=1, green!50!black] (V) --  node[pos=.25, below] {$\scriptstyle l_2$}   (0,1.75)  (V) --  (0,.75);
\draw[line width=1, red]  (V) -- (-1.75, 0) (V) --  (-.75, 0);

\draw[green, line width=1] (P1) to[bend left] (Q1);
\draw[green, line width=1, dotted] (P1) to[bend right] (Q1);

\draw[green, line width=1] (Q2) to[bend left] (R1);
\draw[green, line width=1, dotted] (Q2) to[bend right] (R1);

\draw[green, line width=1] (P2) to[bend left] (S1);
\draw[green, line width=1, dotted] (P2) to[bend right] (S1);

\draw[green, line width=1] (R2) to[bend left] (S2);
\draw[green, line width=1, dotted] (R2) to[bend right] (S2);
\end{scope}

\begin{scope}[scale=-1]
\draw (-2,0) arc[start angle=180, end angle=90, radius=2cm] coordinate[pos=.2] (P1) coordinate[pos=.8] (P2);

\draw (-1.5,0) arc[start angle=180, end angle=0, radius=.25cm] coordinate[pos=.25] (Q1)  coordinate[pos=.75] (Q2);
\draw (-.5,0) arc[start angle=180, end angle=90, radius=.5cm]  coordinate[pos=1/3] (R1)  coordinate[pos=2/3] (R2);

\draw (0,1.5) arc[start angle=90, end angle=270, radius=.25cm]   coordinate[pos=.25] (S1)  coordinate[pos=.75] (S2);
\draw (-.5,0) arc[start angle=180, end angle=90, radius=.5cm];

\coordinate (V) at (135:1.7) ;
\draw[line width=1, green!50!black] (V) --  (0,1.75) (V) -- (0,.75) ;
\draw[line width=1, red] (V) --  node[pos=.25, right] {$\scriptstyle l_1$}  (-1.75, 0) (V) -- (-.75, 0);

\draw[green, line width=1] (P1) to[bend left] (Q1);
\draw[green, line width=1, dotted] (P1) to[bend right] (Q1);

\draw[green, line width=1] (Q2) to[bend left] (R1);
\draw[green, line width=1, dotted] (Q2) to[bend right] (R1);

\draw[green, line width=1] (P2) to[bend left] (S1);
\draw[green, line width=1, dotted] (P2) to[bend right] (S1);

\draw[green, line width=1] (R2) to[bend left] (S2);
\draw[green, line width=1, dotted] (R2) to[bend right] (S2);
\end{scope}

\end{scope}
\end{tikzpicture}
\caption{The code ($r_1$ and $r_2$) and the core tree ($l_1$ and $l_2$) in $\mathbf{Y}_5$, and two partial covers  with the  preimages of the code and the core.}
\label{cod5}
\end{center}
\end{figure}

Let us fix a pyramitoid $\mathbf{Y}_n$ with a code $(r_1,\dots, r_{n-3})$. We number clockwise the edges of the basis~$p_n$ (Figure~\ref{cod5}).
This numbering induces another one of the $2$-cells of the cellular decomposition of $p_n$ and also of the
faces of the dome $\domo{n}$. We consider the indices $1\leq j_1<\dots<j_s\leq n$ of the edges containing end points
of the code (i.e., corresponding to non triangular faces).
Let $\rho_i$ be the reflection on the $i$-face. The reflections $\rho_{j_1},\dots,\rho_{j_s}$ generate a group
of order~$2^s$.
For example, the central  drawing in Figure~\ref{cod5} shows the result after the
action on $\mathbf{Y}_5$ of the group generated by $\rho_2,\rho_3,\rho_5$ of $\mathbf{Y}_5$.
The circles,  $(\pi_{n_{\mid H_{b_n}}})^{-1}(r_i)$,  generated by the line  $r_i$ in the surface $F_{b_n}$ bound   meridian disks of the core circles generated by $l_i$ in the core graph $G_n$. Therefore these circles  identify the handlebody $H_{b_n}$ from the surface $F_{b_n}$, see Figure~\ref{cod5}.

\begin{figure}[ht]
\begin{center}
\begin{tikzpicture}[scale=1.75]
\foreach \x in {0,...,7}
{
\coordinate (A\x) at ({180/7*(2*\x+3) - 90}:1);
}
\foreach \x[evaluate= \x as \y using \x + 1] in {0,...,6}
{
\draw (A\x) -- (A\y);
}
\coordinate (B0) at ({90/7*4- 90}:.6);
\coordinate (B1) at ({720/7 + 90/7*4- 90}:.6);
\coordinate (B2) at ({-720/7 + 90/7*4- 90}:.6);
\coordinate (B3) at (.1, .2);
\coordinate (B4) at (-.3, .2);
\draw[red] (B0) -- (B3) -- (B4) -- (B2) (B3) -- (B1);
\draw[blue] (A0) -- (B0) -- (A6) (A1) -- (B1) -- (A2) (A4) -- (B2) -- (A5) (A3) -- (B4);
\draw[green!50!black] ($.7*(A0) + .3*(A1)$) node[right,black] {$r_4$} to[out=-170, in=90] ($.7*(A6) + .3*(A5)$)
($.7*(A1) + .3*(A0)$) to[out=-170, in=-60] ($.7*(A2) + .3*(A3)$) node[above,black] {$r_3$}
($.5*(A5) + .5*(A6)$) to[out=90, in=-30] ($.7*(A3) + .3*(A2)$) node[above,black] {$r_2$}
($.7*(A4) + .3*(A3)$) node[left,black] {$r_1$} to[out=0, in=90] ($.7*(A5) + .3*(A6)$);
\draw ($1/4*(A0)+1/4*(A6)+1/2*(B0)$) node {$6$} circle [radius=.1];
\draw ($1/4*(A4)+1/4*(A5)+1/2*(B2)$) node {$1$} circle [radius=.1];
\draw ($1/4*(A1)+1/4*(A2)+1/2*(B1)$) node {$4$} circle [radius=.1];
\draw ($(A4)+ (.25,.45)$) node {$2$} circle [radius=.1];
\draw ($1/4*(A3)+1/4*(A2)+.7*(B3)$) node {$3$} circle [radius=.1];
\draw ($1/4*(A1)+1/4*(A0)+1/2*(B3)$) node {$5$} circle [radius=.1];
\draw ($(B4)+ (.15,-.45)$) node {$7$} circle [radius=.1];
\end{tikzpicture}
\caption{Example of code of a $\mathbf{Y}_7$ with the associated cell decomposition.}
\label{fcodes1}
\end{center}
\end{figure}

\begin{figure}[ht]
\begin{center}
\begin{tikzpicture}[scale=1.75]
\begin{scope}[yshift=.5cm, xshift=-.25cm]
\foreach \x in {0,...,7}
{
\coordinate (A\x) at ({180/7*(2*\x+3) - 90}:1);
}
\coordinate (B2) at ({-720/7 + 90/7*4- 90}:.6);
\coordinate (C) at ($(B2) + .15*(1, 1)$);
\coordinate (D5) at ($.8*(A5) + .2*(A6)$);
\coordinate (D4) at ($.8*(A4) + .2*(A3)$);
\draw[line width=1pt] (D4) -- (A4) -- (A5) -- (D5);
\draw[blue,line width=1pt]  (A4) -- (B2) -- (A5);
\draw[red,line width=1pt] (B2) -- (C);
\draw[green!50!black,line width=1pt] (D5) to[out=90, in=-45]  (C) node[right, black] {$r_1$}  to[out=135, in=0]  (D4);
\node at ($(A4) + (.1,.05)$) {$\scriptstyle 2$};
\node at ($(A5) + (.1,.1)$) {$\scriptstyle 7$};
\end{scope}

\begin{scope}[xshift=1.5cm, yshift=.25cm]
\foreach \x in {0,...,7}
{
\coordinate (A\x) at ({180/7*(2*\x+3) - 90}:1);
}
\coordinate (B4) at (-.3, .2);
\coordinate (C) at ($(B2) + .15*(1, 1)$);
\coordinate (D3) at ($.8*(A3) + .2*(A2)$);
\coordinate (D4) at ($.6*(A4) + .4*(A3)$);
\coordinate (D5) at ($.6*(A5) + .4*(A6)$);
\coordinate (D6) at ($.6*(A6) + .4*(A5)$);
\coordinate (R2) at ($(B4) +(0:.25)$);
\coordinate (R1) at ($(B4) +(-120:.25)$);
\draw[line width=1pt] (D4) -- (A3) -- (D3) (D5) -- (D6);
\draw[blue,line width=1pt]  (A3) -- (B4);
\draw[red,line width=1pt] (B4) -- (R1) (B4) -- (R2);
\draw[green!50!black,line width=1pt] (D5) to[out=90, in=-45] (R1) node[below, black] {$r_1$} to[out=135, in=0] (D4);
\draw[green!50!black,line width=1pt] (D6) to[out=90, in=-75] (R2) node[right, black] {$r_2$} to[out=105, in=-30] (D3);
\node at ($(A4) + (.25,.5)$) {$\scriptstyle 2$};
\node at ($(A3) + (.3,-.1)$) {$\scriptstyle 3$};
\node at ($(A5) + (.3,.7)$) {$\scriptstyle 7$};
\end{scope}

\begin{scope}[xshift=3cm]
\foreach \x in {0,...,7}
{
\coordinate (A\x) at ({180/7*(2*\x+3) - 90}:1);
}
\coordinate (B3) at (.1, .2);
\coordinate (D3) at ($.8*(A3) + .2*(A2)$);
\coordinate (D2) at ($.6*(A3) + .4*(A2)$);
\coordinate (D1) at ($.6*(A1) + .4*(A0)$);
\coordinate (D0) at ($.4*(A1) + .6*(A0)$);
\coordinate (D5) at ($.6*(A5) + .4*(A6)$);
\coordinate (D6) at ($.6*(A6) + .4*(A5)$);
\coordinate (R2) at ($(B3) +(-.25,0)$);
\coordinate (R3) at ($(B3) +(75:.25)$);
\coordinate (R4) at ($(B3) +(-45:.4)$);
\draw[line width=1pt] (D3) -- (D2)  (D0) -- (D1) (D5) -- (D6);
\draw[red,line width=1pt] (B3) -- (R2) (B3) -- (R3) (B3) -- (R4);
\draw[green!50!black,line width=1pt] (D5) to[out=90, in=-80] (R2) node[left, black] {$r_2$} to[out=100, in=-30] (D3) (D6) to[out=90, in=-135] (R4)  node[below right, black] {$r_4$} to[out=45, in=190] (D0) (D1) to[out=-190, in=-20] (R3) node[above, black] {$r_3$} to[out=160, in=-45] (D2);
\node at ($(B3) + (.1,.05)$) {$\scriptstyle 5$};
\node at ($(B3) + (-.1,.1)$) {$\scriptstyle 3$};
\node at ($(B3) + (0,-.2)$) {$\scriptstyle 7$};
\end{scope}

\begin{scope}[yshift=-2cm, xshift=-.75cm]
\coordinate (A0) at (90:1);
\coordinate (A1) at (-150:1);
\coordinate (A2) at (-30:1);
\coordinate (O) at (0,0);
\coordinate (P2O) at ($.25*(A2) + .75*(O)$);
\coordinate (P20) at ($.75*(A2) + .25*(A0)$);
\coordinate (P21) at ($.75*(A2) + .25*(A1)$);

\draw[line width=1pt] (P20) -- (A0) -- (A1) -- (P21) -- (P2O) -- (P20);
\draw[line width=1pt, blue] (A0) -- (O) -- (A1);
\draw[line width=1pt, red] (O) -- (P2O) node[above, black] {$2$} node[below left, black] {$7$};
\draw[line width=1pt, green!50!black] (P21) -- node[right, black] {$r_1$} (P20);
\end{scope}

\begin{scope}[yshift=-2cm, xshift=1.4cm]
\coordinate (A0) at (90:1);
\coordinate (A1) at (-150:1);
\coordinate (A2) at (-30:1);
\coordinate (O) at (0,0);
\coordinate (P2O) at ($.25*(A2) + .75*(O)$);
\coordinate (P20) at ($.75*(A2) + .25*(A0)$);
\coordinate (P21) at ($.75*(A2) + .25*(A1)$);
\coordinate (P1O) at ($.25*(A1) + .75*(O)$);
\coordinate (P10) at ($.75*(A1) + .25*(A0)$);
\coordinate (P12) at ($.75*(A1) + .25*(A2)$);

\draw[line width=1pt] (P20) -- (A0) -- (P10) -- (P1O) -- (P12) -- (P21) -- (P2O) -- (P20);
\draw[line width=1pt, blue] (A0) -- (O);
\draw[line width=1pt, red] (O) -- (P2O) node[above, black] {$3$}  node[below left, black] {$7$} (O) -- (P1O)node[above, black] {$2$} ;
\draw[line width=1pt, green!50!black] (P21) -- node[right, black] {$r_2$} (P20)
(P12) -- node[left, black] {$r_1$} (P10);
\end{scope}

\begin{scope}[yshift=-2cm, xshift=3.75cm]
\coordinate (A0) at (90:1);
\coordinate (A1) at (-150:1);
\coordinate (A2) at (-30:1);
\coordinate (O) at (0,0);
\coordinate (P2O) at ($.25*(A2) + .75*(O)$);
\coordinate (P20) at ($.75*(A2) + .25*(A0)$);
\coordinate (P21) at ($.75*(A2) + .25*(A1)$);
\coordinate (P1O) at ($.25*(A1) + .75*(O)$);
\coordinate (P10) at ($.75*(A1) + .25*(A0)$);
\coordinate (P12) at ($.75*(A1) + .25*(A2)$);
\coordinate (P0O) at ($.25*(A0) + .75*(O)$);
\coordinate (P01) at ($.75*(A0) + .25*(A1)$);
\coordinate (P02) at ($.75*(A0) + .25*(A2)$);

\draw[line width=1pt] (P20) -- (P02) -- (P0O) -- (P01) -- (P10) -- (P1O) -- (P12) -- (P21) -- (P2O) -- (P20);
\draw[line width=1pt, red] (O) -- (P2O) node[above, black] {$5$}  node[below left, black] {$7$} (O) -- (P1O)node[above, black] {$3$} (O) -- (P0O) ;
\draw[line width=1pt, green!50!black] (P21) -- node[right, black] {$r_4$} (P20)
(P12) -- node[left, black] {$r_2$} (P10) (P02) -- node[above, black] {$r_3$} (P01);
\end{scope}

\begin{scope}[yshift=-4cm, xshift=-.75cm, scale=.8]
\draw[line width=1pt] (30: 1) arc[start angle=30, end angle=150, radius=1];
\draw[line width=1pt] (-30: 1) arc[start angle=-30, end angle=-150, radius=1];
\draw[line width=1pt, red, dashed] ({-sqrt(3)/2}, 0) -- ({sqrt(3)/2}, 0);
\draw[line width=1pt, green!50!black] ({-sqrt(3)/2}, 1/2) arc[start angle=90, end angle=270, x radius=.0625, y radius=.5];
\draw[line width=1pt, dotted, green!50!black] ({-sqrt(3)/2}, 1/2) arc[start angle=90, end angle=-90, x radius=.0625, y radius=.5];
\draw[line width=1pt, blue, dotted] (0, 1) -- (0, -1) (190:.125 and 1) -- (10:.125 and 1);
\draw[line width=1pt, green!50!black] ({sqrt(3)/2}, 0) ellipse [x radius=-.0625cm,y radius=.5cm] node[right,black] {$r_1$};
\draw[line width=1pt] (0,1) arc[start angle=90, end angle=270, x radius=.125, y radius=1];
\draw[line width=1pt,dashed] (0,1) arc[start angle=90, end angle=-90, x radius=.125, y radius=1];
\node at (-90:1.25) {Type $I$};
\end{scope}

\begin{scope}[yshift=-4cm, xshift=1.4cm, scale=.85]
\draw[line width=1pt] (15: 1) arc[start angle=15, end angle=165, radius=1];
\draw[line width=1pt] (-15: 1) arc[start angle=-15, end angle=-165, radius=1];
\draw[line width=1pt, red, dashed] ({-cos(15)}, 0) -- ({cos(15)}, 0)
(-135:.2) -- (45:.2);
\draw[line width=1pt, green!50!black] (165:1) arc[start angle=90, end angle=270, x radius=.05, y radius={sin(15)}] node[left, black, pos=.5] {$r_2$};
\draw[line width=1pt, dashed, green!50!black] (165:1) arc[start angle=90, end angle=-90, x radius=.05, y radius={sin(15)}];
\fill[white] ({cos(15)-.0625},0) circle [radius=.025cm];
\draw[line width=1pt, green!50!black] ({cos(15)}, 0) ellipse [x radius=-.0625,
y radius={sin(15)}] node[right,black] {$r_2$};
\draw[line width=1pt, blue, dotted] (90:1) -- (-90:1);
\draw[line width=1pt, green!50!black] (-135:.125) circle [radius=.25] node [below left=8pt, black] {$r_1$};
\draw[line width=1pt, green!50!black, dotted] (45:.125) circle [radius=.25];
\fill[white] (-.125,0) circle [radius=.025cm];
\draw[line width=1pt] (0,1) arc[start angle=90, end angle=171, x radius=.125, y radius=1]
(0,-1) arc[start angle=-90, end angle=-160, x radius=.125, y radius=1];
\draw[line width=1pt,dotted] (0,1) arc[start angle=90, end angle=18, x radius=.125, y radius=1] (0,-1) arc[start angle=-90, end angle=-9, x radius=.125, y radius=1];
\draw[->, dashed] (70:1.1) node[right] {$r_1$} -- (50:.45);
\node at (-90:1.25) {Type $II$};
\end{scope}

\begin{scope}[yshift=-4cm, xshift=3.75cm, scale=.85]
\draw[line width=1pt] (15: 1) arc[start angle=15, end angle=75, radius=1];
\draw[line width=1pt] (165: 1) arc[start angle=165, end angle=105, radius=1];
\draw[line width=1pt] (-15: 1) arc[start angle=-15, end angle=-75, radius=1];
\draw[line width=1pt] (-165: 1) arc[start angle=-165, end angle=-105, radius=1];
\draw[line width=1pt, red, dashed] ({-cos(15)}, 0) -- ({cos(15)}, 0)
(-135:.2) -- (45:.2);
\draw[line width=1pt, green!50!black] ($(0,{-cos(15)}) + (180:{sin(15)} and .0625)$) arc[start angle=180, end angle=360, x radius={sin(15)}, y radius=.0625] node[right=5pt, black, pos=1] {$r_3$};
\draw[line width=1pt, green!50!black, dotted] ($(0,{-cos(15)}) + (180:{sin(15)} and .0625)$) arc[start angle=180, end angle=0, x radius={sin(15)}, y radius=.0625];
\draw[line width=1pt, green!50!black] (0, {cos(15)}) ellipse [x radius={sin(15)},
y radius=.0625] node[above,black] {$r_3$};

\draw[line width=1pt, green!50!black] (165:1) arc[start angle=90, end angle=270, x radius=.05, y radius={sin(15)}] node[left, black, pos=.5] {$r_2$};
\draw[line width=1pt, dashed, green!50!black] (165:1) arc[start angle=90, end angle=-90, x radius=.05, y radius={sin(15)}];
\fill[white] ({cos(15)-.0625},0) circle [radius=.025cm];
\draw[line width=1pt, green!50!black] ({cos(15)}, 0) ellipse [x radius=-.0625,
y radius={sin(15)}] node[right,black] {$r_2$};
\draw[line width=1pt, red, dotted] (90:{cos(15)}) -- (-90:{cos(15)});
\draw[line width=1pt, green!50!black] (-135:.125) circle [radius=.25] node [below left=8pt, black] {$r_4$};
\draw[line width=1pt, green!50!black, dotted] (45:.125) circle [radius=.25];
\fill[white] (-.125,0) circle [radius=.025cm];
\draw[line width=1pt] (115: .125 and 1) arc[start angle=115, end angle=171, x radius=.125, y radius=1]
(-161: .125 and 1.03) arc[start angle=-161, end angle=-90, x radius=.125, y radius=1.03];
\draw[line width=1pt,dotted] (65: .125 and 1.125) arc[start angle=65, end angle=18, x radius=.125, y radius=1.125]
(-65: .125 and 1) arc[start angle=-65, end angle=-9, x radius=.125, y radius=1];
\draw[->, dashed] (70:1.1) node[right] {$r_4$} -- (50:.45);
\node at (-90:1.25) {Type $III$};
\end{scope}
\end{tikzpicture}
\caption{The three possible cells, the corresponding subpyramitoids and its orbifold covers obtained by the reflections on  some faces of their domes.}
\label{fcode2}
\end{center}
\end{figure}

The code of a pyramitoid determines a cellular decomposition which is interesting. The proof
of the following proposition is elementary and Figures~\ref{fcodes1} and~\ref{fcode2} are a good illustration of the result.

\begin{proposition}\label{prop:desc_code}
    The code $(p_n, (r_1,\dots,r_{n-3}))$ of a pyramitoid $\mathbf{Y}_n$ determines a cellular decomposition of $p_n$ such that
    \begin{enumerate}[label=\rm(\alph{enumi})]
    \item The vertices of the decomposition are the intersections $r_j\cap\partial p_n$, and the vertices of $p_n$.
    \item There are two types of edges: the curves $r_j$ and the subdivision of the edges of $p_n$ determined by the intersections of the curves of the code and $\partial p_n$.
    \item The $2$-cells are the closure of the connected components of $p_n\setminus(r_1\cup\dots\cup r_{n-3})$.
\end{enumerate}
    There are three types of $2$-cells of this decomposition,  types $I$, $II$, or $III$, determined by
the valence ($1, 2$, or $3$)
    of the unique vertex of $\almap{\mathbf{Y}_n}$ contained in the cell.

    This decomposition induces a decomposition of $\mathbf{Y}_n$ in subpyramitoids having the $2$-cells of the decomposition
    as bases. Let $F$ be such a cell and let $\mathbf{Y}_F$ the corresponding subpyramitoid, see Figure{\rm~\ref{fcode2}}.

    \begin{enumerate}[label=\rm(\roman{enumi})]
    \item If $F$ is of type $I$ we have a $4$-pyramitoid with label $(0, 1, 0, 1)$; one of the triangular lateral faces is in $\mathbf{Y}_n$,
    and the basis of the other one is in the code.
    \item If $F$ is of type $II$ we have a $5$-pyramitoid with label $(0, 1, 1, 0, 2)$; the bases of the two triangular lateral faces are in the code.
    \item If $F$ is of type $III$ we have a $6$-pyramitoid with label $(0, 2, 0, 2, 0, 2)$; the bases of the three triangular lateral faces are in the code.
    \end{enumerate}

\end{proposition}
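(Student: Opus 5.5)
The plan is to work entirely in the essential decomposition $\cw{\mathbf{Y}_n;p_n}$ together with the dual triangulation $\ct{\mathbf{Y}_n;q_n}$ of Lemma~\ref{lema:min_tri}.

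\textbf{Step 1: the cells.} Each code arc $r_i$ crosses exactly one edge $l_i^{p_n}$ of the core tree $\almap{\mathbf{Y}_n}$, and the arcs are pairwise disjoint. Hence cutting $p_n$ along $r_1\cup\dots\cup r_{n-3}$ is the same, combinatorially, as cutting the tree $\almap{\mathbf{Y}_n}$ at one interior point of each of its $n-3$ edges. A tree cut at all its edges falls apart into stars, one per vertex. So the connected components of $p_n\setminus\bigcup r_j$ correspond bijectively to the $n-2$ vertices of $\almap{\mathbf{Y}_n}$ (Lemma~\ref{lema:contar}\ref{lema:contar-3}), and each closed component contains exactly one such vertex. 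This proves (a), (b) and (c): the vertices and edges listed are exactly those produced by this cutting, and the $2$-cells are closed discs because every $r_j$ is a properly embedded arc in a disc. In the essential tree every vertex of $\almap{\mathbf{Y}_n}$ has valence $3$, so its valence $k$ inside the core tree lies in $\{1,2,3\}$ (when $n\ge4$). The vertex then carries $3-k$ leaves ending at vertices of $p_n$ and $k$ core half-edges, each cut by a code arc. This gives the three types $I$, $II$, $III$.

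\textbf{Step 2: the subpyramitoids.} For a cell $F$ containing the vertex $w$, take $\mathbf{Y}_F:=\pi_n^{-1}(F)$. Its basis is $F$, and its lateral faces are the pieces of the dome faces over $F$, together with one new vertical face over each code arc bounding $F$. The latter are the triangles whose basis lies in the code. Count the sides of $F$ by walking around $\partial F$. Each code arc is one side. Between consecutive code arcs, or consecutive leaf ends, there are pieces of $\partial p_n$, one for each dome face meeting $F$.

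For type $I$: $F$ has one code arc, and $w$ has two leaves ending at two consecutive vertices of $p_n$. The sides of $F$ are then the full basis edge between those two vertices, the two partial edges on either side, and $r$. This gives a $4$-gon. The faces are one full triangle of $\mathbf{Y}_n$, one triangle over $r$, and two quadrilaterals, each made of a partial edge, a leaf, half a core edge, and a piece of $r$. Reading cyclically gives the label $(0,1,0,1)$.

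Types $II$ and $III$ are checked the same way. Type $II$ gives a pentagon with label $(0,1,1,0,2)$. Type $III$ gives a hexagon with alternating triangles over the code arcs and pentagons between them, so the label is $(0,2,0,2,0,2)$. As a consistency check, each label satisfies Proposition~\ref{prop:labels}\ref{prop2}.

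\textbf{Main obstacle.} The delicate point is making sure $\mathbf{Y}_F$ is honestly a simple pyramitoid. In particular, the new faces over the $r_j$ must meet the old faces only along single edges, and the corners where $r_j$ meets $\partial p_n$ must become trivalent vertices. I would handle this by choosing each $r_j$ inside the union of the two cells of $\cw{\mathbf{Y}_n}$ adjacent to $l_i^{p_n}$, crossing $l_i^{p_n}$ once and no leaf. Then every new vertex has valence $3$ by construction, and each face adjacency can be read off directly from the local picture in Figure~\ref{fcode2}.
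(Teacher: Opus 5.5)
Your proposal is correct and follows essentially the same route as the paper. The paper only asserts that the proof is elementary and points to Figures~\ref{fcodes1} and~\ref{fcode2}, and to Remark~\ref{rem:tetraedros_truncados}, where $\mathbf{Y}_F$ is seen as a tetrahedron with $1$, $2$ or $3$ truncated basis vertices. Your cell-by-cell face count, with each $r_j$ chosen inside the two cells adjacent to $l_j^{p_n}$, is precisely the elementary verification left to the reader.

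One step deserves to be stated explicitly: distinct core vertices lie in distinct regions. This holds because each $r_j$ separates $p_n$, and the tree path between two core vertices crosses $r_j$ exactly once whenever $l_j$ lies on it. Combined with the fact that $n-3$ disjoint proper arcs cut $p_n$ into $n-2$ regions, this gives the claimed bijection.
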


\begin{remark}\label{rem:tetraedros_truncados}
    Observe that every subpyramitoid $\mathbf{Y}_F$ is a $3$-pyramid with a $1$, $2$ or $3$  truncated vertices respectively and the basis of the cutting triangle in each truncation is a code line, see Figures~\ref{fcode2} .
\end{remark}

\begin{dfn}
    For $j=1,2,3$ we define \emph{$j$-size} $m_j$ of a code as de number of cells of types $I,II$, or $III$.
\end{dfn}

\begin{remark}
    The size of a code is $n-2=m_1+m_2+m_3$ and $m_1$ is the number of triangular faces (equivalently the number of $0$ in the label). Note also that $m_j$ coincides
    with the number of vertices of valence~$j$ in the tree~$\alma{\mathbf{Y}_n}$.
    It is easy to check that $m_2=n-2m_1$ and $m_3=m_1-2$.
\end{remark}

An \emph{essential circle} in a handlebody is a non-disconnecting simple closed curve in the boundary which bounds a \emph{meridian disk} in the handlebody.

\begin{proposition}
The code $(p_n, (r_1,\dots,r_{n-3})\})$ of the pyramitoid $\mathbf{Y}_n$ defines a collection of $(n - 3)\times 2^{n-2}$ essential circles in $F_{b_n}$ that are the boundary of meridian disks of the handlebody~$H_{b_n}$, such that $H_{b_n}$ minus these disks is a union of $(n-2)\times 2^{n-3}$ $3$-balls.
\end{proposition}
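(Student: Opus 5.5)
The plan is to construct the meridian disks directly in the orbifold $\orbdomo{n}$ and lift them through the $2^n$-sheeted orbifold cover $\pi_n^*\colon Z_n^*=H_{b_n}\to\orbdomo{n}$ of Theorem~\ref{hn}.

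For each code arc $r_i$, I would build a properly embedded disk $D_i\subset\mathbf{Y}_n$ as follows. Its boundary consists of $r_i\subset p_n$ together with two arcs, one in each of the two lateral faces containing the endpoints of $r_i$. Equivalently, $D_i$ is the "cutting triangle" of Remark~\ref{rem:tetraedros_truncados}, and it meets the core edge $l_i$ transversally in one point. The disks $D_1,\dots,D_{n-3}$ can be chosen pairwise disjoint because the arcs $r_i$ are disjoint. By Proposition~\ref{prop:desc_code}, cutting $\mathbf{Y}_n$ along them yields exactly the $n-2$ subpyramitoids $\mathbf{Y}_F$, one for each $2$-cell $F$ (of type $I$, $II$ or $III$).

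Next I would compute the preimages by counting isotropy. In $\orbdomo{n}$, the disk $D_i$ is an orbifold disk with two mirror sides, namely its arcs in the two dome faces. These meet at a corner reflector point on the edge of $\mathbf{Y}_n$ shared by those faces. Its local group is therefore $(\mathbb{Z}/2)^2$, generated by the two reflections. Each component of ${\pi_n^*}^{-1}(D_i)$ is then a genuine disk made of $4$ copies of $D_i$, and its boundary is a circle made of $4$ copies of $r_i$, lying in $F_{b_n}={\pi_n^*}^{-1}(p_n)$. This gives $2^n/4=2^{n-2}$ disks for each $i$, hence $(n-3)\cdot2^{n-2}$ disks in total. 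Each such disk meets the $l_i$-circles of the core graph $G_n$ from Theorem~\ref{hn} transversally: it is dual to exactly one $l_i$-circle (the four copies of $D_i$ cut the four copies of $l_i$ forming that circle at one point each). Consequently its boundary is non-separating in $F_{b_n}$, which is what makes it an essential circle and the disk a meridian disk.

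For the complement, I would use the fact that $H_{b_n}$ cut along all these disks is ${\pi_n^*}^{-1}$ of the disjoint union of the $\mathbf{Y}_F$. Each $\mathbf{Y}_F$, with the induced orbifold structure, contains exactly one vertex of the core tree. Only three mirror faces of $\orbdomo{n}$ meet $\mathbf{Y}_F$: the three dome faces through that vertex. The faces cut off by the $D_i$ and the basis are not mirrors. Moreover, $\mathbf{Y}_F$ deformation retracts, in the orbifold sense, to a neighbourhood of that vertex, which is $\mathbb{R}^3_{\ge0}$-like with local group $(\mathbb{Z}/2)^3$. Therefore each component of the preimage is $8$ copies of $\mathbf{Y}_F$ glued into a $3$-ball (locally $\mathbb{R}^3/(\mathbb{Z}/2)^3$ unfolded). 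This gives $2^n/8=2^{n-3}$ balls per cell, and $(n-2)\cdot2^{n-3}$ balls in total.

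The main obstacle I expect is checking carefully that each $\mathbf{Y}_F$ really meets exactly those three mirror faces and is "star-shaped" around its core vertex, so that the unfolding is a ball. For type $II$ and $III$ cells, some of the three faces through the core vertex also contain the cutting disks, so I would verify case by case, using the explicit models in Figure~\ref{fcode2}, that no additional reflection fixes a point of $\mathbf{Y}_F$. As a consistency check, I would confirm that the Euler characteristic agrees: balls minus disks gives $(n-2)2^{n-3}-(n-3)2^{n-2}=1-b_n$, which is $\chi(H_{b_n})$.
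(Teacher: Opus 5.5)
Your proposal is correct and follows essentially the same route as the paper. You cut $\mathbf{Y}_n$ along the disks over the code arcs into the subpyramitoids $\mathbf{Y}_F$, note that only the three dome faces through the core vertex of $\mathbf{Y}_F$ are mirrors, and conclude that the $(\mathbb{Z}/2)^n$-cover restricted to $\mathbf{Y}_F$ factors through a $(\mathbb{Z}/2)^3$-cover by a ball (giving $2^{n-3}$ balls per cell and $2^{n-2}$ four-fold disks per arc).

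One small slip in your parenthetical: the four copies of $D_i$ forming a lifted disk all meet a single lift of $l_i$ at one common point, and it is four different lifted disks that cut the four arcs of an $l_i$-circle. Your conclusion still holds, though: each lifted disk meets the core graph in exactly one point, so its boundary is non-separating because $H_2(H_{b_n};\mathbb{Z}/2)=0$. This supplies the essentiality argument that the paper leaves implicit.
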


\begin{proof}
Each $r_i$ gives rise to $2^{n-2}$ circles formed by $4$ copies of $r_i$, hence
the number of circles. Each of these circles bounds a disk in the handlebody.
The portions of the handlebody limited by these disks are obtained from the subpyramitoids of Proposition~\ref{prop:desc_code}.

 Let $F$ be a $2$-cell in $p_n$. Let us denote by $j=1,2,3$ whether $F$ is of type $I,II,III$.
 Note that in Figure~\ref{fcode2}, we can see the subpyramitoid $\mathbf{Y}_F$ determined by $F$ as a $3$-pyramid truncated at $1,2$, or $3$ vertices of the basis.

 We introduce an orbifold structure $\mathbf{Y}_F^{\orb,3}$ on $\mathbf{Y}_F$ such that the mirror faces are the ones coming
 from the lateral faces of the $3$-pyramid.
 Let $\pi_F:\mathbb{B}^3_F\to \mathbf{Y}_F^{\orb,3}$ be the universal abelian orbifold cover. Geometrically, we are taking the union of the images of $\mathbf{Y}_F$ by the group of reflections on the  lateral faces of $\mathbf{Y}_F$ totally or partially contained in lateral faces of the original pyramitoid $Y_n$. As we can see in Figure~\ref{fcode2}, $\mathbb{B}^3_F$ is a $3$-ball, with
$2j$ distinguished disks on  the boundary.
With this process we obtain as many $3$-balls as the core tree has vertices. This number is $(n-2)$, see Lemma~\ref{lema:contar}\ref{lema:contar-3}.

If we consider the reflections along the group generated by all the lateral faces of $\orbdomo{\mathbf{Y}_n}$
the preimage of each subpyramitoid are $2^{n-3}$ copies of the above balls. Let us justify this assertion.
The universal abelian cover of $\orbdomo{\mathbf{Y}_n}$ has Galois group $(\mathbb{Z}/2)^n$. Over each
subpyramitoid $\mathbf{Y}_F$ this cover factors through $\pi_F$ with group $(\mathbb{Z}/2)^3$. Hence the
$\pi^{-1}(\mathbf{Y}_F)$ is the disjoint union of $2^{n-3}$ copies of the $3$-ball $\mathbb{B}^3_F$. And
$H_{b_n}$ decomposes into $(n-2)\times 2^{n-3}$ $3$-balls. The distinguished disks are used to glue these balls
and they become meridian disks of the handlebody.
\end{proof}

This decomposition is not minimal but it is invariant by the action of the reflection group $(\mathbb{Z}/2)^n$.
It is possible to simplify it to get only $1$ ball, but we would lose the group action. The next result gives
a simplification respecting the group action. It is valid only for $n>4$ because
for $n=4$ there are only $2$ cells and they are of type~$I$, see Figure~\ref{fig:toro-heegaard}.
Each cell of this type produces two balls of type~$I$ as in Figure~\ref{fcode2}.

\begin{figure}[ht]
    \centering
    \begin{tikzpicture}
\begin{scope}[xshift=-6.5cm, yshift=-.5cm, scale=.75]
\draw[line width=1pt] (-2, -1) rectangle (2, 1);
\draw[line width=1pt, blue] (-2, 1) -- (-1, 0) -- (-2, -1)
(2, 1) -- (1, 0) -- (2, -1);
\draw[line width=1pt, red]  (-1, 0) -- (1, 0);
\draw[line width=1pt, green!50!black]  (0,-1) -- (0, 1) node[above, black] {$r_1$};
\node at (-1.5,0) {$1$};
\node at (1.5,0) {$4$};
\node at (-.5,.5) {$2$};
\node at (.5,.5) {$2$};
\node at (-.5,-.5) {$3$};
\node at (.5,-.5) {$3$};
\end{scope}
\newcommand\cuarto[1]{
\begin{scope}[#1]
\draw[line width=1pt] (180:1) arc [start angle=180,end angle=90,radius=1];
\draw[line width=1pt] (180:2) arc [start angle=180,end angle=90,radius=2];
\draw[dotted, green!50!black, line width=1pt] (180:1) arc [start angle=0,end angle=180,x radius=.5, y radius=.25];
\draw[green!50!black, line width=1pt] (180:1) arc [start angle=360,end angle=180,x radius=.5, y radius=.25];
\draw[dotted, green!50!black, line width=1pt] (90:1) arc [start angle=270,end angle=90,x radius=.25, y radius=.5];
\draw[green!50!black, line width=1pt] (90:1) arc [start angle=-90,end angle=90,x radius=.25, y radius=.5];
\draw[rotate=-45, line width=1pt] (180:1) arc [start angle=0,end angle=180,x radius=.5, y radius=.25];
\draw[dotted, rotate=-45, line width=1pt] (180:1) arc [start angle=360,end angle=180,x radius=.5, y radius=.25];
\draw[dotted, line width=1pt, blue] (135:1) -- (135:2);
\draw[dotted, line width=1pt, shift={(135:1.5)}, rotate=-45] (-120:.5 and .25) -- (60:.5 and .25);
\draw[dotted, line width=1pt, red] (180:1.5) arc [start angle=180,end angle=90, radius=1.5] ;
\end{scope}
}
\cuarto{}
\cuarto{xscale=-1, xshift=-1cm}
\cuarto{yscale=-1, yshift=1cm}
\cuarto{scale=-1, yshift=1cm, xshift=-1cm}

\node at (.5,-.5) {$\mathbb{B}^3_i=\sigma_3(\mathbb{B}^3_i)=\sigma_2(\mathbb{B}^3_i)$};
\node at (-2.5,2) {$\mathbb{B}^3_1=\sigma_1(\mathbb{B}^3_1)$};
\node at (3,2) {$\mathbb{B}^3_2=\sigma_4(\mathbb{B}^3_2)$};
\node at (-2.5,-3) {$\mathbb{B}^3_3=\sigma_1(\mathbb{B}^3_2)$};
\node at (3.5,-3) {$\mathbb{B}^3_4=\sigma_4(\mathbb{B}^3_1)$};
\node at (0,2.5) {${\mathbb{D}^2_1}'$};
\node at (1,2.5) {${\mathbb{D}^2_1}''$};
\node at (-3.5,0) {$\sigma_1({\mathbb{D}^2_1}')={\mathbb{D}^2_2}'$};
\node at (-3.5,-1) {$\sigma_1({\mathbb{D}^2_1}'')={\mathbb{D}^2_2}''$};
\node at (-1,-3.5) {$\sigma_1\sigma_4({\mathbb{D}^2_1}'')={\mathbb{D}^2_3}''$};
\node at (2.25, -3.5) {${\mathbb{D}^2_3}'=\sigma_4\sigma_1({\mathbb{D}^2_1}')$};
\node at (4.375,0) {${\mathbb{D}^2_4}''=\sigma_4({\mathbb{D}^2_1}''')$};
\node at (4.375,-1) {${\mathbb{D}^2_4}'=\sigma_4({\mathbb{D}^2_1}')$};

\end{tikzpicture}

    \caption{Decomposition in balls for $n=4$.}
    \label{fig:toro-heegaard}
\end{figure}

\begin{coro}
    Let $n>4$. The handlebody $H_{b_n}$ can be seen as the union of the $(n - 2 - m_1)2^{n-3}$ $3$-balls, where $m_1$ is the $1$-size of the code.
    The number of meridian disks is $2^{n-3} (2 n - 6 - m_1)$.
    These balls come from the cells of types $II$ and $III$ and this decomposition is equivariant with respect to the
    action of the reflection group.
\end{coro}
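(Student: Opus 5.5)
The plan is to start from the equivariant decomposition of the preceding proposition. There, $H_{b_n}$ is cut by $(n-3)2^{n-2}$ meridian disks into $(n-2)2^{n-3}$ balls: $2^{n-3}$ copies of $\mathbb{B}^3_F$ for each cell $F$ of the code decomposition of Proposition~\ref{prop:desc_code}. The idea is to absorb every ball coming from a type $I$ cell into a neighbouring ball of type $II$ or $III$. Each absorption removes the meridian disk along which the two balls are glued. The result should again be a ball, and the operation can be performed simultaneously on all translates under $(\mathbb{Z}/2)^n$.

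The first step is to check that every type $I$ cell has a neighbour of type $II$ or $III$ when $n>4$. A type $I$ cell $F$ corresponds to a leaf vertex $v$ of the core tree $\alma{\mathbf{Y}_n}$. The unique code line bounding $F$ is the $r_i$ dual to the core edge $l_i$ at $v$. Across $r_i$ lies the cell of the other endpoint $w$ of $l_i$. If $w$ were also of type $I$, the core tree would be the single edge $l_i$, so $n-3=1$ and $n=4$, which is excluded. Hence $w$ has valence $2$ or $3$. Moreover, distinct leaves give distinct core edges $l_i$, so each type $I$ cell is attached to its neighbour through a code line $r_i$ used by no other type $I$ cell. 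In particular, type $I$ cells are never adjacent to one another.

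The second step is the ball bookkeeping upstairs. Over the subpyramitoid $\mathbf{Y}_F$ of type $I$, the ball $\mathbb{B}^3_F$ carries $2j=2$ distinguished disks, both lying over the same line $r_i$ (Figure~\ref{fcode2}). I would check in the local model that these two disks are glued to two different balls over $\mathbf{Y}_{F_w}$, or else to the same ball along two disjoint disks. The second possibility would create a non-ball, and ruling it out is the main obstacle. My first approach is to compute stabilizers. The reflections in the two lateral faces of $\mathbf{Y}_F$ that are not contained in the dome of $\mathbf{Y}_{F_w}$ act nontrivially, and they move the two disks of $\mathbb{B}^3_F$ to disks lying on distinct copies of $\mathbb{B}^3_{F_w}$. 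Once this is settled, the right way to absorb becomes clear: $\mathbb{B}^3_F$ together with the balls on either side is a boundary connected sum along disks, so it is again a ball. Alternatively, one can glue each type $I$ ball to just one of its neighbours and keep the other disk as a meridian.

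The third step is the count. Removing the $m_1 2^{n-3}$ type $I$ balls leaves $(n-2-m_1)2^{n-3}$ balls. Each absorption removes exactly one disk, giving
\[
(n-3)2^{n-2}-m_1 2^{n-3}=2^{n-3}(2n-6-m_1)
\]
meridian disks, as claimed. As a consistency check, I would verify that the Euler characteristic of the resulting graph of balls and disks has rank $b_n$. The choices made at each leaf $v$ are canonical, since they depend only on $r_i$. The whole construction is therefore a union of $(\mathbb{Z}/2)^n$-orbits, which gives equivariance.
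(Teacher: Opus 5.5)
Your first step and your final arithmetic are fine. The gap is in the construction that is supposed to realize the count, and it sits exactly where you claim equivariance.

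Let $t,a,b$ be the faces at the leaf $v$, with $t$ the triangle, and let $a,b,c$ be the faces at its neighbour $w$. The local group of a type $I$ ball is $\langle\rho_t,\rho_a,\rho_b\rangle$. So $\rho_t$ preserves $\mathbb{B}^3_F$ and interchanges its two distinguished disks, which (as your stabilizer argument shows) lead to distinct neighbours $B$ and $\rho_tB$. Each of your two absorption schemes then fails:
\begin{enumerate}
\item The one-sided absorption is the only version matching ``each absorption removes exactly one disk''. It must pick one of two disks swapped by $\rho_t$, so it is not invariant under the reflection group. The fact that both disks lie over $r_i$ does not decide between them.
\item The two-sided absorption fails differently. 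The reflection $\rho_c$ lies in the local group $\langle\rho_a,\rho_b,\rho_c\rangle$ of $B$ but not in that of $\mathbb{B}^3_F$. Hence $\rho_c\mathbb{B}^3_F\neq\mathbb{B}^3_F$ also joins $B$ to $\rho_tB$. Performing your boundary connected sums on all translates closes the cycle $B,\mathbb{B}^3_F,\rho_tB,\rho_c\mathbb{B}^3_F$ into a solid torus, already for $n=5$. It would also use up two disks per type $I$ ball, contradicting your count.
\end{enumerate}

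The missing idea, which is the paper's argument, is that the two disks are parallel. The subpyramitoid $\mathbf{Y}_F$ is a triangular prism $\Delta\times[0,1]$ whose ends are the triangular face $t$ and the cutting triangle over $r_i$. Reflecting in $a$ and $b$ turns $\Delta$ into a disk $D$ made of four copies of $\Delta$. Reflecting in $t$ then gives $\mathbb{B}^3_F\cong D\times[-1,1]$. The distinguished disks are $D\times\{\pm1\}$. The middle slice $D\times\{0\}$ is the lift of $t$; it is bounded by four copies of the basis edge of the triangle and is fixed by $\rho_t$.

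Replace the pair $D\times\{\pm1\}$ by $D\times\{0\}$: cut $\mathbb{B}^3_F$ along it and attach each half, which is a collar, to the neighbouring ball on its side. Every type $II$ or $III$ ball stays a ball, and each type $I$ ball costs exactly one ball and one disk. Since the new disks form the full preimage of the triangular faces, the decomposition is invariant under $(\mathbb{Z}/2)^n$.
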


\begin{proof}
    Some of the circles defined by the code lines  bound  isotopic meridian disks in the handlebody. In the type $I$ case, when we reflect on the triangular face of the pyramitoid, the two disks associated with copies of the code line $r_1$ are isotopic to a disk bounded
    by copies of the edge of the basis~$p_n$ in the triangle. Therefore  it is sufficient to consider the types $II$ and $III$.
    We have identified $2^{n-3} m_1$ pairs of isotopic meridian disks corresponding to type $I$ cells.
\end{proof}

A handleboby $H_g$ contains a collection $\{\mathbb{D}^2_1, \dots ,\mathbb{D}^2_g\}$ of pairwise disjoint properly embedded disks such that the result of cutting $H_g$ along $\mathbb{D}^2_1\cup\dots \cup \mathbb{D}^2_g$ is a $3$-ball $\mathbb{B}^3$ (\cite{Hempel1976}). In this way the handlebody $H_g$ is recovered
from a $3$-ball $\mathbb{B}^3$ with $g$ pairs
$({\mathbb{D}^2_i}',{\mathbb{D}^2_i}'')$
of disks on its boundary, by gluing up ${\mathbb{D}^2_i}'$ and ${\mathbb{D}^2_i}''$ by means
of an orientation-reversing homeomorphism; the disk $\mathbb{D}^2_i \subset H_g$ is the common image of
${\mathbb{D}^2_i}'$ and ${\mathbb{D}^2_i}''$.

In our case we have ended with a family of $(n - 2 - m_1)2^{n-3}$ $3$-balls $\mathbb{B}^3_{i}$, where $m_1$ is the $1$-size of the code. These balls are of types $II$ and $III$. We have
$2^{n-3} (2 n - 6 - m_1)$ pairs
$({\mathbb{D}^2_i}',{\mathbb{D}^2_i}'')$ of disks distributed in the boundary of these balls.
As in the classical case, we have orientation-reversing homeomorphisms
$h_i:{\mathbb{D}^2_i}'\to{\mathbb{D}^2_i}''$ which allow to recover $H_{b_n}$
as the disjoint union of the balls $\mathbb{B}^3_{i}$ with the \emph{gluings} $h_i$,
denoting by $\mathbb{D}^2_i\subset H_{b_n}$ the identified disks.
It is possible to simplify this construction with the prize of losing
the group action.

Let $\mathcal{D}$ be the family of $2^{n-3} (2 n - 6 - m_1)$ disks $\mathbb{D}^2_i$.
The collection $\mathcal{D}$  contains at least $b_n$ classes of isotopy.
In fact the code $(p_n, \{r_1,\dots,r_{n-3}\})$ of an $n$-pyramitoid contains all the information of the orbifold structure in $\orbdomo{n}$.

Let us consider the orbifold fundamental group
\[
\pi_1^\orb(p_n^{\orb};x_0)= \left\langle\gamma_1,\dots,\gamma_n\middle| \gamma_j^2=1, \gamma_j\cdot\gamma_{j+1}=\gamma_{j+1}\cdot\gamma_{j}, j=1,\dots,n\bmod{n}\right\rangle.
\]
where the basepoint $x_0$ is the center of  the polygon $p_n$ and
the generators $\gamma _j$ are supported by the segment from $x_0$ to the mid point $m_j$ of the $j$-edge of $p_n$, $\{ j=1,\dots ,n\}$.

There is an orbifold inclusion $i:p_n^{\orb}\hookrightarrow \orbdomo{n}$. The induced map on orbifold fundamental groups is surjective and the kernel is normally generated
by the following elements: Let $r_i$ be a path in the code going from $m_{j_i }$ to $m_{k_i}$.
The boundary of the disk associated to $r_i$ is
\[
(\gamma _{j_i }\gamma _{k_i })^2=[\gamma _{j_i },\gamma _{k_i }]
\]
which is the relation associated to the edge $l_i$ in $\alma{\mathbf{Y}_n}$ corresponding to the
intersection of two faces in $l_i$.
From the maps
\[
\begin{tikzcd}
\pi_1^{\orb}(p_n^{\orb}; x_0)\ar[rr, "i_*", two heads]\ar[rd, "\rho\circ i_*", two heads]&&
\pi_1^{\orb}(\orbdomo{\mathbf{Y}_n}; x_0)
\ar[ld, "\rho", two heads]\\
&(\mathbb{Z}/2)^n&
\end{tikzcd}
\]
we obtain two orbifold covers associated
to $\rho$ and $\rho\circ i_*$:
\[
\begin{tikzcd}
F_{b_n}\rar\dar[hookrightarrow]&
p_n^{\orb}
\dar[hookrightarrow]\\
H_{b_n}\rar[]&\orbdomo{\mathbf{Y}_n}.
\end{tikzcd}
\]
The
fundamental groups of $H_{b_n}$ and $F_{b_n}$
are well-known but actual presentations of them are needed when dealing with Heegaard splittings; the previous
maps allow us to fix concrete generators (and relations) of these groups.

 \section{Bipyramitoids}\label{sec:bipyramitoid}

\begin{dfn}
An \emph{$n$-bipyramitoid} $\bipy{Y}{n}$  is a polyhedron with $n$ faces such that there exists a plane $\Pi$ cutting each face in two polygons without ever crossing a vertex. The plane $\Pi$ divides $\bipy{Y}{n}$ into two  $n$-pyramitoids.
\end{dfn}

\begin{remark}
The \emph{$n$-bipyramitoid} $\bipy{Y}{n}$ can also be defined  by its  Hamiltonian property: the dual graph of the $1$-skeleton has a Hamiltonian cycle (the intersection of the plane $\Pi$ with the faces and edges in $\bipy{Y}{n}$), see the works of Whitney~\cite{W:31} and Helden~\cite{Helden}.
\end{remark}

It follows from the preceding definition  that the real moment-angle manifold cover of a simple bipyramitoid can be  studied using a Heegaard splitting.

\begin{thm}\label{tzdebipiramoide}
  Let $\bipy{Y}{n}$ be a simple $n$-bipyramitoid and let $Z(\bipy{Y}{n})$ be the manifold obtained by reflection on all its faces. Then $Z(\bipy{Y}{n})$ is the result of pasting together $2$~handlebodies $H_{b_{n}}$ by a homeomorphism defined by the code associated to  the two pyramitoids.
\end{thm}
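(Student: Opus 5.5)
The plan is to cut $\bipy{Y}{n}$ along the plane $\Pi$ and lift this decomposition to the real moment-angle manifold. By definition $\Pi$ splits $\bipy{Y}{n}$ into two $n$-pyramitoids $\mathbf{Y}_n^{+}$ and $\mathbf{Y}_n^{-}$ with common basis $p_n=\Pi\cap\bipy{Y}{n}$. Each face of $\bipy{Y}{n}$ is cut by $\Pi$ into two pieces, so the lateral faces of $\mathbf{Y}_n^{\pm}$ are in bijection with the $n$ faces of $\bipy{Y}{n}$, and both are labelled by the edges of $p_n$. Since $\Pi$ avoids the vertices and $\bipy{Y}{n}$ is simple, every vertex of $\mathbf{Y}_n^{\pm}$ off $p_n$ is a vertex of $\bipy{Y}{n}$. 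Each vertex on $p_n$ is the transverse intersection of $\Pi$ with an edge, so it is trivalent as well. Hence both pyramitoids are simple. Their domes, with the orbifold structure $\orbdomo{n}$ in which only the dome faces are mirrors, are exactly the two halves of $\bipy{Y}{n}^{\orb}$: along $p_n$ the two halves are glued by the identity, and a point of $p_n$ that is not a mirror point of $\orbdomo{n}$ becomes a smooth interior point of $\bipy{Y}{n}^{\orb}$.

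Next I compare the coverings. The epimorphism $\pi_1^{\orb}(\bipy{Y}{n}^{\orb})\to(\mathbb{Z}/2)^n$ sends each face reflection to a basis vector. By a Seifert--van Kampen argument for orbifolds, $\pi_1^{\orb}(\bipy{Y}{n}^{\orb})$ is the amalgamated product
\[
\pi_1^{\orb}(\orbdomo{}^{+})\ast_{\pi_1^{\orb}(p_n^{\orb})}\pi_1^{\orb}(\orbdomo{}^{-}).
\]
Here both maps from $\pi_1^{\orb}(p_n^{\orb})$ are the surjections $i_*$ described at the end of \S\ref{sec:orbifolds}, and each $\gamma_j$ goes to the reflection on face $j$. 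Restricting the covering $Z(\bipy{Y}{n})\to\bipy{Y}{n}^{\orb}$ to each half gives the covering associated with $\rho\circ$(inclusion). This is the covering $\pi_n^*$, so the preimage of each half is $Z_n^*$, which by Theorem~\ref{hn} is a handlebody $H_{b_n}$. Connectedness holds because $\rho$ is already surjective on each half. Over $p_n$ the preimage is the common boundary surface $F_{b_n}$, which is the covering of $p_n^{\orb}$ associated with $\rho\circ i_*$ in the commutative diagram. Therefore $Z(\bipy{Y}{n})=H_{b_n}^{+}\cup_{F_{b_n}}H_{b_n}^{-}$, glued by the identity of $F_{b_n}$, and this is a Heegaard splitting of genus $b_n$.

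Finally I describe the gluing through the codes. Each handlebody $H^{\pm}_{b_n}$ is determined by $F_{b_n}$ together with its meridian system. By the proposition on essential circles, that system consists of the preimages in $F_{b_n}$ of the code lines $r_i^{\pm}$ of $\mathbf{Y}_n^{\pm}$, drawn in the same polygon $p_n$. So the Heegaard diagram is $(F_{b_n};\pi^{-1}(r^{+}_\bullet),\pi^{-1}(r^{-}_\bullet))$, and the gluing homeomorphism is the one carrying the meridians of one copy of $H_{b_n}$, identified with a standard model, to the lifts of the other code.

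The main obstacle is the local check that the two orbifold structures $\orbdomo{n}$ agree along $p_n$ and glue to the all-mirror structure of $\bipy{Y}{n}$. This matters most at the vertices of $p_n$, where a corner with one mirror in each half must become an edge of $\bipy{Y}{n}$ with dihedral angle $\pi/2$. I would handle it with explicit charts $\mathbb{R}^2\times\mathbb{R}_{\ge0}$, one on each side, which together form the chart $\mathbb{R}^3$ with the group acting by a single reflection.
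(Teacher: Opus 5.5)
Your argument follows the paper's proof, with the details the paper leaves implicit filled in. Both cut $b\mathbf{Y}_n$ along $\Pi$ and identify the preimage of each half with $Z_n^*$, which is a handlebody $H_{b_n}$ by Theorem~\ref{hn}. Both glue the two handlebodies along $F_{b_n}=\pi^{-1}(p_n)$ and read the two meridian systems off the codes $(p_n,\{r_i\})$ and $(p_n,\{r'_i\})$. Your additional checks are correct: both halves are simple pyramitoids, and each restricted cover is connected because the composite map to $(\mathbb{Z}/2)^n$ is already onto.

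One correction concerns the local check you single out as the main obstacle. Let $w=e\cap\Pi$ be a vertex of $p_n$, where $e=F_i\cap F_{i+1}$. In each half, the pieces of $F_i$ and $F_{i+1}$ through $w$ are both mirror faces. So the chart of $l\mathbf{Y}^{\orb}_n$ at $w$ is $\mathbb{R}^2\times\mathbb{R}_{\ge 0}$ with $(\mathbb{Z}/2)^2$ acting by the reflections in $x_1=0$ and $x_2=0$, as in Example~\ref{ejm:dim3}. Gluing the two half-charts along $x_3=0$ gives $\mathbb{R}^3$ with this same $(\mathbb{Z}/2)^2$ action, not a single reflection. That is a corner reflector of order~$2$, exactly the structure of an interior point of the edge $e$ with dihedral angle $\pi/2$, which is the target you correctly stated. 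The single-reflection chart is the one for points in the interior of the edges of $p_n$; these become mirror points in the interior of a face of $b\mathbf{Y}_n$. With that fixed, the identification of the two halves with the all-mirror structure on $b\mathbf{Y}_n$ goes through as you describe.
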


\begin{proof}
  The equatorial plane $\Pi$ cuts  $\bipy{Y}{n}$ into two parts: north  $N(\bipy{Y}{n})$ and south $S(\bipy{Y}{n})$. Each of these two parts is by definition a $n$-pyramitoid. That is, each of these two parts, north and south, have the same  $n$-polygon $p_{n}$ as basis face.
  Reflecting $N(\bipy{Y}{n})$ (resp. $S(\bipy{Y}{n})$) on all their faces but the basis $p_{n}$, one obtain a handlebody $H_{b_{n}}$ (resp. $H'_{b_{n}}$). Then
 \begin{equation}\label{zbp}
   Z(\bipy{Y}{n})= H_{b_{n}} \bigcup _{\partial H_{b_{n}} \equiv \partial H'_{b_{n}}} H'_{b_{n}}
 \end{equation}
  Observe that the boundary of $\partial H_{b_{n}}$ (and $\partial H'_{b_{n}}$) is the surface $F_{b_{n}}$ generated by the reflection of $p_{n}$ on its edges. Then the above expression is a Heegaard splitting of $Z(\bipy{Y}{n})$ (\cite{Heegaard1898,Hempel1976}).  The surface $F_{b_{n}}=\partial H_{b_{n}}=\partial H'_{b_{n}} $ plus the codes $(p_{n}, \{r_i\})$ and $(p_{n}, \{r'_i\})$  define a Heegaard splitting of the manifold $Z(\bipy{Y}{n})$.
\end{proof}

\begin{remark}
    A Heegaard diagram of $Z(\bipy{Y}{n})$ can be obtained from the set of meridians of $H_{b_{n}}$: $\{ \pi_{n}^{-1} (r_i)\subset \partial H_{b_{n}} = F_{b_{n}}, i=1, \dots , 2n-3 \}$ and  the  set of meridians of $H'_{b_{n}}$:  $\{ \pi_{n}^{-1} (r'_i)\subset \partial H'_{b_{n}} =F_{b_{n}}, i=1, \dots , 2n-3 \}$. Note that the two sets of meridians are not minimal but nevertheless they determine $Z(\bipy{Y}{n})$.
\end{remark}

\begin{thm}
The fundamental group $\pi_1(Z(b\mathbf{Y}_n))$ is the free product of the two free groups, $G_{b_n}=\pi_1( H_{b_n})$ and $G'_{b_n}=\pi_1( H'_{b_{n}})$, with amalgamation given by monomorphisms
\begin{equation*}
 \varphi: \pi _1(F_{b_n})\longrightarrow G_{b_n}, \quad \psi: \pi _1(F_{b_n})\longrightarrow G'_{b_n}.
\end{equation*}
Alternatively, $\pi_1(Z(b\mathbf{Y}_n))$ is the quotient of
$\pi _1(F_{b_n}):=\ker\rho$, where
\begin{equation*}
    \begin{tikzcd}[row sep=0pt,/tikz/column 1/.append style={anchor=base east},/tikz/column 2/.append style={anchor=base west}]
    \left\langle{\gamma_i}, i\in\mathbb{Z}/n\middle|
    {[\gamma_i,\gamma_{i+1}]=1}, i\in\mathbb{Z}/n\right\rangle\rar["\rho"]&\displaystyle\bigoplus_{i\in\mathbb{Z}/2}(\mathbb{Z}/n)e_i\\
    \gamma_i\rar[mapsto]&e_i.
    \end{tikzcd}
\end{equation*}
by the relations $[\gamma_{i_j},\gamma_{i_k}]=1$, whenever $(i_j,i_k)$
are the indices of the extremities of $r_1,\dots,r_{n-3}$
and $r'_1,\dots,r'_{n-3}$.
\end{thm}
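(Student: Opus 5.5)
The plan is to apply the Seifert--van Kampen theorem to the Heegaard splitting \eqref{zbp} of Theorem~\ref{tzdebipiramoide}, and then to recompute everything orbifold-theoretically at the level of $\bipy{Y}{n}$.

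\emph{Step 1: the amalgamated product.} The splitting writes $Z(\bipy{Y}{n})=H_{b_n}\cup_{F_{b_n}}H'_{b_n}$. Here $H_{b_n}$ and $H'_{b_n}$ are handlebodies by Theorem~\ref{hn}, so $G_{b_n}$ and $G'_{b_n}$ are free of rank $b_n$, and the common boundary is the closed surface $F_{b_n}$. Taking thickened collars of $F_{b_n}$ as the open sets, Seifert--van Kampen gives
\[
\pi_1(Z(\bipy{Y}{n}))\cong G_{b_n}*_{\pi_1(F_{b_n})}G'_{b_n},
\]
with $\varphi,\psi$ induced by the inclusions $F_{b_n}\hookrightarrow H_{b_n}$ and $F_{b_n}\hookrightarrow H'_{b_n}$.

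A caveat about the word ``monomorphism'': the inclusion of the boundary of a handlebody of positive genus into the handlebody is surjective on $\pi_1$ but not injective, because the meridians die. So I would read the statement as an amalgamated pushout, not a free product with injective amalgamation maps. Concretely, $\pi_1$ of the union is $\pi_1(F_{b_n})$ modulo the normal closure of the meridian curves of both handlebodies.

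\emph{Step 2: the alternative description via orbifolds.} I would use the orbifold covers already set up at the end of \S\ref{sec:orbifolds}. First, $\pi_1(F_{b_n})=\ker(\rho)$, where $\rho$ sends $\pi_1^\orb(p_n^\orb)$ onto $(\mathbb{Z}/2)^n$. This holds because $F_{b_n}\to p_n^\orb$ is the Galois cover associated to $\rho$ and $F_{b_n}$ is a manifold, since $\rho$ is injective on the local groups of the corner reflectors.

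Next, $\pi_1^\orb(\bipy{Y}{n}^\orb)$ is obtained from $\pi_1^\orb(p_n^\orb)$ by an orbifold van Kampen argument along the equatorial polygon. Gluing $\orbdomo{n}$ for the north pyramitoid and for the south pyramitoid along $p_n^\orb$ gives a quotient of $\pi_1^\orb(p_n^\orb)$ by two families of relations. The first family is $[\gamma_{j_i},\gamma_{k_i}]=1$ for the north code lines $r_i$; this is the kernel description of $i_*$ already established. The second family is the analogous relations for the south code lines $r'_i$. As a consistency check, these relations are exactly the commutations coming from the edges of the two core trees, which together with the basis edges exhaust the edges of the bipyramitoid. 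The result is therefore the right-angled Coxeter group of the dual graph, as in Example~\ref{ejm:dim3}.

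Finally, $Z(\bipy{Y}{n})$ is the cover associated to the abelianization map to $(\mathbb{Z}/2)^n$. This map factors through $\rho$, so
\[
\pi_1(Z(\bipy{Y}{n}))=\ker\rho\,\big/\,\langle\langle\text{relations}\rangle\rangle\cap\ker\rho .
\]
Every commutator relation already lies in $\ker\rho$. Since $\ker\rho$ is normal in $\pi_1^\orb(p_n^\orb)$, the normal closure of these relations in the big group is contained in $\ker\rho$. This gives the stated quotient: $\ker\rho$ modulo the normal closure, taken in $\pi_1^\orb(p_n^\orb)$, of the commutators $[\gamma_{i_j},\gamma_{i_k}]$.

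\emph{Expected obstacle.} The main difficulty I anticipate is the bookkeeping that identifies the two descriptions. One has to check that the $\pi_1^\orb(p_n^\orb)$-conjugates of the commutators $[\gamma_{j_i},\gamma_{k_i}]$ are exactly the lifts of the meridian circles $\pi_n^{-1}(r_i)$ in $F_{b_n}$, which is the content of the circle count in the Proposition before the last Corollary. Alongside this, one must interpret the statement's notation $\bigoplus(\mathbb{Z}/n)e_i$ as $(\mathbb{Z}/2)^n$, and read the relations in its presentation as $\gamma_i^2=1$ together with $[\gamma_i,\gamma_{i+1}]=1$.
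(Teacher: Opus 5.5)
Your proposal is correct, and Step 1 is exactly the paper's proof, which says only that the statement follows from the splitting \eqref{zbp}.

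\textbf{The caveats.} Your caveat about ``monomorphisms'' is confirmed by the paper's own covering square at the end of \S\ref{sec:orbifolds}. There, $\varphi$ is the restriction of the surjection $i_*$ to $\pi_1(F_{b_n})=\ker\rho$, with $\rho$ as in the statement. So $\varphi$ and $\psi$ are epimorphisms, with kernels $K$ and $K'$: the normal closures in $\pi_1^{\orb}(p_n^{\orb})$ of the north and south commutators. The amalgam is therefore a pushout. Your other two corrections are also right: the target should be $\bigoplus_{i\in\mathbb{Z}/n}(\mathbb{Z}/2)e_i$, and the presentation needs the relations $\gamma_i^2=1$.

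\textbf{The second description.} Here your route differs from the paper's. You glue first, at the level of the base orbifold. Orbifold van Kampen along $p_n^{\orb}$ gives $\pi_1^{\orb}(\bipy{Y}{n}^{\orb})=\pi_1^{\orb}(p_n^{\orb})/KK'$, and you then take the kernel of the map to $(\mathbb{Z}/2)^n$. The route implicit in the paper's one-line proof passes to covers first. The covering square gives $\pi_1(H_{b_n})=\ker\rho/K$ and $\pi_1(H'_{b_n})=\ker\rho/K'$, and van Kampen applied to \eqref{zbp} then yields $\ker\rho/KK'$. Both orders produce the same group.

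\textbf{What each order buys.} Your order proves the second description independently of the Heegaard splitting. So the obstacle you anticipate, matching conjugates of the commutators with lifts of the meridian circles, is not needed for the statement. The paper's order exhibits the relations directly as meridians of $H_{b_n}$ and $H'_{b_n}$, namely the lifts of the code lines. That is what the examples use to find canceling pairs.

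In either order, identifying $\ker\varphi$ with $K$ is purely algebraic. Since $K\subseteq\ker\rho$, the kernel of $i_*$ restricted to $\ker\rho$ is $K$. So the count of essential circles from the earlier proposition is not required.
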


\begin{proof}
  This is a consequence of (\ref{zbp}).
\end{proof}

The homology group $H_1 (Z(b\mathbf{Y}_n))$ can be computed using the Mayer-Vietoris sequence.
\begin{equation}\label{M-V}
  H_1(F_{b_n})\overset{i_n -i'_n}\longrightarrow H_1(H_{b_n})\oplus H_1(H_{b_n}) \overset{j_n +j'_n}\longrightarrow H_1 (Z(b\mathbf{Y}_n))
\end{equation}
In another work we will study these homology groups as $\mathbb{Z}[(\mathbb{Z}/2)^n]$-modules.

\subsection{Examples}

\begin{ejm}
The tetrahedron $T$ is a $4$-bipyramitoid, it has $4$ faces and the line $L$ in Figure~\ref{ftetraNS} pass once through all the faces. The plane defined by the line $L$ divides $T$ into two parts, North and South, which are smoothings of the $4$-pyramid, in fact they are triangular prisms. Every one has a core tree formed by one edge, $l$ and $l'$ respectively.

 \begin{figure}[ht]
\begin{center}
\begin{tikzpicture}
	\begin{scope}
		\coordinate (A) at (1,0);
		\coordinate (B) at (0,1);
		\coordinate (C) at (-1,0);
		\coordinate (D) at (0,-1);
		\coordinate (AD) at ($ .5*(A) + .5*(D)$) ;
		\coordinate (BD) at ($ .75*(B) + .25*(D)$) ;
		\coordinate (BC) at ($ .5*(B) + .5*(C)$) ;
		\coordinate (AC) at ($ .25*(A) + .75*(C)$) ;

		\draw  (D) -- (B) -- (A) -- (D) -- (C) -- (B);
		\draw[dotted] (A) -- (C);
		\draw[green, line width=1pt] (AD) --  (BD) -- (BC);
		\draw[green, line width=1pt, dotted] (BC) --  (AC) -- (AD);
	\end{scope}

		\begin{scope}[xshift=3cm]
		\coordinate (A) at (1,0);
		\coordinate (B) at (0,1);
		\coordinate (C) at (-1,0);
		\coordinate (D) at (0,-1);
		\coordinate (AD) at ($ .6*(A) + .4*(D)$) ;
		\coordinate (BC) at ($ .4*(B) + .6*(C)$) ;
		\coordinate (CD) at ($ .6*(D) + .4*(C)$) ;
		\coordinate (AB) at ($ .4*(A) + .6*(B)$) ;

		\draw  (D) -- (B) -- (A) -- (D) -- (C) -- (B);
		\draw[dotted] (A) -- (C);
		\draw[green, line width=1pt] (AD) --  (AB)  (CD) -- node[right,black] {$L$} (BC);
		\draw[green, line width=1pt, dotted] (AD) --  (CD)  (AB)-- (BC);
	\end{scope}

			\begin{scope}[xshift=6cm]
		\coordinate (A) at (1,1);
		\coordinate (B) at (-1,1);
		\coordinate (C) at (-1,-1);
		\coordinate (D) at (1,-1);
		\coordinate (E) at (-1/2, 0) ;
		\coordinate (F) at (1/2,0) ;

		\draw[green]  (A) -- (B) -- (C) -- (D) -- cycle;
		\draw[blue]  (B) -- (E) -- (C)   (D) -- (F) -- (A);
		\draw[red] (E)  -- node[below] {$l$} (F);
\node[below] at (0,-1) {North};
	\end{scope}

				\begin{scope}[xshift=9cm]
		\coordinate (A) at (1,1);
		\coordinate (B) at (-1,1);
		\coordinate (C) at (-1,-1);
		\coordinate (D) at (1,-1);
		\coordinate (E) at (0,-1/2) ;
		\coordinate (F) at (0,1/2) ;

		\draw[green]  (A) -- (B) -- (C) -- (D) -- cycle;
		\draw[blue]  (D) -- (E) -- (C)   (B) -- (F) -- (A);
		\draw[red] (E)  -- node[right] {$l'$} (F);
		\node[below] at (0,-1) {South};
	\end{scope}
\end{tikzpicture}

\caption{The tetrahedron and it division by the line $L$.}\label{ftetraNS}

\end{center}
\end{figure}

 \begin{figure}[ht]
 \begin{center}
\begin{tikzpicture}

	\begin{scope}[xshift=0cm]
		\coordinate (A) at (1,1);
		\coordinate (B) at (-1,1);
		\coordinate (C) at (-1,-1);
		\coordinate (D) at (1,-1);
		\coordinate (E) at (-1/2, 0) ;
		\coordinate (F) at (1/2,0) ;

		\draw[green]  (A) -- (B) -- (C) -- (D) -- cycle;
		\draw[blue]  (B) -- (E) -- (C)   (D) -- (F) -- (A);

		\draw[orange, line width=2pt] ($.5*(A) + .5*(B)$)  -- node[right, pos=.25] {$r$} ($.5*(C) + .5*(D)$);
		\fill[white] (0,0) circle[radius=.2cm];
		\draw[red] (E)  node[below right]  {$l$} -- (F);
		\node[below] at (0,-1) {North};
	\end{scope}

	\begin{scope}[xshift=3cm]
		\coordinate (A) at (1,1);
		\coordinate (B) at (-1,1);
		\coordinate (C) at (-1,-1);
		\coordinate (D) at (1,-1);
		\coordinate (E) at (0,-1/2) ;
		\coordinate (F) at (0,1/2) ;

		\draw[green]  (A) -- (B) -- (C) -- (D) -- cycle;
		\draw[blue]  (D) -- (E) -- (C)   (B) -- (F) -- (A);
			\draw[green!50!black, line width=2pt] ($.5*(A) + .5*(D)$)  -- node[below, pos=.75] {$r'$} ($.5*(B) + .5*(C)$);
		\fill[white] (0,0) circle[radius=.2cm];
		\draw[red] (E)   -- (F)  node[below right] {$l'$};
		\node[below] at (0,-1) {South};
	\end{scope}

		\begin{scope}[xshift=6cm]
		\coordinate (A) at (1,1);
		\coordinate (B) at (-1,1);
		\coordinate (C) at (-1,-1);
		\coordinate (D) at (1,-1);
		\coordinate (E) at (0,-1/2) ;
		\coordinate (F) at (0,1/2) ;

		\draw[green]  (A) -- (B) -- (C) -- (D) -- cycle;
			\draw[orange, line width=2pt] ($.5*(A) + .5*(B)$)  -- node[right, pos=.25] {$r$} ($.5*(C) + .5*(D)$);

		\draw[green!50!black, line width=2pt] ($.5*(A) + .5*(D)$)  -- node[below, pos=.75] {$r'$} ($.5*(B) + .5*(C)$);

		\node[below] at (0,-1) {$p_4$};
	\end{scope}
\end{tikzpicture}

\caption{The code for north and south parts and in the polygon $p_4$.}  \label{ftetraCodigo}
\end{center}
\end{figure}

The manifold $Z_N$ obtained by reflecting North on all the faces but the basis $p_4$ is a solid torus $H_1$ with the circle formed by $4$ copies of $l$ as core. The circle formed by $4$ copies of $r$ is a meridian on the boundary $\partial H_1=F_1$ (Figure~\ref{ftetraF1}). In fact in $\partial H_1=F_1$, $r_1$ gives rise to $4$ parallel meridians. Only one is needed.  Analogously, $Z_S$ is a solid torus $H'_1$ with core generated by $l'$ and meridian generated by $r'$. The torus boundary $F_1$ of both solid torus  is made up by $2^4=16$ copies of $p_4$. In $F_1$ the pair composed by a meridians of $H_1$ and a meridian of $H'_1$ form an homology basis. Therefore:
\[
Z(T)= H_{1} \bigcup _{\partial H_1 \equiv \partial H'_{1}} H'_{1} = \mathbb{S}^3.
\]
 \begin{figure}[ht]
 \begin{center}
\begin{tikzpicture}
\begin{scope}[scale=2]
\foreach \x in {1, 2, 3}{
	\draw (\x/4, 0) -- (\x/4, 1);
	\draw (0,\x/4) -- (1,\x/4);
}
\foreach \x in {0, ..., 3}{
	\draw[orange, line width=2pt] (\x/4 + 1/8, 0) -- (\x/4+ 1/8, 1);
	\draw[green!50!black, line width=2pt]  (0,\x/4+ 1/8) -- (1,\x/4+ 1/8);
}
\draw[->] (.55, 0) -- (1,0) (0,0) -- (.55,0) ;
\draw[->] (.55, 1) -- (1,1) (0,1) -- (.55,1) ;
\draw[->>] (0,.55) -- (0,1) (0,0) -- (0,.55) ;
\draw[->>] (1,.55) -- (1,1) (1,0) -- (1,.55) ;
\end{scope}

\begin{scope}[xshift=3cm, scale=2]

		\draw[orange, line width=2pt] (3/8, 0) -- (3/8, 1);
		\draw[green!50!black, line width=2pt]  (0,5/8) -- (1,5/8);
	\draw[->] (.55, 0) -- (1,0) (0,0) -- (.55,0) ;
	\draw[->] (.55, 1) -- (1,1) (0,1) -- (.55,1) ;
	\draw[->>] (0,.55) -- (0,1) (0,0) -- (0,.55) ;
	\draw[->>] (1,.55) -- (1,1) (1,0) -- (1,.55) ;
\end{scope}

\begin{scope}[xshift=6cm]
\fill[yellow!25!white] (0,0) rectangle (3, 1.5);
\fill[white] (.75,.75)  circle[radius=.375cm];
\node at (.75,.75) {$A$};
\draw[orange, line width=2pt] (.75,.75)  circle[radius=.375cm];
\draw[orange, line width=1.5pt,arrows= {->[color=black]}] (1.125, 0.75) arc[start angle=0, end angle=100, radius=.375];

\fill[white] (2.25,.75)  circle[radius=.375cm];
\node at (2.25,.75) {$\overline{A}$};
\draw[orange, line width=2pt] (2.25,.75)  circle[radius=.375cm];
\draw[orange, line width=1.5pt,arrows= {->[color=black]}] (1.875, 0.75) arc[start angle=180, end angle=80, radius=.375];

\end{scope}

\begin{scope}[xshift=12cm, yshift=1cm]
\draw (0,0) ellipse[x radius=2cm, y radius=1cm];
\draw (0,0) ellipse[x radius=1.5cm, y radius=.75cm];
\draw[line width=2pt, green!50!black] (0,0) ellipse[x radius=1.75cm, y radius=.875cm];
\fill[white] (7.5:1.75 and .875) circle[radius=.1];
\draw[line width=2pt, orange] (2,0)  arc[start angle=0, end angle=180, x radius=.25cm, y radius=.125cm];
\draw[line width=1pt, green!50!black, dotted] (2,0)  arc[start angle=0, end angle=-180, x radius=.25cm, y radius=.125cm];

\end{scope}
\end{tikzpicture}
\caption{Meridians of $H_1$ and $H'_1$ in their boundary $F_1$ and three ways to represent the Heegard spliting.}  \label{ftetraF1}
\end{center}
\end{figure}
\end{ejm}

\begin{ejm}

The triangular prism $TP$ is a $5$-bipyramitoid, it has $5$ faces and the plane drawn in Figure~\ref{prisma} passes once though all the faces. The plane  divides $TP$ into two parts, North and South, which are smoothings of the $5$-pyramid. Every one has a core tree formed by two edges,  $(l_1, l_2)\subset $ North and $(l'_1, l'_2)\subset $ South.

 \begin{figure}[ht]
\begin{center}
\begin{tikzpicture}
	\begin{scope}[scale=1.5]
		\foreach \x in {0, ..., 5}{
			\coordinate (A\x) at (\x*72 + 18:1);
		}

		\draw (A0) -- (A1) -- (A2) -- (A3) -- (A4) -- (A5);
		\coordinate (A) at (-1/2,0);
		\coordinate (B) at (0, 1/4);
		\coordinate (C) at (1/2,0);

		\draw[line width=1pt, blue] (A0) -- (C);
		\draw[line width=1pt, blue] (A1) -- (B);
		\draw[line width=1pt, blue] (A2) -- (A);
		\draw[line width=1pt, blue] (A3) -- (A);
		\draw[line width=1pt, blue] (A4) -- (C);

		\draw[line width=1pt, red] (A) node[above] {$l_1$} -- (B) -- (C) node[above] {$l_2$} ;

		\draw ($1.25/2*(A2) + 1.25/2*(A3)$) node {$1$};
		\draw ($1.25/2*(A4) + 1.25/2*(A0)$) node {$4$};
		\draw ($1.25/2*(A2) + 1.25/2*(A1)$) node {$2$};
		\draw ($1.25/2*(A0) + 1.25/2*(A1)$) node {$3$};
		\draw ($1.25/2*(A3) + 1.25/2*(A4)$) node {$5$};
		\draw[below] ($1.25/2*(A3) + 1.25/2*(A4)$) node {North};
	\end{scope}

	\begin{scope}[xshift=4cm, scale=1.5]
	\foreach \x in {0, ..., 5}{
		\coordinate (A\x) at (\x*72 + 18:1);
	}

	\draw (A0) -- (A1) -- (A2) -- (A3) -- (A4) -- (A5);
	\coordinate (A) at (-1/2,0);
	\coordinate (B) at (0, 1/4);
	\coordinate (C) at (1/2,0);

	\draw[line width=1pt, blue] (A0) -- (C);
	\draw[line width=1pt, blue] (A1) -- (B);
	\draw[line width=1pt, blue] (A2) -- (A);
	\draw[line width=1pt, blue] (A3) -- (A);
	\draw[line width=1pt, blue] (A4) -- (C);

	\draw[line width=1pt, red] (A) node[above] {$l'_1$} -- (B) -- (C) node[above] {$l'_2$} ;

	\draw ($1.25/2*(A2) + 1.25/2*(A3)$) node {$5$};
	\draw ($1.25/2*(A4) + 1.25/2*(A0)$) node {$3$};
	\draw ($1.25/2*(A2) + 1.25/2*(A1)$) node {$1$};
	\draw ($1.25/2*(A0) + 1.25/2*(A1)$) node {$2$};
	\draw ($1.25/2*(A3) + 1.25/2*(A4)$) node {$4$};
	\draw[below] ($1.25/2*(A3) + 1.25/2*(A4)$) node {South};
\end{scope}

	\begin{scope}[xshift=-4cm, yshift=-1cm]
	\foreach \x in {0, ..., 5}{
		\coordinate (A\x) at (\x*72 + 18:1);
	}
	\coordinate (A1) at (-1,3);
	\coordinate (A2) at (1,3);
	\coordinate (A3) at (0,2);

		\coordinate (B1) at (-1,0);
	\coordinate (B2) at (1,0);
	\coordinate (B3) at (0,-1);

	\coordinate (A23) at ($.25*(A2) + .75*(A3)$);
	\coordinate (B23) at ($.5*(B2) + .5*(B3)$);
	\coordinate (A13) at ($.5*(A1) + .5*(A3)$);
	\coordinate (B12) at ($.25*(B2) + .75*(B1)$);
	\coordinate (C1) at ($.5*(A1) + .5*(B1)$);

	\fill[yellow!10!white] (B23) -- (A23) -- (A13) -- (C1) -- (B12) -- cycle;

	\draw  (A1) -- node[above] {$l'_1$} (A2) -- (A3) -- (A1) -- (B1) --node[left] {$l_2$}  (B3) -- (B2) -- node[right] {$l'_2$}  (A2) (A3) -- node[left] {$l_1$} (B3);
	\draw[dotted] (B1) -- (B2);

	\draw[green!50!black, line width=2pt] (B23) --node[right, black] {$2$}  (A23) --node[above, black] {$1$}  (A13) -- node[above, black] {$5$} (C1) ;
\draw[green!50!black, line width=2pt, dotted]	 (C1) -- (B12) -- (B23);

\draw[->] ($(C1)+(-.2,-.5)$) node[left] {4} to[bend right] ($.5*(C1) + .5*(B12) +(-.1,0)$);
\draw[->] ($(B23)+(0,-.5)$) node[right] {3} to[bend left] ($.75*(B23) + .25*(B12) +(0,-.1)$);

\end{scope}

\end{tikzpicture}

\caption{The triangular prism and it division by a plane into two $P_5$.}
\label{prisma}
\end{center}
\end{figure}

The manifold $Z_N$ obtained by reflecting North (and South) on all the faces but the basis $p_5$ is a handlebody $H_{5}$ (resp. $H'_{5}$), as is shown in Figure~\ref{cod5}. The union of the two handlebodies gives rise to a Heegaard splitting of the manifold $\mathbb{S}^1 \times \mathbb{S}^2$ which is the orbifold cover of the mirror triangular prism. In order to understand this Heegard splitting we analyze the gluing of the two handlebodies by their boundary surface $F_5$ given by the codes $(p_5, \{r_1, r_2\})$ and $(p_5, \{r'_1, r'_2\})$, see Figure~\ref{prismab}.
Therefore
\[
  Z(TP)= H_{5} \bigcup _{\partial H_5  \equiv \partial H'_{5}} H'_{5} = \mathbb{S}^1\times \mathbb{S}^2.
\]

\begin{figure}[ht]
 \begin{center}
\begin{tikzpicture}
	\begin{scope}[scale=1.5]
		\foreach \x in {0, ..., 5}{
			\coordinate (A\x) at (\x*72 + 18:1);
		}

\fill[green!20!white] (A0) -- (A1) -- (A2) -- (A3) -- (A4) -- (A5);

		\draw (A0) -- (A1) -- (A2) -- (A3) -- (A4) -- (A5);
		\coordinate (A) at (-1/2,0);
		\coordinate (B) at (0, 1/4);
		\coordinate (C) at (1/2,0);

		\draw[line width=1pt, blue] (A0) -- (C);
		\draw[line width=1pt, blue] (A1) -- (B);
		\draw[line width=1pt, blue] (A2) -- (A);
		\draw[line width=1pt, blue] (A3) -- (A);
		\draw[line width=1pt, blue] (A4) -- (C);

		\draw[line width=1pt, red] (A) node[above] {$l_1$} -- (B) -- (C) node[above] {$l_2$} ;

		\draw[line width=1pt, green!50!black] ($2/3*(A3) + 1/3*(A4)$) to[bend right]
        node[left, black, pos=.25] {$r_1$} ($1/2*(A1) + 1/2*(A2)$);
		\draw[line width=1pt, green!50!black] ($1/3*(A3) + 2/3*(A4)$) to[bend left] node[right, black, pos=.25] {$r_2$} ($1/2*(A1) + 1/2*(A0)$);

		\draw ($1.25/2*(A2) + 1.25/2*(A3)$) node {$1$};
		\draw ($1.25/2*(A4) + 1.25/2*(A0)$) node {$4$};
		\draw ($1.25/2*(A2) + 1.25/2*(A1)$) node {$2$};
		\draw ($1.25/2*(A0) + 1.25/2*(A1)$) node {$3$};
		\draw ($1.25/2*(A3) + 1.25/2*(A4)$) node {$5$};
		\draw[below] ($1.25/2*(A3) + 1.25/2*(A4)$) node {North};
	\end{scope}

	\begin{scope}[xshift=4cm, scale=1.5]
		\foreach \x in {0, ..., 5}{
			\coordinate (A\x) at (\x*72 + 18:1);
		}

\fill[green!20!white] (A0) -- (A1) -- (A2) -- (A3) -- (A4) -- (A5);

		\draw (A0) -- (A1) -- (A2) -- (A3) -- (A4) -- (A5);
		\coordinate (A) at ([rotate=52] -1/2,0);
		\coordinate (B) at ([rotate=52] 0, 1/4);
		\coordinate (C) at ([rotate=52] 1/2,0);

		\draw[line width=1pt, blue] (A0) -- (C);
		\draw[line width=1pt, blue] (A1) -- (C);
		\draw[line width=1pt, blue] (A2) -- (B);
		\draw[line width=1pt, blue] (A3) -- (A);
		\draw[line width=1pt, blue] (A4) -- (A);

		\draw[line width=1pt, red] (A) -- (B) -- (C) ;

		\draw[line width=1pt, orange] ($2/3*(A4) + 1/3*(A0)$) to[bend right] node[below, black, pos=.25] {$r'_1$}  ($1/2*(A3) + 1/2*(A2)$);
		\draw[line width=1pt, orange] ($1/3*(A4) + 2/3*(A0)$) to[bend left] node[above, black, pos=.25] {$r'_2$} ($1/2*(A1) + 1/2*(A2)$);

		\draw ($1.25/2*(A2) + 1.25/2*(A3)$) node {$1$};
		\draw ($1.25/2*(A4) + 1.25/2*(A0)$) node {$4$};
		\draw ($1.25/2*(A2) + 1.25/2*(A1)$) node {$2$};
		\draw ($1.25/2*(A0) + 1.25/2*(A1)$) node {$3$};
		\draw ($1.25/2*(A3) + 1.25/2*(A4)$) node {$5$};
	\draw[below] ($1.25/2*(A3) + 1.25/2*(A4)$) node {South};
\end{scope}

	\begin{scope}[xshift=8cm, scale=1.5]
		\foreach \x in {0, ..., 5}{
			\coordinate (A\x) at (\x*72 + 18:1);
		}

\fill[green!20!white] (A0) -- (A1) -- (A2) -- (A3) -- (A4) -- (A5);

		\draw (A0) -- (A1) -- (A2) -- (A3) -- (A4) -- (A5);
		\coordinate (A) at ([rotate=52] -1/2,0);
		\coordinate (B) at ([rotate=52] 0, 1/4);
		\coordinate (C) at ([rotate=52] 1/2,0);

		\draw[line width=1pt, orange] ($2/3*(A4) + 1/3*(A0)$) to[bend right] node[below, black, pos=.15] {$r'_1$}  ($1/2*(A3) + 1/2*(A2)$);
		\draw[line width=1pt, orange] ($1/3*(A4) + 2/3*(A0)$) to[bend left]  node[above, black, pos=.1] {$r'_2$}  ($2/3*(A1) + 1/3*(A2)$);

		\draw[line width=1pt, green!50!black] ($2/3*(A3) + 1/3*(A4)$) to[bend right] node[left, black, pos=.75] {$r_1$}  ($1/3*(A1) + 2/3*(A2)$) ;
		\draw[line width=1pt, green!50!black] ($1/3*(A3) + 2/3*(A4)$) to[bend left] node[left, black, pos=.85] {$r_2$} ($1/2*(A1) + 1/2*(A0)$);

		\draw ($1.25/2*(A2) + 1.25/2*(A3)$) node {$1$};
		\draw ($1.25/2*(A4) + 1.25/2*(A0)$) node {$4$};
		\draw ($1.25/2*(A2) + 1.25/2*(A1)$) node {$2$};
		\draw ($1.25/2*(A0) + 1.25/2*(A1)$) node {$3$};
		\draw ($1.25/2*(A3) + 1.25/2*(A4)$) node {$5$};
	\draw ($1.25/2*(A3) + 1.25/2*(A4)$) node[below=3pt] {$p_5$};
\end{scope}

\end{tikzpicture}
\caption{The code for North and South parts and in the polygon $p_5$.}  \label{prismab}
\end{center}
\end{figure}

\end{ejm}

\begin{ejm}

The cube $C$ is a $6$-bipyramitoid, it has $6$ faces and  a plane passing through its center and perpendicular to the diagonal joining two opposite vertices of $C$  intersects all the faces in a hexagon $p_6$ with boundary the line $L$. This plane divides the cube into two parts, North and South, which are smoothings of the $6$-pyramid (Fig. \ref{cubo1NS}). Each part has a core tree with $3$ edges, $(l_1, l_2, l_3)\subset $ North and $(l'_1, l'_2, l'_3)\subset $ South.
 \begin{figure}[ht]
\begin{center}
\begin{tikzpicture}
	\begin{scope}[scale=1.5]
		\foreach \x in {0, ..., 6}{
			\coordinate (A\x) at (\x*60:1);
		}

		\draw (A0) -- (A1) -- (A2) -- (A3) -- (A4) -- (A5) -- (A6);
		\coordinate (O) at (0,0);
		\coordinate (A) at (30:1/2);
		\coordinate (B) at (150:1/2);
		\coordinate (C) at (-90:1/2);

		\fill[green!20!white] (A0) -- (A1) -- (A2) -- (A3) -- (A4) -- (A5) -- (A6);

		\draw[line width=1pt, blue] (A0) -- (A);
		\draw[line width=1pt, blue] (A1) -- (A);
		\draw[line width=1pt, blue] (A2) -- (B);
		\draw[line width=1pt, blue] (A3) -- (B);
		\draw[line width=1pt, blue] (A4) -- (C);
		\draw[line width=1pt, blue] (A5) -- (C);

		\draw[line width=1pt, red] (O) -- (A) node[below] {$l_2$}  (O) -- (C) node[left] {$l_3$} (O) -- (B) node[above right] {$l_1$} ;

		\draw[below] ($1.25/2*(A4) + 1.25/2*(A5)$) node {North};
	\end{scope}

	\begin{scope}[xshift=4cm, scale=1.5]
		\foreach \x in {0, ..., 6}{
			\coordinate (A\x) at (\x*60:1);
		}

		\draw (A0) -- (A1) -- (A2) -- (A3) -- (A4) -- (A5) -- (A6);
		\coordinate (O) at (0,0);
		\coordinate (A) at (90:1/2);
		\coordinate (B) at (210:1/2);
		\coordinate (C) at (-30:1/2);

		\fill[green!20!white] (A0) -- (A1) -- (A2) -- (A3) -- (A4) -- (A5) -- (A6);

		\draw[line width=1pt, blue] (A0) -- (C);
		\draw[line width=1pt, blue] (A1) -- (A);
		\draw[line width=1pt, blue] (A2) -- (A);
		\draw[line width=1pt, blue] (A3) -- (B);
		\draw[line width=1pt, blue] (A4) -- (B);
		\draw[line width=1pt, blue] (A5) -- (C);

		\draw[line width=1pt, red] (O) -- (A) node[below right] {$l'_1$}  (O) -- (C) node[below left] {$l'_2$} (O) -- (B) node[above] {$l'_3$} ;

		\draw[below] ($1.25/2*(A4) + 1.25/2*(A5)$) node {South};
\end{scope}

	\begin{scope}[xshift=-4cm, yshift=-1.25cm, scale=1.6]

	\coordinate (A1) at (0,0);
	\coordinate (A2) at (1,0);
	\coordinate (A3) at (1,1);
	\coordinate (A4) at (0, 1);

	\coordinate (B1) at ($(A1) + (-1/2,1/2)$);
	\coordinate (B2) at ($(A2) + (-1/2,1/2)$);
	\coordinate (B3) at ($(A3) + (-1/2,1/2)$);
	\coordinate (B4) at ($(A4) + (-1/2,1/2)$);

	\coordinate (A12) at ($.5*(A1) + .5*(A2)$);
	\coordinate (A23) at ($.5*(A2) + .5*(A3)$);
	\coordinate (C3) at ($.5*(A3) + .5*(B3)$);
	\coordinate (B34) at ($.5*(B3) + .5*(B4)$);
	\coordinate (B41) at ($.25*(B4) + .75*(B1)$);
	\coordinate (B23) at ($.5*(B2) + .5*(B3)$);

	\coordinate (C1) at ($.5*(A1) + .5*(B1)$);

	\fill[yellow!30!white] (A12) -- (A23) -- (C3) -- (B34) -- (B41) -- (C1) -- cycle;

	\draw  (B1) -- node[left] {$L$} (A1) -- (A2) -- (A3) -- (A4) -- (A1)  (A4)-- (B4) -- (B1) (B4) -- (B3) -- (A3) --  (B3);
\draw[dotted] (B1) -- (B2) -- (B3) (B2) -- (A2);

\draw[green!50!black, line width=1pt] (A12) -- (A23)  (C3) -- (B34)  (B41) -- (C1);
\draw[green!50!black, line width=1pt, dotted] (C1) -- (A12)  (A23) --  (C3)  (B34) -- (B41);

\end{scope}

\end{tikzpicture}
\caption{Cube and it division by the line $L$.}\label{cubo1NS}
\end{center}
\end{figure}

 \begin{figure}[ht]
 \begin{center}
\begin{tikzpicture}
	\begin{scope}[scale=1.5]
		\foreach \x in {0, ..., 6}{
			\coordinate (A\x) at (\x*60:1);
		}

		\draw (A0) -- (A1) -- (A2) -- (A3) -- (A4) -- (A5) -- (A6);
		\coordinate (O) at (0,0);
		\coordinate (A) at (30:1/2);
		\coordinate (B) at (150:1/2);
		\coordinate (C) at (-90:1/2);

		\fill[green!20!white] (A0) -- (A1) -- (A2) -- (A3) -- (A4) -- (A5) -- (A6);

		\draw[line width=1pt, blue] (A0) -- (A);
		\draw[line width=1pt, blue] (A1) -- (A);
		\draw[line width=1pt, blue] (A2) -- (B);
		\draw[line width=1pt, blue] (A3) -- (B);
		\draw[line width=1pt, blue] (A4) -- (C);
		\draw[line width=1pt, blue] (A5) -- (C);

		\draw[line width=1pt, red] (O) -- (A)   (O) -- (C)  (O) -- (B)  ;

\draw[green!50!black, line width=1pt] ($1/4*(A4) -- 3/4*(A3)$) to[bend right=40pt] node[pos=.75, right, black] {$r_1$}($1/4*(A1) -- 3/4*(A2)$);

\draw[green!50!black, line width=1pt] ($1/4*(A3) -- 3/4*(A4)$) to[bend left=40pt] node[pos=.25, above, black] {$r_3$}($1/4*(A0) -- 3/4*(A5)$);

\draw[green!50!black, line width=1pt] ($1/4*(A5) -- 3/4*(A0)$) to[bend left=40pt] node[pos=.25, below, black] {$r_2$}($1/4*(A2) -- 3/4*(A1)$);

	\end{scope}

	\begin{scope}[xshift=4cm, scale=1.5]
		\foreach \x in {0, ..., 6}{
			\coordinate (A\x) at (\x*60:1);
		}

		\draw (A0) -- (A1) -- (A2) -- (A3) -- (A4) -- (A5) -- (A6);
		\coordinate (O) at (0,0);
		\coordinate (A) at (90:1/2);
		\coordinate (B) at (210:1/2);
		\coordinate (C) at (-30:1/2);

		\fill[green!20!white] (A0) -- (A1) -- (A2) -- (A3) -- (A4) -- (A5) -- (A6);

		\draw[line width=1pt, blue] (A0) -- (C);
		\draw[line width=1pt, blue] (A1) -- (A);
		\draw[line width=1pt, blue] (A2) -- (A);
		\draw[line width=1pt, blue] (A3) -- (B);
		\draw[line width=1pt, blue] (A4) -- (B);
		\draw[line width=1pt, blue] (A5) -- (C);

		\draw[line width=1pt, red] (O) -- (A)  (O) -- (C)  (O) -- (B) ;

\draw[orange, line width=1pt] ($1/4*(A2) -- 3/4*(A3)$) to[bend left=40pt] node[pos=.35, above, black] {$r'_3$}($1/4*(A5) -- 3/4*(A4)$);

\draw[orange, line width=1pt] ($1/4*(A4) -- 3/4*(A5)$) to[bend left=40pt] node[pos=.35, left, black] {$r'_2$}($1/4*(A1) -- 3/4*(A0)$);

\draw[orange, line width=1pt] ($1/4*(A0) -- 3/4*(A1)$) to[bend left=40pt] node[pos=.35, below, black] {$r'_1$}($1/4*(A3) -- 3/4*(A2)$);

\end{scope}

	\begin{scope}[xshift=8cm, scale=1.5]
		\foreach \x in {0, ..., 6}{
			\coordinate (A\x) at (\x*60:1);
		}

		\draw (A0) -- (A1) -- (A2) -- (A3) -- (A4) -- (A5) -- (A6);
		\coordinate (O) at (0,0);
		\coordinate (A) at (90:1/2);
		\coordinate (B) at (210:1/2);
		\coordinate (C) at (-30:1/2);

		\fill[green!20!white] (A0) -- (A1) -- (A2) -- (A3) -- (A4) -- (A5) -- (A6);

\draw[orange, line width=1pt] ($1/4*(A2) -- 3/4*(A3)$) to[bend left=40pt] node[pos=.6, left, black] {$r'_3$}($1/4*(A5) -- 3/4*(A4)$);

\draw[orange, line width=1pt] ($1/4*(A4) -- 3/4*(A5)$) to[bend left=40pt] node[pos=.4, right, black] {$r'_2$}($1/4*(A1) -- 3/4*(A0)$);

\draw[orange, line width=1pt] ($1/4*(A0) -- 3/4*(A1)$) to[bend left=40pt] node[pos=.5, above, black] {$r'_1$}($1/4*(A3) -- 3/4*(A2)$);

\draw[green!50!black, line width=1pt] ($1/4*(A4) -- 3/4*(A3)$) to[bend right=40pt] node[pos=.6, left, black] {$r_1$}($1/4*(A1) -- 3/4*(A2)$);

\draw[green!50!black, line width=1pt] ($1/4*(A3) -- 3/4*(A4)$) to[bend left=40pt] node[pos=.5, below, black] {$r_3$}($1/4*(A0) -- 3/4*(A5)$);

\draw[green!50!black, line width=1pt] ($1/4*(A5) -- 3/4*(A0)$) to[bend left=40pt] node[pos=.6, right, black] {$r_2$}($1/4*(A2) -- 3/4*(A1)$);

\end{scope}

\end{tikzpicture}

\caption{The code for north and south parts and in the polygon $p_6$. }  \label{fcuboCodigo}
\end{center}
\end{figure}
 Let
\[
\pi_C : Z(C) \longrightarrow C
\]
be the orbifold cover. Then $\pi_C^{-1}(p_6) = F_{17}$ divides $Z(C)$ into two handlebodies $H_{17}$ and $H'_{17}$. That is a Heegaard splitting of $Z(C)$. The preimage $\pi_C^{-1}(r_i)\subset F_{17}$ consists of meridians of $H_{17}$,
and analogously, $\pi_C^{-1}(r'_i)\subset F_{17}$ consists of meridians of $H'_{17}$. By an analysis similar to the one made in the case of the triangular prism, studying  the intersection points between those families of curves $r_i$ and $r'_i$, we can find  canceling pairs reducing the genus of the Heegaard splitting of $Z(C)$. There exist $14$ canceling pairs and
$Z(C)=\mathbb{S}^1 \times \mathbb{S}^1\times \mathbb{S}^1 $.
\end{ejm}

\subsection{Trapezohedron (antibipyramid or Gyrobipyramid)}
\mbox{}

The study of the cube as a bipyramitoid (Figure~\ref{cubo1NS}) suggests the following generalization:
\begin{dfn}
An \emph{$n$-trapezohedron}, (\emph{$n$-antibipyramid} or \emph{$n$-gyrobipyramid}) $\girobipy{Y}{n}$  is the result of pasting together two $Rn$-pyramids (pyramids in a geometry such that all its dihedral angles are  $\frac{\pi }{2}$ angles)  along a plane parallel to a  basis face by a $ \frac{\pi}{n}$-turn.
 It has $2n$ faces (all quadrangles) and $2$~apices. In the equatorial zone of the polyhedra the faces of both copies of the $n$-pyramid are interleaved as in a gear.
\end{dfn}

Note that \emph{$n$-trapezohedrons} are particular cases of $2n$-bipyramitoids (in general non-simple at the apices).

\begin{ejm}
    The cube is the $3$-trapezohedron $\girobipy{Y}{3}$, the only $n$-trapezohedron that is a simple polyhedron, see Figure~\ref{FBP3y4}.
\end{ejm}

 \begin{figure}[ht]
\begin{center}
\begin{tikzpicture}
\begin{scope}[yshift=.25cm]
\coordinate (N) at (0, 1);
\coordinate (S) at (0, -2);
\coordinate (WN) at (-1, 0);
\coordinate (EN) at (1, 0);
\coordinate (B) at (0, 0);
\coordinate (F) at (0, -1);
\coordinate (WS) at (-1, -1);
\coordinate (ES) at (1, -1);

\draw[line width=1pt] (N) node[above] {$N$}-- (EN) -- (F) -- (WN) -- cycle
(WN) -- (WS) -- (S) node[below] {$S$} -- (ES) -- (EN) (F) -- (S);
\draw[dotted] (ES) -- (B) -- (WS) (N) -- (B);
\draw[line width=1pt, red] ($2/3*(WN) + 1/3*(WS)$) -- ($2/3*(F) + 1/3*(WN)$) -- ($2/3*(F) + 1/3*(EN)$) -- ($2/3*(EN) + 1/3*(ES)$);
\draw[line width=1pt, red, dotted] ($2/3*(WN) + 1/3*(WS)$) -- ($3/4*(B) + 1/4*(WS)$) -- ($3/4*(B) + 1/4*(ES)$) -- ($2/3*(EN) + 1/3*(ES)$);
\end{scope}
\begin{scope}[xshift=3cm, yshift=-.25cm]
\coordinate (A) at (-30:.75);
\coordinate (B) at (90:.75);
\coordinate (C) at (-150:.75);
\coordinate (P) at (30:1.25);
\coordinate (O) at (0, 0);
\coordinate (Q) at (150:1.25);
\coordinate (R) at (-90:1.25);
\coordinate (U) at (30:1.75);
\coordinate (V) at (150:1.75);
\coordinate (W) at (-90:1.75);

\fill[green!25!white] (O) circle [radius=1.75];
\draw[red] (O) circle [radius=1];

\draw (O) node[above right] {$N$} -- (A) (C) -- (O) -- (B)
(A) -- (P) -- (B) -- (Q) -- (C) -- (R) -- cycle
(P) -- (U) (Q) -- (V) (R) -- (W);

\draw[line width=1.5pt] (2, -2) -- (2, 2);

\end{scope}

\begin{scope}[xshift=6.5cm]
\coordinate (N) at (0, 2);
\coordinate (S) at (0, -2);
\coordinate (A1) at (-.25, .5);
\coordinate[shift={(0,.5)}] (A2) at (0:1);
\coordinate (A3) at (.25, .5);
\coordinate[shift={(0,.5)}] (A4) at (180:1);
\coordinate (B1) at (.25, -.5);
\coordinate[shift={(0,-.5)}]  (B2) at (0:1);
\coordinate (B3) at (-.25,-.5);
\coordinate[shift={(0,-.5)}]  (B4) at (180:1);
\coordinate (C1) at ($1/2*(A4) + 1/2*(B4)$);
\coordinate (C2) at ($1/4*(A1) + 3/4*(B4)$);
\coordinate (C3) at ($4/5*(B1) + 1/5*(A1)$);
\coordinate (C4) at ($2/3*(B1) + 1/3*(A2)$);
\coordinate (C5) at ($3/5*(A2) + 2/5*(B2)$);
\coordinate (C6) at ($3/4*(A3) + 1/4*(B2)$);
\coordinate (C7) at ($3/4*(A3) + 1/4*(B3)$);
\coordinate (C8) at ($2/3*(A4) + 1/3*(B3)$);

\draw[line width=1pt] (N) node[above] {$N$}-- (A4) -- (B4) -- (S) node[below] {$S$} -- (B2) -- (A2) -- cycle (N) -- (A1) (S) -- (B1)
(B4) -- (A1) -- (B1) -- (A2);
\draw[dotted] (N) -- (A3) -- (B3) -- (S) (A4) -- (B3) (A3) -- (B2);
\draw[line width=1.2pt, red] (C1) -- (C2) -- (C3) -- (C4) -- (C5);
\draw[line width=1pt, red, dotted] (C5) -- (C6) -- (C7) -- (C8) -- (C1) ;
\end{scope}

\begin{scope}[xshift=10cm, yshift=-.25cm]
\coordinate (A) at (0:.75);
\coordinate (B) at (90:.75);
\coordinate (C) at (180:.75);
\coordinate (D) at (-90:.75);
\coordinate (P) at (45:1.25);
\coordinate (O) at (0, 0);
\coordinate (Q) at (135:1.25);
\coordinate (R) at (-135:1.25);
\coordinate (S) at (-45:1.25);
\coordinate (U) at (45:1.75);
\coordinate (V) at (135:1.75);
\coordinate (W) at (-135:1.75);
\coordinate (X) at (-45:1.75);

\fill[green!25!white] (O) circle [radius=1.75];
\draw[red] (O) circle [radius=1];

\draw (D) -- (O) node[above right] {$N$} -- (A) (C) -- (O) -- (B)
(A) -- (P) -- (B) -- (Q) -- (C) -- (R) -- (D) -- (S) -- cycle
(P) -- (U) (Q) -- (V) (R) -- (W) (S) -- (X);

\end{scope}
\end{tikzpicture}

\caption{The bipyramitoids $\girobipy{Y}{3}$ and $\girobipy{Y}{4}$ with the stereographic projection and a equatorial cut in red.}
\label{FBP3y4}
\end{center}
\end{figure}

 \begin{figure}[ht]
\begin{center}
\begin{tikzpicture}

\begin{scope}
\coordinate (O) at (0, 0);
\fill[green!25!white] (O) circle [radius=1.75];
\foreach \x in {0,...,5}
{
\coordinate (A\x) at ({72*\x+18}:.75);
\draw (O) -- (A\x);
}
\foreach \x in {0,...,5}
{
\coordinate (B\x) at ({72*\x+54}:1.25);
\coordinate (C\x) at ({72*\x+54}:1.75);
\draw (B\x) -- (C\x);
}

\foreach \x[evaluate={\y=\x+1}] in {0,...,4}
{
\draw (A\x) -- (B\x) -- (A\y);
}

\draw[red] (O) circle [radius=1];

\end{scope}

\begin{scope}[xshift=3cm]

\coordinate (O) at (0, 0);
\draw[red, line width=1.2pt] (O) circle [radius=1];
\clip (0,0) circle [radius=1cm];
\fill[green!25!white] (O) circle [radius=1.75];
\foreach \x in {0,...,5}
{
\coordinate (A\x) at ({72*\x+18}:.75);
\draw (O) -- (A\x);
}
\foreach \x in {0,...,5}
{
\coordinate (B\x) at ({72*\x+54}:1.25);
\coordinate (C\x) at ({72*\x+54}:1.75);
\draw (B\x) -- (C\x);
}

\foreach \x[evaluate={\y=\x+1}] in {0,...,4}
{
\draw (A\x) -- (B\x) -- (A\y);
}

\end{scope}
\end{tikzpicture}

\caption{Stereographic projection of a $5$-trapezohedron with the equatorial cut and the north-half part.  }\label{bipyre}
\end{center}
\end{figure}

Any smoothing of the bipyramitoid $\girobipy{Y}{n}$ is obtained by pasting together two smoothings of a $2n$-pyramitoid obtained by truncation of all the vertices in the basis of an  $n$-pyramid. The gluing is done  along  the  basis face in such a way that an edge in the basis bounds a triangular face in only one side.
Note that a smoothing of~$\girobipy{Y}{n}$ is a simple $2n$\nobreakdash-bi\-pyramitoid, and
Corollary~\ref{tzdegyrobipiramoide} below holds.

\begin{coro}\label{tzdegyrobipiramoide}
  Let $s\girobipy{Y}{n}$ be a smoothing of an $n$-trapezohedron and
  let $Z(\sgirobipy{Y}{n})$ be the manifold obtained by reflection on all their faces. Then $Z(\sgirobipy{Y}{n})$ is the result of pasting together $2$~handlebodies $H_{b_{2n}}$ by a homeomorphism defined by the code associated to the smoothing of the two pyramids.
\end{coro}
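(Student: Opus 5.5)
The plan is to reduce the corollary to Theorem~\ref{tzdebipiramoide}: show that a smoothing $s\girobipy{Y}{n}$ is a simple $2n$-bipyramitoid whose two halves are simple $2n$-pyramitoids sharing a common basis. Then apply that theorem with $2n$ in place of $n$.

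The first step is to fix the equatorial plane $\Pi$ of $\girobipy{Y}{n}$. It is the plane along which the two $Rn$-pyramids were pasted after the $\frac{\pi}{n}$-turn. Because the faces of the two pyramids interleave as in a gear, $\Pi$ meets each of the $2n$ quadrangular faces in exactly one segment and avoids all vertices. The non-simple vertices of $\girobipy{Y}{n}$ are only the two apices $N$ and $S$, and these lie off $\Pi$. The $2n$ equatorial vertices are simple.

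Next I perform the smoothing in small neighbourhoods of $N$ and $S$. Such neighbourhoods can be chosen disjoint from $\Pi$, so the combinatorics of $\Pi\cap s\girobipy{Y}{n}$ is unchanged. The plane $\Pi$ still cuts every face exactly once without crossing a vertex, so $s\girobipy{Y}{n}$ is a $2n$-bipyramitoid. It is simple because all its vertices are either the original simple equatorial vertices or those produced by the smoothing.

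The two halves are the key point. The north half is the $n$-pyramid with apex $N$, with all $n$ vertices of its original basis truncated by $\Pi$; its basis is the $2n$-gon $p_{2n}=\Pi\cap s\girobipy{Y}{n}$. This half has $2n$ lateral faces, each meeting $p_{2n}$ in an edge, so it is a $2n$-pyramitoid. In it the $n$ truncation triangles alternate with the $n$ faces through the (smoothed) apex. By the theorem above that every complete smoothing of $\mathcal{Y}_n$ is a simple pyramitoid, and Proposition~\ref{prop:labels}\ref{prop5}, the north half is a simple $2n$-pyramitoid. The same holds for the south half. Because of the $\frac{\pi}{n}$-turn, each edge of $p_{2n}$ bounds a triangular lateral face on exactly one side, which matches the description preceding the corollary.

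Finally, Theorem~\ref{tzdebipiramoide} yields $Z(\sgirobipy{Y}{n})=H_{b_{2n}}\cup_{F_{b_{2n}}}H'_{b_{2n}}$. Here each handlebody is given by Theorem~\ref{hn} applied to the corresponding half. The gluing is determined by the two codes $(p_{2n},\{r_i\})$ and $(p_{2n},\{r'_i\})$ of the smoothed halves, via the meridian circles $\pi^{-1}(r_i)$ and $\pi^{-1}(r'_i)$ in $F_{b_{2n}}$.

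The main obstacle I expect is justifying that the smoothing can be localized away from $\Pi$ while producing a genuine polyhedron, with no new faces and simple vertices. I would handle this as in the proof of the theorem on smoothings of $\mathcal{Y}_n$: contract each core tree to the apex in reverse, taking the deformations small enough that the equatorial section is combinatorially constant.
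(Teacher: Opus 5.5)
Your proposal is correct and follows the paper's route. The paper also deduces the corollary directly from Theorem~\ref{tzdebipiramoide}, noting that a smoothing of the trapezohedron is a simple $2n$-bipyramitoid whose two halves are smoothed $n$-pyramids with all basis vertices truncated, with the $n$ triangular faces sitting on alternating edges of the equatorial $2n$-gon. One small slip that does not affect the argument: the $n$ basis vertices of the north $n$-pyramid are truncated by the planes of the south faces, not by $\Pi$, which instead supports the new basis $p_{2n}$.
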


\begin{proof}
  It is a direct consequence of Theorem~\ref{tzdebipiramoide}. Note that the two
  involved $2n$\nobreakdash-pyr\-amitoids have $n$ triangular faces corresponding to alternating edges
  in the equatorial plane
(the red line in Figure \ref{bipyre}).
\end{proof}

The fact that the $2n$-pyramitoids have $n$ alternating triangular faces makes them special. It is also interesting to perform the same smoothing in both pyramitoids. Let us study a special case.

\begin{ejm}{A smoothing of $gb\mathcal{Y}_{4}$}.

The polyhedron $gb\mathcal{Y}_{4}$ has $8$ cuadrangular faces, $8$ vertices ($6$ simples and $2$ with valence $4$) and $12$ edges. Is $44444444$ in the notation of \cite{Dutch20}.

In $gb\mathcal{Y}_{4}$, the North and South parts defined by the equatorial cut have one smoothing (Figure \ref{fBP4NSd}).

 \begin{figure}[ht]
\begin{center}
\begin{tikzpicture}

\begin{scope}[]

\coordinate (O) at (0, 0);
\node[regular polygon, fill=green!25!white, minimum size=2.5cm, regular polygon sides=8] {};
\foreach \x in {0,...,8}
{
\coordinate (A\x) at ({45*\x-22.5}:1.25);
}
\foreach \x in {0,...,8}
{
\coordinate (B\x) at ({90*\x}:.75);
\draw (O) -- (B\x);
}
\foreach \x[evaluate={\y=\x+1}] in {0,...,7}
{
\draw[line width=1pt, red] (A\x) -- (A\y);
}
\foreach \x[evaluate={\y=2*\x+1}, evaluate={\t=2*\x}] in {0, ..., 3}
{
\draw (A\t) -- (B\x) -- (A\y);
}
\node[above right] at (O) {$N$};
\foreach \j in {1, ..., 8}
{
\node at ({-45*\j-90}:1.375) {$\j$};}
\end{scope}

\begin{scope}[xshift=3.5cm]

\coordinate (O) at (0, 0);
\node[regular polygon, fill=green!25!white, minimum size=2.5cm, regular polygon sides=8] {};
\foreach \x in {0,...,8}
{
\coordinate (A\x) at ({45*\x-22.5}:1.25);
}
\foreach \x in {0,...,8}
{
\coordinate (B\x) at ({90*\x + 45}:.75);
\draw (O) -- (B\x);
}

\foreach \x[evaluate={\y=\x+1}] in {0,...,7}
{
\draw[line width=1pt, red] (A\x) -- (A\y);
}
\foreach \x[evaluate={\y=2*\x+1}, evaluate={\t=2*\x + 2}] in {0, ..., 3}
{
\draw (A\t) -- (B\x) -- (A\y);
}
\node[right] at (O) {$S$};
\foreach \j in {1, ..., 8}
{
\node at ({45*\j-180}:1.375) {$\j$};}
\end{scope}

\begin{scope}[xshift=7cm]

\coordinate (O) at (0, 0);
\coordinate (O1) at (45:.25);
\coordinate (O2) at (225:.25);
\node[regular polygon, fill=green!25!white, minimum size=2.5cm, regular polygon sides=8] {};
\draw[line width=1pt,red] (O1) --(O2);

\foreach \x in {0,...,8}
{
\coordinate (A\x) at ({45*\x-22.5}:1.25);
}
\foreach \x in {0,...,8}
{
\coordinate (B\x) at ({90*\x}:.75);
}
\draw (O1) -- (B0);
\draw (O1) -- (B1);
\draw (O2) -- (B2);
\draw (O2) -- (B3);
\foreach \x[evaluate={\y=\x+1}] in {0,...,7}
{
\draw[line width=1pt, red] (A\x) -- (A\y);
}
\foreach \x[evaluate={\y=2*\x+1}, evaluate={\t=2*\x}] in {0, ..., 3}
{
\draw (A\t) -- (B\x) -- (A\y);
}

\foreach \j in {1, ..., 8}
{
\node at ({-45*\j-90}:1.375) {$\j$};}
\end{scope}

\begin{scope}[xshift=10.5cm]

\coordinate (O) at (0, 0);
\coordinate (O1) at (90:.25);
\coordinate (O2) at (-90:.25);
\node[regular polygon, fill=green!25!white, minimum size=2.5cm, regular polygon sides=8] {};
\draw[line width=1pt,red] (O1) --(O2);
\foreach \x in {0,...,8}
{
\coordinate (A\x) at ({45*\x-22.5}:1.25);
}
\foreach \x in {0,...,8}
{
\coordinate (B\x) at ({90*\x + 45}:.75);
}
\draw (O1) -- (B1);
\draw (O2) -- (B2);
\draw (O2) -- (B3);
\draw (O1) -- (B0);

\foreach \x[evaluate={\y=\x+1}] in {0,...,7}
{
\draw[line width=1pt, red] (A\x) -- (A\y);
}
\foreach \x[evaluate={\y=2*\x+1}, evaluate={\t=2*\x + 2}] in {0, ..., 3}
{
\draw (A\t) -- (B\x) -- (A\y);
}
\foreach \j in {1, ..., 8}
{
\node at ({45*\j-180}:1.375) {$\j$};
}
\end{scope}

\end{tikzpicture}

\caption{The smoothing of the $4$-trapezohedron $\girobipy{Y}{4}$
in Figure~\ref{FBP3y4}
is the $5$-gyrobipentaprysm $\girobipy{P}{5}$. We show the North and South parts of $\girobipy{Y}{4}$ and one smoothing.}
\label{fBP4NSd}
\end{center}
\end{figure}

 All the possible combinations of a smoothing of the North part with a smoothing of the South part  give the same polyhedron, the \emph{Gyrobipentaprism} $\girobipy{P}{5}$ , that is the result of
pasting together two pentagonal prism along a lateral face by a $\frac{\pi}{2}$-turn, see Figure~\ref{F4(8)2}.

Theorem \ref{tzdegyrobipiramoide} gives a Heegaard splitting of the manifold  $Z(sgb\mathcal{Y}_{4})$ of genus $b_8=129$. This manifold $Z(sgb\mathcal{Y}_{4})$, being equal to $Z(gbP_5)$,  was already studied in \cite[Section 5.6]{ALdeML2025} where a complete description  was given as a Waldhausen  graph manifold. Note that another Heegaard splitting for graph manifolds was described in~\cite{aim:19}.

 \begin{figure}[ht]
\begin{center}
\begin{tikzpicture}

\begin{scope}[]
\coordinate (S) at (3, 0);
\coordinate (A) at (0, 3);
\coordinate (B) at (3, 3);
\coordinate (C) at (0, 0);
\coordinate (N) at ($1/3*(S) + 2/3*(A)$);
\coordinate (P35) at ($1/2*(S) + 1/2*(A)$);
\coordinate (P12) at ($2/3*(B) + 1/3*(C)$);
\coordinate (P67) at ($1/3*(B) + 2/3*(C)$);
\coordinate (P45) at ($2/3*(S) + 1/3*(A) + 1/15*(B)$);
\coordinate (P34) at ($2/3*(S) + 1/3*(A) - 1/15*(B)$);

\coordinate (Q1) at ($1/3*(B) + 2/3*(A)$);
\coordinate (Q2) at ($1/3*(B) + 2/3*(P12)$);
\coordinate (Q3) at ($1/3*(P12) + 2/3*(P45)$);
\coordinate (Q4) at ($3/4*(P45) + 1/4*(P35)$);
\coordinate (Q5) at ($3/4*(P34) + 1/4*(P35)$);
\coordinate (Q6) at ($1/3*(P67) + 2/3*(P34)$);
\coordinate (Q7) at ($1/3*(C) + 2/3*(P67)$);
\coordinate (Q8) at ($1/3*(C) + 2/3*(A)$);

\draw (S) node[below right] {$\scriptstyle S$} rectangle (A);

\draw (A) -- (N) node[above right=-3pt] {$\scriptstyle N$} -- (P35)
(B) -- (P12) -- (N) -- (P67) -- (C) (S) -- (P45) -- (P12)
(S) -- (P34) -- (P67) (P34) -- (P35) -- (P45);

\draw[line width=1pt, blue]
(Q1) -- node[below, blue, black, pos=.35] {$\scriptstyle 1$}
(Q2) -- node[right, blue, black, pos=.75] {$\scriptstyle 2$}
(Q3) -- node[above, blue, black, pos=1.2] {$\scriptstyle 3$}
(Q4) -- node[below, blue, black, pos=.15] {$\scriptstyle 4$}
(Q5) -- node[above, blue, black, pos=1.2] {$\scriptstyle 5$}
(Q6) -- node[below, blue, black, pos=.25] {$\scriptstyle 6$}
(Q7) --  node[right, blue, black, pos=.6] {$\scriptstyle 7$}
(Q8);

\draw[line width=1pt, blue, dotted]
(Q8) -- node[right, blue, black, pos=.15] {$\scriptstyle 8$}
(Q1);
\end{scope}

\begin{scope}[xshift=4cm]
\coordinate (S) at (3, 0);
\coordinate (A) at (0, 3);
\coordinate (B) at (3, 3);
\coordinate (C) at (0, 0);
\coordinate (S1) at ($3/4*(S) + 1/4*(C)$);
\coordinate (S2) at ($3/4*(S) + 1/4*(B)$);
\coordinate (N) at ($1/3*(S) + 2/3*(A)$);
\coordinate (N1) at ($(N)+(-.25,.25)$);
\coordinate (N2) at ($(N) + (0,-.5)$);
\coordinate (P35) at ($1/2*(S) + 1/2*(A)$);
\coordinate (P12) at ($2/3*(B) + 1/3*(C)$);
\coordinate (P67) at ($1/3*(B) + 2/3*(C)$);
\coordinate (P45) at ($2/3*(S) + 1/3*(A) + 1/15*(B)$);
\coordinate (P34) at ($2/3*(S) + 1/3*(A) - 1/15*(B)$);

\coordinate (Q1) at ($1/3*(B) + 2/3*(A)$);
\coordinate (Q2) at ($1/3*(B) + 2/3*(P12)$);
\coordinate (Q3) at ($1/3*(P12) + 2/3*(P45)$);
\coordinate (Q4) at ($3/4*(P45) + 1/4*(P35)$);
\coordinate (Q5) at ($3/4*(P34) + 1/4*(P35)$);
\coordinate (Q6) at ($1/3*(P67) + 2/3*(P34)$);
\coordinate (Q7) at ($1/3*(C) + 2/3*(P67)$);
\coordinate (Q8) at ($1/3*(C) + 2/3*(A)$);

\node[] at (S) {$\scriptstyle S$};
\node[] at (N) {$\scriptstyle N$};
\draw (S2) -- (B) -- (A) -- (C) -- (S1);
\draw[line width=1pt, red] (S1) -- (S2);

\draw (A) -- (N1)
(B) -- (P12) -- (N1) (P35) -- (N2) -- (P67) -- (C) (S2) -- (P45) -- (P12)
(S1) -- (P34) -- (P67) (P34) -- (P35) -- (P45);
\draw[line width=1pt, red] (N1) -- (N2);

\draw[line width=1pt, blue]
(Q1) -- node[below, blue, black, pos=.35] {$\scriptstyle 1$}
(Q2) -- node[right, blue, black, pos=.75] {$\scriptstyle 2$}
(Q3) -- node[above, blue, black, pos=1.2] {$\scriptstyle 3$}
(Q4) -- node[below, blue, black, pos=.15] {$\scriptstyle 4$}
(Q5) -- node[above, blue, black, pos=1.2] {$\scriptstyle 5$}
(Q6) -- node[below, blue, black, pos=.25] {$\scriptstyle 6$}
(Q7) --  node[right, blue, black, pos=.6] {$\scriptstyle 7$}
(Q8);

\draw[line width=1pt, blue, dotted]
(Q8) -- node[right, blue, black, pos=.15] {$\scriptstyle 8$}
(Q1);
\end{scope}

\begin{scope}[xshift=9cm, yshift=1.5cm]
\foreach \x in {0, ..., 5}
{
\coordinate (A\x) at ({72*\x-54}:.75);
\coordinate (B\x) at ({72*\x-54}:1.5);
}
\coordinate (C0) at ($.5*(A0) + .5*(B0) - .1*(1,0)$);
\coordinate (C4) at ($.5*(A4) + .5*(B4) + .1*(1,0)$);

\coordinate (Q1) at ($.5*(B2)+.5*(B3)$);
\coordinate (Q2) at ($.5*(B2)+.5*(A2)$);
\coordinate (Q3) at ($.3*(A2)+.7*(A1)$);
\coordinate (Q4) at ($.5*(A1)+.5*(A0)$);
\coordinate (Q5) at ($.5*(A0)+.5*(C0)$);
\coordinate (Q6) at ($.7*(C0)+.3*(C4)$);
\coordinate (Q7) at ($.5*(C4)+.5*(B4)$);
\coordinate (Q8) at ($.5*(B4)+.5*(B3)$);
\foreach \x[evaluate={\y=\x+1}] in {0, ..., 4}
{
\draw (A\x) -- (A\y)  (B\x) -- (B\y);
}
\foreach \x in {1, 2, 3}
{
\draw (A\x) -- (B\x);
}
\draw (A0) -- (C0) -- (B0)
(A4) -- (C4) -- (B4)
(C0) -- (C4);

\draw[red, line width=1pt]
(B0) --  node[right, black] {$\scriptstyle S$} (B1)
(A3) --  node[right, black] {$\scriptstyle N$} (A4);

\draw[line width=1pt, blue] (Q1) -- node[below, black] {$\scriptstyle 1$}
(Q2) -- node[right, black] {$\scriptstyle 2$}
(Q3) -- node[left, black] {$\scriptstyle 3$}
(Q4) -- node[right, black] {$\scriptstyle 4$}
(Q5) -- node[above left=-5pt, black] {$\scriptstyle 5$}
(Q6) -- node[below, black] {$\scriptstyle 6$}
(Q7) -- node[right, black] {$\scriptstyle 7$} (Q8);
\draw[line width=1pt, blue, dotted]
(Q8) -- node[left, black] {$\scriptstyle 8$} (Q1);
\end{scope}
\end{tikzpicture}

\caption{The smoothing of $gb\mathcal{Y}_{4}$ is the \emph{Gyrobipentaprism} $\girobipy{P}{5} $.  }\label{F4(8)2}
\end{center}
\end{figure}

\end{ejm}
 \section{General case} \label{sec:general}

Let $Q$ be a polyhedron whith $r$ faces associated to an intersection of ellipsoids $Z(Q)$ such that a vertex $v$ of $Q$ has valence $n\geq 4$.
\begin{equation}\label{piZQ}
 \pi : Z(Q) \longrightarrow Q
\end{equation}
The vertex $v$ corresponds to $2^{r-n}$ singular points in $Z(Q)$. A neighborhood $U_v$ of $v$ in $Q$ is a $n$-pyramid.  Then  $\pi ^{-1} \overline{(Q-U_v)} $ has a boundary composed by $2^{r-n}$ surfaces $(F_{b_n})_i$, $i=1,\dots ,2^{r-n})$, where $(F_{b_n})_i$ is a  surface of genus $b_n$. There are $N_n$ possible ways to obtain a smoothing of $Q$, corresponding to the number of $n$-pyramitoids, and each one of them is determined by code (or the core tree, or the Coxeter graph, or the triangulation of the $n$-polygon).  Those  codes determine  the different ways of attaching a handlebody $H_{b_n}$ to each boundary component surface $(F_{b_n})_i$.
Therefore there are $N_n$ ways to obtain a smoothing  for $Z(Q)$ in the $2^{r-n}$ singular vertices $\pi ^{-1}(v)$.
%

 \providecommand\noopsort[1]{}
\providecommand{\bysame}{\leavevmode\hbox to3em{\hrulefill}\thinspace}
\providecommand{\MR}{\relax\ifhmode\unskip\space\fi MR }
\providecommand{\MRhref}[2]{%
  \href{http://www.ams.org/mathscinet-getitem?mr=#1}{#2}
}
\providecommand{\href}[2]{#2}

\end{document}